\DeclareSymbolFontAlphabet{\mathbb}{AMSb}
\DeclareSymbolFontAlphabet{\mathbbl}{bbold}
\newcommand{\Prism}{{\mathlarger{\mathbbl{\Delta}}}}
\newtheorem*{rep@theorem}{\rep@title}
\newcommand{\newreptheorem}[2]{%
\newenvironment{rep#1}[1]{%
 \def\rep@title{#2 \ref{##1}}%
 \begin{rep@theorem}}%
 {\end{rep@theorem}}}
\newtheorem{theorem}{Theorem}[section]
\newtheorem{lemma}[theorem]{Lemma}
\newtheorem{proposition}[theorem]{Proposition}
\newtheorem{corollary}[theorem]{Corollary}
\newtheorem{construction}[theorem]{Construction}
\newtheorem{hypothesis}[theorem]{Hypothesis}
\newtheorem{notation}[theorem]{Notation}
\theoremstyle{remark}\newtheorem{remark}[theorem]{Remark}
\newtheorem{example}[theorem]{Example}
\crefname{construction}{construction}{constructions}
\theoremstyle{definition}\newtheorem{definition}[theorem]{Definition}
\theoremstyle{definition}
\numberwithin{equation}{theorem}
\newcommand{\Spf}{\mathrm{Spf}}
\newcommand{\FF}{\mathbb{F}}
\newcommand{\ZZ}{\mathbb{Z}}
\newcommand{\QQ}{\mathbb{Q}}
\newcommand{\CC}{\mathbb{C}}
\newcommand{\fA}{\mathbf{A}}
\newcommand{\ELn}{\mathcal{E}_{\Prism,\Lambda_n}}
\newcommand{\hEL}{\widehat{\mathcal{E}}_{\Prism,\Lambda}}
\newcommand{\fX}{\mathfrak{X}}
\author{Marvin Schneider}
\title{A note on étale $(\varphi,\Gamma)$-modules in families}
\date{\vspace{-5ex}}
\date{\today} 
\setlist[enumerate]{itemsep=1pt,parsep=1pt,before={\parskip=1pt}}
\begin{document}
\maketitle

\begin{abstract}
Let $\Lambda$ be a complete noetherian local ring with finite residue field of characteristic $p$ and $K/\mathbb{Q}_p$ a $p$-adic field.
 We show that, by deformation of the structure sheaf on the (transversal) prismatic site of a bounded $p$-adic formal scheme $\mathfrak{X}$, 
 the category of prismatic $(\Lambda,F)$-crystals on $\mathfrak{X}$ is equivalent to $\Lambda$-\'etale local systems on the generic adic fiber 
 of $\mathfrak{X}$ and that the cohomology of $(\Lambda,F)$-crystals recovers the pro-\'etale cohomology of the corresponding local systems. 
 The proof follows the strategy used in \cite{bhatt2023prismatic} and \cite{marks2023prismatic}. From this we construct an isomorphism between
Iwasawa cohomology of a $p$-adic Lie extension of $K$ and prismatic cohomology. Following \cite{wu2021galois}, we then reprove Dee's classical 
result \cite{article} on the equivalence between families of Galois representations and \'etale $(\varphi,\Gamma)$-modules.
\end{abstract}
{
  \hypersetup{linkcolor=BrickRed, linktoc=page}
  \tableofcontents
}
\clearpage

\reversemarginpar
\section{Prismatic \texorpdfstring{$(\Lambda,F)$}{(Λ, F)}-crystals}\label{main-section1}
Let $p$ be a prime. In this section we construct the main objects of interrest for this note, which are Frobenius fixed vector bundles on the ringed site $(\fX_{\Prism^\circ},\hEL)$ (cf. Definition \ref{definition-(lambda,f)-crystals}).
\begin{hypothesis}\label{hypothesis-1}
    Throughout §\ref{main-section1} we fix an inverse system $(\Lambda_n)_{n\ge 1}$ of $p$-nilpotent $\ZZ_p$-algebras, such that for each $n\ge 1$
    \begin{enumerate}[label=(\roman*)]
        \item $\Lambda_n$ is finite over $\ZZ_p$,
        \item the transition maps $\Lambda_{n+1}\to \Lambda_n$ are surjective with (locally) nilpotent kernel.
    \end{enumerate}
    We regard each $\Lambda_n$ as a topological $\ZZ_p$-algebra with the discrete topology and define $\Lambda:=\varprojlim_n \Lambda_n$,
    which we regard as a topological $\ZZ_p$-algebra carrying the inverse limit topology.
\end{hypothesis}
Let $\fX$ be a bounded\footnote{Following \cite{bhatt2023prismatic}, we call a $p$-adic formal scheme $\fX$ \emph{bounded}
    if the structure sheaf $\mathcal{O}_{\fX}$ has bounded $p$-torsion; that is if $\fX$ is Zariski locally of the form $\Spf(A)$
    with $A[p^\infty]$ having bounded exponent.} $p$-adic formal scheme and let $\fX_{\Prism^\circ}$ denote the subcategory of the
absolute prismatic site $\fX_\Prism$ of $\fX$, consisting only of transversal prisms. Recall that a bounded prism $(A,I)$ is said
to be \emph{transversal} if $A/I$ is $p$-torsion free. This automatically implies that $A/I^n$ for $n\ge 1$ and $A$ are $p$-torsion
free as well (cf. \cite[Remark 2.1.7.]{bhatt2022absolute}).
\begin{lemma}[{\cite[Lemma 2.2]{du2022new}}]
    $\fX_{\Prism^\circ}$ with the faithfully flat topology is a site.
\end{lemma}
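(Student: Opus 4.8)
The plan is to verify the two defining axioms of a site (in the sense of a pretopology / coverage) for the category $\fX_{\Prism^\circ}$ equipped with the collection of faithfully flat covers, namely stability under pullback along arbitrary morphisms and closure under composition, together with the sheaf-theoretic condition that isomorphisms are covers. The essential input is that the analogous statement is already known for the full prismatic site $\fX_\Prism$ (this is standard, e.g.\ from \cite{bhatt2022absolute}), so the content of the lemma is entirely about checking that the subcategory of \emph{transversal} prisms is closed under the relevant constructions.

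First I would recall that a faithfully flat cover in $\fX_\Prism$ is a map $(A,I)\to (B,IB)$ of bounded prisms over $\fX$ such that $A\to B$ is $(p,I)$-completely faithfully flat. For the pullback axiom, given such a cover and an arbitrary map $(A,I)\to (C,IC)$ in $\fX_{\Prism^\circ}$, one forms the pushout $(D,ID)$ with $D = B\widehat{\otimes}_A C$ (derived $(p,I)$-completed tensor product), which exists in $\fX_\Prism$ and is again a cover of $(C,IC)$; the key point to check is that $(D,ID)$ is again transversal, i.e.\ that $D/ID$ is $p$-torsion free. Here I would use that $C/IC$ is $p$-torsion free by hypothesis and that $B$ is $(p,I)$-completely flat over $A$, so that $D/ID = (B/IB)\widehat{\otimes}_{A/I}(C/IC)$ is $(p)$-completely flat over the $p$-torsion-free ring $C/IC$; completed flatness over a $p$-torsion-free base preserves $p$-torsion-freeness (one can see this via the derived interpretation: $D/ID$ has $p$-complete Tor-amplitude $0$ over $C/IC$, hence is discrete and $p$-torsion free). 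For closure under composition, if $(A,I)\to(B,IB)\to(C,IC)$ are both faithfully flat covers with all three prisms transversal, then the composite $A\to C$ is $(p,I)$-completely faithfully flat since a composite of completely faithfully flat maps is completely faithfully flat, and transversality of the endpoints is given; the isomorphism axiom is trivial since an isomorphism of prisms is visibly a faithfully flat cover and is transversal if its source (equivalently target) is.

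The main obstacle I anticipate is the transversality check in the pullback step: one must ensure that the pushout of two transversal prisms along a transversal prism, computed via derived $(p,I)$-complete tensor products, remains transversal, and this requires being slightly careful with the interaction between derived completion and discreteness. The clean way is to argue that $D = B\widehat{\otimes}_A C$ has, after base change mod $I$, the form of a $(p)$-completely flat algebra over $C/I$, invoke \cite[Lemma something]{bhatt2023prismatic}-style results that $(p)$-completely flat algebras over a $p$-torsion-free $p$-complete ring are concentrated in degree $0$ and $p$-torsion free, and then separately note $D$ itself is $(p,I)$-completely flat over the transversal prism $A$, hence $D$ and $D/I^nD$ are $p$-torsion free by the cited \cite[Remark 2.1.7]{bhatt2022absolute}. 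Granting that, the verification of the pretopology axioms is formal, and one concludes as in \cite[Lemma 2.2]{du2022new} that $(\fX_{\Prism^\circ}, \mathrm{fpqc})$ is a site.
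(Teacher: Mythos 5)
Your argument is correct and is essentially the intended one: the paper itself offers no proof, deferring entirely to the cited Lemma 2.2 of Du--Liu, whose proof is exactly this check that the pretopology axioms inherited from $\fX_\Prism$ survive on $\fX_{\Prism^\circ}$, the only genuine content being that the $(p,I)$-completed pushout of a cover along a map of transversal prisms is again transversal because a $p$-completely flat algebra over the $p$-torsion-free ring $C/IC$ is discrete and $p$-torsion free. Your handling of that key point (and of composition and isomorphisms) matches the cited proof, so there is nothing to add.
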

\begin{construction}\label{construction-of-presheaves}
    For a bounded prism $(A,I)$ define
    \[
        \ELn(A,I):=(A\otimes_{\ZZ_p}\Lambda_n)^\wedge_I[\tfrac{1}{I}],
    \]
    where $(-)^\wedge_{I}$ denotes classical $I$-adic completion. The association $(A,I) \mapsto \ELn(A,I)$ defines a presheaf
    $\ELn$ on $\mathfrak{X}_\Prism$. Since $\Lambda_n$ is $p$-nilpotent and $\varphi(I)\equiv I^p\mod p$ for any prism $(A,I)$,
    the $\delta$-ring structure on prisms induces an endomorphism $\varphi_\Prism: \mathcal{E}_{\Prism,\Lambda_n}\to \mathcal{E}_{\Prism,\Lambda_n}$.
    Taking limits we define another presheaf on $\mathfrak{X}_\Prism$ by
    \[
        \hEL:=\varprojlim_n  \ELn,
    \]
    which comes with an endomorphism $\varphi_\Prism:\hEL\to \hEL$.
\end{construction}
In what follows, we will show that $\widehat{\mathcal{E}}_{\Prism,\Lambda}$ is in fact a sheaf
(at least after restriction to the site $\fX_{\Prism^\circ}$).
\begin{lemma}[{\cite[Lemma 2.2.]{steingart2022iwasawa}}]\label{flat-inverse-system-commute-with-base-change-fp}
    Let $R$ be a commutative ring and $(A_n)_{n\ge 1}$ an inverse system of flat $R$-modules with surjective transition maps.
    For any finitely presented $R$-module $N$, we have
    \[
        (\varprojlim_n A_n)\otimes_R N\cong \varprojlim_n (A_n\otimes_R N).
    \]
\end{lemma}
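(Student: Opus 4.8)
The plan is to reduce to the trivial case of finite free modules by choosing a finite presentation of $N$ and using flatness of the $A_n$ together with a Mittag--Leffler argument to kill the relevant $\varprojlim^1$-terms. Concretely, I would fix an exact sequence $R^{\oplus s}\xrightarrow{f}R^{\oplus t}\to N\to 0$ and put $Z:=\ker f$. Since each $A_n$ is flat over $R$, applying $A_n\otimes_R-$ to $0\to Z\to R^{\oplus s}\xrightarrow{f}R^{\oplus t}$ keeps it exact, so $\ker(A_n^{\oplus s}\to A_n^{\oplus t})=A_n\otimes_R Z$ canonically; right-exactness of $A_n\otimes_R-$ moreover gives $A_n\otimes_R N=\operatorname{coker}(A_n^{\oplus s}\to A_n^{\oplus t})$. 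Writing $Q_n:=\operatorname{im}(A_n^{\oplus s}\to A_n^{\oplus t})$, we obtain for every $n$ two short exact sequences
\[
0\to A_n\otimes_R Z\to A_n^{\oplus s}\to Q_n\to 0,\qquad\qquad 0\to Q_n\to A_n^{\oplus t}\to A_n\otimes_R N\to 0.
\]

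Next I would observe that the inverse systems $(A_n^{\oplus s})_n$, $(A_n\otimes_R Z)_n$ and $(Q_n)_n$ all have surjective transition maps: for $A_n^{\oplus s}$ this is the hypothesis applied in each coordinate; for $A_n\otimes_R Z$ it follows, by right-exactness of $-\otimes_R Z$, from the surjections $A_{n+1}\twoheadrightarrow A_n$; and $Q_n$ is a quotient of $A_n^{\oplus s}$ compatibly in $n$. Hence each of these systems is Mittag--Leffler, so in particular has vanishing $\varprojlim^1$.

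Then I would run the two six-term exact sequences for $\varprojlim$. From the first short exact sequence, $\varprojlim^1_n(A_n\otimes_R Z)=0$ shows that $\varprojlim_n(A_n^{\oplus s})\to\varprojlim_n Q_n$ is surjective. From the second, $\varprojlim^1_n Q_n=0$ yields a short exact sequence $0\to\varprojlim_n Q_n\to\varprojlim_n(A_n^{\oplus t})\to\varprojlim_n(A_n\otimes_R N)\to 0$, so $\varprojlim_n(A_n\otimes_R N)=\operatorname{coker}\bigl(\varprojlim_n Q_n\to\varprojlim_n(A_n^{\oplus t})\bigr)$; combining this with the surjectivity just noted, the cokernel is unchanged if we replace $\varprojlim_n Q_n$ by $\varprojlim_n(A_n^{\oplus s})$. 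Finally $\varprojlim$ commutes with finite direct sums, so $\varprojlim_n(A_n^{\oplus s})=(\varprojlim_n A_n)^{\oplus s}$ and likewise for $t$, and right-exactness of $(\varprojlim_n A_n)\otimes_R-$ on the presentation of $N$ identifies $\operatorname{coker}\bigl((\varprojlim_n A_n)^{\oplus s}\to(\varprojlim_n A_n)^{\oplus t}\bigr)$ with $(\varprojlim_n A_n)\otimes_R N$. Chasing the identifications shows the resulting isomorphism is the obvious comparison map and is independent of the chosen presentation.

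The only point needing genuine care is the $\varprojlim^1$ bookkeeping, and the observation that makes it go through is that flatness of the $A_n$ lets us identify $\ker(A_n^{\oplus s}\to A_n^{\oplus t})$ with $A_n\otimes_R Z$ — an inverse system which, being a right-exact image of the surjective system $(A_n)_n$, automatically has surjective transition maps. Everything else is formal.
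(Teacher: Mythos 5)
Your argument is correct, and it is essentially the standard proof of this statement (the paper itself only cites \cite[Lemma 2.2]{steingart2022iwasawa}, whose argument is the same reduction to a finite presentation plus Mittag--Leffler): flatness identifies the kernels with $A_n\otimes_R Z$, surjectivity of the transition maps kills the relevant $\varprojlim^1$-terms, and the comparison map is then an isomorphism by right-exactness of tensor and left-exactness of $\varprojlim$. No gaps.
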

\begin{lemma}\label{covering-I-completely-flat}
    For a covering of transversal prisms $(A,I)\to (B,IB)$ the induced map
    \[
        (A\otimes_{\ZZ_p}\Lambda_n)^\wedge_I \to (B\otimes_{\ZZ_p}\Lambda_n)^\wedge_I
    \]
    is $I$-completely faithfully flat.
\end{lemma}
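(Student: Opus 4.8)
The plan is to strip off the $I$-adic completion using Lemma~\ref{flat-inverse-system-commute-with-base-change-fp}, and then to recognise the remaining map as a base change of the given covering $A\to B$. Since $I=(d)$ is principal, right-exactness of $-\otimes_{\ZZ_p}\Lambda_n$ gives $(A\otimes_{\ZZ_p}\Lambda_n)/d^k\cong (A/I^k)\otimes_{\ZZ_p}\Lambda_n$ for every $k\ge 1$. As $(A,I)$ is transversal, each $A/I^k$ is $p$-torsion free, hence flat over the valuation ring $\ZZ_p$, and the transition maps $A/I^{k+1}\to A/I^k$ are surjective; since $\Lambda_n$ is finite — in particular finitely presented — over $\ZZ_p$, Lemma~\ref{flat-inverse-system-commute-with-base-change-fp} yields
\[
    (A\otimes_{\ZZ_p}\Lambda_n)^\wedge_I=\varprojlim_k\bigl((A/I^k)\otimes_{\ZZ_p}\Lambda_n\bigr)\cong\Bigl(\varprojlim_k A/I^k\Bigr)\otimes_{\ZZ_p}\Lambda_n=A^\wedge_I\otimes_{\ZZ_p}\Lambda_n .
\]
A transversal prism is moreover classically $I$-adically complete (being derived $(p,I)$-complete with each $A/I^k$ classically $p$-complete, cf.\ \cite[\S2.1]{bhatt2022absolute}), so $A^\wedge_I=A$ and therefore $(A\otimes_{\ZZ_p}\Lambda_n)^\wedge_I=A\otimes_{\ZZ_p}\Lambda_n$; the same applies to $(B,IB)$. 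Hence the map in the statement is just the uncompleted map $A\otimes_{\ZZ_p}\Lambda_n\to B\otimes_{\ZZ_p}\Lambda_n$.

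This map is the base change of $A\to B$ along $A\to A\otimes_{\ZZ_p}\Lambda_n$. By definition a covering of prisms means that $A\to B$ is $(p,I)$-completely faithfully flat, and $(p,I)$-complete faithful flatness is preserved under arbitrary base change, so $A\otimes_{\ZZ_p}\Lambda_n\to B\otimes_{\ZZ_p}\Lambda_n$ is $(p,I)$-completely faithfully flat with respect to the ideal $(p,I)$. Finally, $\Lambda_n$ is $p$-nilpotent, so $p$ is nilpotent in $A\otimes_{\ZZ_p}\Lambda_n$ and in $B\otimes_{\ZZ_p}\Lambda_n$; consequently the $(p,I)$-adic and $I$-adic topologies on these rings agree, and ``$(p,I)$-completely faithfully flat'' coincides with ``$I$-completely faithfully flat''. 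This is the assertion.

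Most of this is formal; the one step that needs genuine care is the elimination of the completion in the first paragraph, where transversality enters twice — to make $A/I^k$ flat over $\ZZ_p$ so that Lemma~\ref{flat-inverse-system-commute-with-base-change-fp} applies, and to ensure $A$ is classically $I$-adically complete — while $p$-nilpotence of $\Lambda_n$ is exactly what allows the final passage from $(p,I)$- to $I$-completeness. (If one preferred not to verify $A^\wedge_I=A$ one could instead keep the completion throughout and invoke the standard stability of $I$-complete flatness under $I$-adic completion; note also that transversality guarantees $d$ is a non-zerodivisor on $A\otimes_{\ZZ_p}\Lambda_n$ and $B\otimes_{\ZZ_p}\Lambda_n$, since as $A$- resp.\ $B$-modules these are direct sums of the $p$-torsion free quotients $A/p^m$ resp.\ $B/p^m$, so that no higher Tor obstructs reducing modulo $d$.)
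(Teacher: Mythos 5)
Your first step (removing the completion via Lemma~\ref{flat-inverse-system-commute-with-base-change-fp}, transversality and classical $I$-completeness, so that the map becomes $A\otimes_{\ZZ_p}\Lambda_n\to B\otimes_{\ZZ_p}\Lambda_n$) is exactly the paper's first step. Where you diverge is the second half: the paper verifies $I$-complete faithful flatness by hand, computing $(B\otimes_{\ZZ_p}\Lambda_n)\otimes^{\mathbb{L}}_{A\otimes_{\ZZ_p}\Lambda_n}(A/I\otimes_{\ZZ_p}\Lambda_n)\cong (B/p^n\otimes^{\mathbb{L}}_{A/p^n}A/(p^n,I))\otimes^{\mathbb{L}}_{\ZZ/p^n}\Lambda_n$ and using transversality of $B$ to see the last tensor is discrete, whereas you outsource this to ``$(p,I)$-complete faithful flatness is stable under arbitrary base change'' plus the observation that $p$-nilpotence of $\Lambda_n$ makes the $(p,I)$-adic and $I$-adic notions agree. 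This is correct in substance, but two points deserve to be said explicitly. First, the base-change stability you invoke is a statement about \emph{derived} base change; to apply it to the plain tensor product $B\otimes_{\ZZ_p}\Lambda_n$ you must note that $B\otimes_A^{\mathbb{L}}(A\otimes_{\ZZ_p}\Lambda_n)\simeq B\otimes_{\ZZ_p}^{\mathbb{L}}\Lambda_n$ is already discrete because $B$ is $p$-torsion free (transversality of $(B,IB)$) — this is precisely the role transversality of $B$ plays in the paper's explicit computation, and it is not covered by your closing parenthetical, which moreover misstates the point: $A/p^m$ is of course not $p$-torsion free; what transversality gives is that $d$ is a non-zerodivisor on $A/p^m$ (and the Tor-vanishing actually needed here is over $\ZZ_p$, not modulo $d$). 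Second, the passage from ``$(p,I)$-completely faithfully flat'' to ``$I$-completely faithfully flat'' when $p$ is nilpotent is true but is itself a small dévissage (complete flatness only depends on the adic topology of a finitely generated ideal), which you assert without proof or reference; the paper sidesteps it by reducing modulo $I$ directly. With those two sentences added your argument is a clean, slightly more conceptual repackaging of the paper's proof; what the paper's explicit computation buys is that both subtleties are checked in the course of the calculation rather than quoted.
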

\begin{proof}
    Any bounded prism $(A,I)$ is classical $(p,I)$-complete, hence classical $I$-complete. Therefore, by
    Lemma \ref{flat-inverse-system-commute-with-base-change-fp}, we have $(A\otimes_{\ZZ_p}\Lambda_n)^\wedge_I=A\otimes_{\ZZ_p}\Lambda_n$
    for any transversal prism $(A,I)$. Moreover, by flatness of $A$ over $\ZZ_p$, $A\otimes_{\ZZ_p}^{\mathbb{L}}\Lambda_n\cong A\otimes_{\ZZ_p}\Lambda_n$.
    To prove the lemma we have to show that $(B\otimes_{\ZZ_p}\Lambda_n)\otimes^{\mathbb{L}}_{A\otimes_{\ZZ_p}\Lambda_n}(A/I\otimes_{\ZZ_p}\Lambda_n)$
    is discrete and faithfully flat as an $A/I\otimes_{\ZZ_p}\Lambda_n$-module. By $p$-nilpotentness of $\Lambda_n$ we may assume that
    $\Lambda_n$ is a $\ZZ/p^n$-algebra. We then have
    \begin{align*}
        (B/p^n\otimes_{\ZZ/p^n}\Lambda_n)\otimes^{\mathbb{L}}_{A\otimes_{\ZZ/p^n}\Lambda_n}(A/(p^n,I)\otimes_{\ZZ/p^n}\Lambda_n) 
                                                                                                                                 & \cong (B/p^n\otimes^{\mathbb{L}}_A A/(p^n,I))\otimes^\mathbb{L}_{\ZZ/p^n}\Lambda_n.
    \end{align*}
    Here we made multiple uses of \cite[\href{https://stacks.math.columbia.edu/tag/08YU}{Tag 08YU}]{stacks-project}.  
    Since $(A,I)\to (B,IB)$ is $(p,I)$-completely faithfully flat, $A/p^n \to B/p^n$ is  $I$-completely faithfully flat, whence
    \[
        B/p^n\otimes^{\mathbb{L}}_{A/p^n} A/(p^n,I)= B/p^n\otimes_A A/(p^n,I)=B/(p^n,I)
    \]
    is discrete and faithfully flat as an $A/(p^n,I)$-module. As $(B,IB)$ is transversal prism by assumption, $B/(p^n,I)\otimes^\mathbb{L}_{\ZZ/p^n}\Lambda_n= B/(p^n,I)\otimes_{\ZZ/p^n}\Lambda_n$ is concentrated in degree 0 where it is
    faithfully flat as an $A/(p^n,I)\otimes_{\ZZ/p^n}\Lambda_n$-module, since faithful flatness is stable under arbitrary base change.
\end{proof}
\begin{lemma}\label{presheaf-compatible-with-fiber-product}
    Let $(A,I)\to (B,IB)$ be a covering of transversal prisms. Then
    \begin{enumerate}[label=(\roman*)]
        \item $((B\otimes_{\ZZ_p}\Lambda_n)^\wedge_{I}  \otimes_{(A\otimes_{\ZZ_p}\Lambda_n)^\wedge_{I}}(B\otimes_{\ZZ_p}\Lambda_n)^\wedge_{I})^\wedge_I \cong ((B\otimes_A B)^\wedge_{(p,I)}\otimes_{\ZZ_p}\Lambda_n)^\wedge_I$.
        \item the derived $I$-completion of $(B\otimes_{\ZZ_p}\Lambda_n)^\wedge_{I}  \otimes^{\mathbb{L}}_{(A\otimes_{\ZZ_p}\Lambda_n)^\wedge_{I}}(B\otimes_{\ZZ_p}\Lambda_n)^\wedge_{I}$
              is concentrated in degree $0$ where it coincides with $((B\otimes_A B)^\wedge_{(p,I)}\otimes_{\ZZ_p}\Lambda_n)^\wedge_I$.
    \end{enumerate}
\end{lemma}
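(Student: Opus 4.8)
The plan is to discard the completions that are invisible on transversal prisms, reduce both sides to the plain tensor square $B\otimes_A B$ over $\ZZ_p$, and then re-complete. Two preliminary observations will be used throughout. First, exactly as in the proof of Lemma~\ref{covering-I-completely-flat}, transversality of $(A,I)$ and $(B,IB)$ gives $(A\otimes_{\ZZ_p}\Lambda_n)^\wedge_I=A\otimes_{\ZZ_p}\Lambda_n$ and $(B\otimes_{\ZZ_p}\Lambda_n)^\wedge_I=B\otimes_{\ZZ_p}\Lambda_n$, and $A$, $B$ are $\ZZ_p$-flat. Second, the self-product $C:=(B\otimes_A B)^\wedge_{(p,I)}$, together with the ideal $IC$, is again a transversal prism: its reduction modulo $I$ may be identified with the classical $p$-completion of $(B/I)\otimes_{A/I}(B/I)$, which is $p$-torsion free since $B/I$ is flat over the $p$-torsion free ring $A/I$ (flatness of $B/I$ over $A/I$ by transversality, as in the proof of Lemma~\ref{covering-I-completely-flat}). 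Hence $C$ is $p$-torsion free — in particular $\ZZ_p$-flat — classically $I$-complete, and $C/I^kC$ is $p$-torsion free for every $k\ge 1$. Finally, fix $N$ with $p^N\Lambda_n=0$; then $\Lambda_n$ is finitely presented over $\ZZ/p^N$ and is perfect as a complex over $\ZZ_p$.

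For (i): a base change computation identifies $(B\otimes_{\ZZ_p}\Lambda_n)\otimes_{A\otimes_{\ZZ_p}\Lambda_n}(B\otimes_{\ZZ_p}\Lambda_n)$ with $(B\otimes_A B)\otimes_{\ZZ_p}\Lambda_n$, so the left-hand side of (i) equals $\big((B\otimes_A B)\otimes_{\ZZ_p}\Lambda_n\big)^\wedge_I$. Writing $M=B\otimes_A B$, I would first check that $C/p^N\cong (M/p^N)^\wedge_I$, both being $\varprojlim_k M/(p^N,I^k)$; this uses that $C$ is classically $I$-complete while $C/p^N$ is a finite $C$-module — hence classically $I$-complete, as $I$ is finitely generated — together with the identity $C/(p^N,I^k)=M/(p^N,I^k)$. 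Next, $(C/(p^N,I^k))_k$ is an inverse system of flat $\ZZ/p^N$-modules with surjective transition maps (flatness because $C/I^kC$ is $p$-torsion free), so Lemma~\ref{flat-inverse-system-commute-with-base-change-fp}, applied to the finitely presented module $\Lambda_n$, yields $(M/p^N)^\wedge_I\otimes_{\ZZ/p^N}\Lambda_n\cong \big(M/p^N\otimes_{\ZZ/p^N}\Lambda_n\big)^\wedge_I$. Chaining these isomorphisms, the left-hand side of (i) becomes $(M/p^N)^\wedge_I\otimes_{\ZZ/p^N}\Lambda_n=(C/p^N)\otimes_{\ZZ/p^N}\Lambda_n=C\otimes_{\ZZ_p}\Lambda_n$, which is visibly $I$-complete and hence also equals $(C\otimes_{\ZZ_p}\Lambda_n)^\wedge_I$ — the right-hand side of (i).

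For (ii): the same reductions, now using $\ZZ_p$-flatness of $A$ and $B$ to replace the underived tensor products and the algebra $A\otimes_{\ZZ_p}\Lambda_n$ by their derived analogues, identify the complex to be completed with $(B\otimes^{\mathbb{L}}_A B)\otimes^{\mathbb{L}}_{\ZZ_p}\Lambda_n$. This is a complex of $\Lambda_n$-modules, hence derived $p$-complete, so its derived $I$-completion coincides with its derived $(p,I)$-completion; and since $\Lambda_n$ is a perfect $\ZZ_p$-complex, derived $(p,I)$-completion commutes with $-\otimes^{\mathbb{L}}_{\ZZ_p}\Lambda_n$, giving $(B\otimes^{\mathbb{L}}_A B)^\wedge_{(p,I)}\otimes^{\mathbb{L}}_{\ZZ_p}\Lambda_n$. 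Because $A\to B$ is $(p,I)$-completely flat, $(B\otimes^{\mathbb{L}}_A B)^\wedge_{(p,I)}$ is concentrated in degree $0$ and equals $C$ (the standard fact about \v{C}ech nerves of prism covers, obtained by the sort of $(p,I)$-complete flatness manipulation carried out in the proof of Lemma~\ref{covering-I-completely-flat}); as $C$ is $\ZZ_p$-flat, $C\otimes^{\mathbb{L}}_{\ZZ_p}\Lambda_n=C\otimes_{\ZZ_p}\Lambda_n$ is discrete, and by part (i) it equals $(C\otimes_{\ZZ_p}\Lambda_n)^\wedge_I$, as claimed.

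I expect the real work to lie in the classical-completion bookkeeping over the non-noetherian ring $M=B\otimes_A B$: verifying carefully that $C=(B\otimes_A B)^\wedge_{(p,I)}$ is an honest transversal prism — so that the flat inverse systems feeding Lemma~\ref{flat-inverse-system-commute-with-base-change-fp}, and the $\ZZ_p$-flatness invoked in (ii), are genuinely available — and the identification $C/p^N\cong (M/p^N)^\wedge_I$, which is precisely where the fact that a finite module over a ring complete along a finitely generated ideal is again complete enters. Granting these, part (ii) is comparatively formal, derived completion being exact and compatible with the perfect twist $-\otimes^{\mathbb{L}}_{\ZZ_p}\Lambda_n$.
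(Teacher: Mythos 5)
Your argument is in substance the same as the paper's: both proofs use transversality together with Lemma \ref{flat-inverse-system-commute-with-base-change-fp} to move $\Lambda_n$ across the classical $I$-completions, reducing (i) to the identification of both sides with a completion of $(B\otimes_A B)\otimes_{\ZZ_p}\Lambda_n$, and both deduce (ii) from the fact that the derived $(p,I)$-completion of $B\otimes^{\mathbb{L}}_A B$ is concentrated in degree $0$ and agrees with $C=(B\otimes_A B)^\wedge_{(p,I)}$, which is again a transversal prism (the paper cites the proof of Wu's Proposition 3.2 for the discreteness; you invoke it as the standard \v{C}ech-nerve fact, which is acceptable). The differences are organizational: you keep $C$ and reduce modulo a single $p^N$ killing $\Lambda_n$, and in (ii) you commute the derived completion past $-\otimes^{\mathbb{L}}_{\ZZ_p}\Lambda_n$ using that $\Lambda_n$ is a perfect $\ZZ_p$-complex, whereas the paper rewrites everything modulo $p^n$ from the start; both routes work.

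One justification is off as written: you assert that $B/I$ is flat over $A/I$ ``by transversality, as in the proof of Lemma \ref{covering-I-completely-flat}''. A covering of prisms is only $(p,I)$-completely faithfully flat, and the proof of Lemma \ref{covering-I-completely-flat} gives flatness of $B/(p^m,I)$ over $A/(p^m,I)$ for each $m$, not flatness of $B/I$ over $A/I$; the latter does not follow from the hypotheses. The conclusion you need --- that $C/IC$ is $p$-torsion free, i.e. $(C,IC)$ is transversal --- is nonetheless true and easily repaired: $C/IC\cong\varprojlim_m (B\otimes_A B)/(I,p^m)$ with surjective transition maps, each term equals $B/(p^m,I)\otimes_{A/(p^m,I)}B/(p^m,I)$ and is flat over $\ZZ/p^m$ (base change of the flat map $A/(p^m,I)\to B/(p^m,I)$, with $A/(p^m,I)$ flat over $\ZZ/p^m$ by transversality of $(A,I)$), and a $p$-torsion element of such a limit vanishes at every finite stage. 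Equivalently, this is the statement that fiber products of covers in $\fX_{\Prism^\circ}$ remain transversal, which the paper itself uses implicitly by forming the \v{C}ech nerve inside $\fX_{\Prism^\circ}$. With that repair, your remaining steps (completeness of finite modules over a complete ring with finitely generated ideal, the identification $C/p^N\cong (M/p^N)^\wedge_I$, and the applications of Lemma \ref{flat-inverse-system-commute-with-base-change-fp}) go through.
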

\begin{proof}
    By $p$-nilpotentness of $\Lambda_n$ we may assume that $\Lambda_n$ is a $\ZZ/p^n$-algebra. Using lemma
    \ref{flat-inverse-system-commute-with-base-change-fp} we have
    \begin{align*}
        ((B\otimes_A B)^\wedge_{(p,I)}\otimes_{\ZZ_p}\Lambda_n)^\wedge_I & \cong ((B/p^n\otimes_{A/p^n} B/p^n)^\wedge_{I}\otimes_{\ZZ/p^n}\Lambda_n)^{\wedge}_I                                                            \\
                                                                         & \cong ((B/p^n\otimes_{A/p^n} B/p^n)\otimes_{\ZZ/p^n}\Lambda_n)^\wedge_I                                                                         \\
                                                                         & \cong((B\otimes_{\ZZ_p}\Lambda_n)^\wedge_{I}  \otimes_{(A\otimes_{\ZZ_p}\Lambda_n)^\wedge_{I}}(B\otimes_{\ZZ_p}\Lambda_n)^\wedge_{I})^\wedge_I.
    \end{align*}
    This shows (i). For (ii) we know by the proof of \cite[Proposition 3.2.]{wu2021galois}, that the derived $(p,I)$-completion $C$ of
    $B\otimes^{\mathbb{L}}_AB$ is concentrated in degree $0$ where it coincides with $(B\otimes_A B)^\wedge_{(p,I)}$. Since $(C,IC)$ is again a
    transversal prism, we then see that $(B\otimes^{\mathbb{L}}_{A} B)^\wedge_{(p,I)}\otimes^{\mathbb{L}}_{\ZZ_p}\Lambda_n$ is concentrated in
    degree $0$ where it coincides with $(B\otimes_{A} B)^\wedge_{(p,I)}\otimes_{\ZZ_p}\Lambda_n$. As in the proof of
    Lemma \ref{covering-I-completely-flat}, we may and do replace $(-\otimes_{\ZZ_p}\Lambda_n)^\wedge_I$ with
    $(-\otimes_{\ZZ_p}^{\mathbb{L}}\Lambda_n)^\wedge_I$, where $(-)^{\wedge}_I$ now denotes derived $I$-completion. We then have
    \begin{align*}
        ((B\otimes^{\mathbb{L}}_{A} B)^\wedge_{(p,I)}\otimes^{\mathbb{L}}_{\ZZ_p}\Lambda_n)^\wedge_I & \cong  ((B/p^n\otimes^{\mathbb{L}}_{A/p^n} B/p^n)^\wedge_{I}\otimes^{\mathbb{L}}_{\ZZ/p^n}\Lambda_n)^\wedge_I                                                                          \\
                                                                                                     & \cong  (B/p^n\otimes^{\mathbb{L}}_{A/p^n} B/p^n\otimes^{\mathbb{L}}_{\ZZ/p^n}\Lambda_n)^\wedge_I                                                                                       \\
                                                                                                     & \cong ((B\otimes^{\mathbb{L}}_{\ZZ_p}\Lambda_n)^\wedge_{I}  \otimes^{\mathbb{L}}_{(A\otimes_{\ZZ_p}\Lambda_n)^\wedge_{I}}(B\otimes^{\mathbb{L}}_{\ZZ_p}\Lambda_n)^\wedge_{I})^\wedge_I
    \end{align*}
    which shows (ii).
\end{proof}
\begin{proposition}\label{hEl-sheaf}
    $\hEL$ is a sheaf on $\fX_{\Prism^\circ}$ with vanishing higher chomology on any $(A,I)\in \fX_{\Prism^\circ}$.
\end{proposition}
\begin{proof}
    We argue as in the proof of \cite[Corollary 3.12]{bhatt2022prisms}. Let $(A,I)\to (B,IB)$ be a covering of transversal prisms and
    let $(B^\bullet,IB^\bullet)$ denote its Čech nerve in $\fX_{\Prism^\circ}$. By Lemma \ref{presheaf-compatible-with-fiber-product} (ii),
    we then have that $(B^\bullet\otimes_{\ZZ_p}\Lambda_n)^\wedge_I$ is the (derived) $I$-completed Čech nerve of
    $(A\otimes_{\ZZ_p}\Lambda_n)^\wedge_I\to (B\otimes_{\ZZ_p}\Lambda_n)^\wedge_I$ considered as a cosimplicial object in
    $D((A\otimes_{\ZZ_p}\Lambda_n)^\wedge_I)$. By Lemma \ref{covering-I-completely-flat} and $I$-complete faithfully flat descent, we see
    that $(A\otimes_{\ZZ_p}\Lambda_n)^\wedge_I\to (B^\bullet\otimes_{\ZZ_p}\Lambda_n)^\wedge_I$ is a limit diagram in
    $D((A\otimes_{\ZZ_p}\Lambda_n)^\wedge_I)$. Inverting $I$ and taking limits over $n$ (we actually take the $R\lim$ here, but by
    surjectivity of the transition maps, $R^1\lim=0$) we have that $\hEL(A,I)\to \hEL(B^\bullet,IB^\bullet)$ is a limit diagram
    in $D(\hEL(A,I))$. This shows that $\hEL$ is a sheaf, as well as the vanishing of higher Čech cohomology.
\end{proof}
\begin{notation}
    Let $\fX_{\Prism}^{\mathrm{perf}}$ denote the full subcategory of $\fX_\Prism$ consisting of perfect prisms and
    $\fX_{\Prism^\circ}^{\mathrm{perf}}$ the subcategory consisting of transversal perfect prisms.
\end{notation}
\begin{remark}
    By Proposition \ref{hEl-sheaf} $\hEL$ is a sheaf on $\fX_{\Prism^\circ}^{\text{perf}}$.
\end{remark}
In a next step we show that $\hEL$ is a sheaf on $\fX_{\Prism}^{\text{perf}}$, which will be used in §\ref{main-section2} to relate
(the yet to be defined) category of prismatic $(\Lambda,F)$-crystals on $\fX_{\Prism^\circ}$ to local systems. To this end, we remind
of the correspondence between perfect prisms and perfectoid rings.
\begin{theorem}[{\cite[Theorem 3.10]{bhatt2022prisms}}]\label{perfectprism-perfectoidrings}
    The functor $(A,I)\mapsto A/I$ induces an equivalence between the category of perfect prisms and the category of perfectoid rings.
    A quasi-inverse functor is given by $R\mapsto (A_{\mathrm{inf}}(R),\mathrm{ker}(\theta))$.
\end{theorem}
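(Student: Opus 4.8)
The plan is to route everything through the structure theory of perfect $\delta$-rings together with the tilting/untilting dictionary for perfectoid rings. Recall that a ring $R$ is perfectoid precisely when it is $p$-complete, the Frobenius on $R/p$ is surjective, there is an element $\pi\in R$ with $p\in\pi^p R$, and the kernel of $\theta\colon A_{\inf}(R)=W(R^\flat)\to R$ is principal, where $R^\flat=\varprojlim_{x\mapsto x^p}R/p$. Since the two candidate functors are exactly the ones named in the statement, the content is threefold: (a) that $(A,I)\mapsto A/I$ really produces a perfectoid ring; (b) that $R\mapsto(A_{\inf}(R),\mathrm{ker}(\theta))$ really produces a perfect prism; and (c) that the two composites are naturally isomorphic to the respective identity functors. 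I would treat (a) and one half of (c) together, since a single computation yields both.

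First I would record the structural input: a perfect $\delta$-ring $A$ (one whose Frobenius is bijective) that is derived $p$-complete is automatically $p$-torsion free, $A/p$ is a perfect $\FF_p$-algebra, and the canonical map $A\xrightarrow{\sim}W(A/p)$ is an isomorphism; equivalently, $A\mapsto A/p$ is an equivalence from perfect $\delta$-rings to perfect $\FF_p$-algebras with inverse $W(-)$. Given this, let $(A,I)$ be a perfect prism; by a rigidity argument $I=(d)$ for a distinguished element $d$, which is a non-zero-divisor since $A$ is $(p,I)$-complete and $p$-torsion free, so $R:=A/d$ is $p$-complete. Writing $A=W(A_0)$ with $A_0:=A/p$ perfect and $d\equiv[a_0]\bmod p$ with $a_0\in A_0$ topologically nilpotent (the shape of a distinguished element in Teichmüller coordinates, the next coordinate being a unit), one gets $R/p\cong A_0/a_0$, whence the Frobenius on $R/p$ is surjective because it is bijective on the perfect ring $A_0$, and $\pi:=[a_0^{1/p}]\bmod d$ satisfies $p\in\pi^p R$. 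The decisive point is the identification of $A_{\inf}(R)$: one shows $R^\flat=\varprojlim_\varphi A_0/a_0\cong A_0$ by the standard ``lift and take $p^n$-th powers'' argument (using that $A_0$ is perfect and $a_0$-adically separated-complete, so that in characteristic $p$ the successive differences land in $(a_0^{p^n})$ and the limits exist), hence $A_{\inf}(R)=W(R^\flat)\cong W(A_0)=A$; under this identification $\theta$ becomes precisely the projection $A\to A/d$ (since the sharp map $A_0\to A/d$ is $x\mapsto[x]\bmod d$), so $\mathrm{ker}(\theta)=(d)=I$ is principal. This proves (a) and simultaneously produces the natural isomorphism $(A_{\inf}(A/I),\mathrm{ker}(\theta))\cong(A,I)$, one of the two composites in (c).

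For (b), if $R$ is perfectoid then $A_{\inf}(R)=W(R^\flat)$ is a perfect $\delta$-ring that is $p$-torsion free and $(p,\mathrm{ker}(\theta))$-complete; $\mathrm{ker}(\theta)$ is principal by hypothesis, and one checks that any generator $\xi$ is a distinguished non-zero-divisor, so that $p\in(\xi,\varphi(\xi))$ (coming from $\delta(\xi)$ being a unit) — exactly what it means for $(A_{\inf}(R),\mathrm{ker}(\theta))$ to be a prism, manifestly perfect. Surjectivity of $\theta$ (a consequence of $p$-completeness together with surjectivity of Frobenius on $R/p$) gives $A_{\inf}(R)/\mathrm{ker}(\theta)\cong R$, the other composite in (c), and tilting this back recovers $R^\flat$, so the two isomorphisms are mutually compatible. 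Functoriality is then formal: a $\delta$-ring map $A\to A'$ carrying $I$ into $I'$ induces $A/I\to A'/I'$, and conversely a map $R\to R'$ of perfectoid rings induces $R^\flat\to R'^\flat$, hence $A_{\inf}(R)\to A_{\inf}(R')$ compatibly with $\theta$, hence with the distinguished ideals.

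The step I expect to be the main obstacle is the assertion inside (a) that untilting loses no information — concretely, that $R^\flat\cong A/p$ and hence $A_{\inf}(A/I)\cong A$ canonically. This is where one genuinely uses that $A$ is $p$-torsion free, that $d$ is a non-zero-divisor with the sharp congruence $d\equiv[a_0]\bmod p$, and that $A/p$ is $a_0$-adically complete; without these the inverse-limit perfection of the highly non-reduced ring $A_0/a_0$ would not be controllable. A secondary, logically prior difficulty is the structure theorem $A\cong W(A/p)$ for perfect $\delta$-rings, whose proof leans essentially on the bijectivity of Frobenius both to kill $p$-torsion and to pin down the Witt-vector lift.
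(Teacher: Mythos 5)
The paper does not prove this statement; it is quoted verbatim from \cite[Theorem 3.10]{bhatt2022prisms}, so there is no internal argument to compare against. Your sketch is, in outline, exactly the Bhatt--Scholze proof: use the equivalence between derived $p$-complete perfect $\delta$-rings and perfect $\FF_p$-algebras ($A\cong W(A/p)$), write $I=(d)$ with $d$ distinguished, $d\equiv[a_0]\bmod p$, deduce that $R=A/d$ is perfectoid, identify $R^\flat\cong A/p$ by the lift-and-raise-to-$p^n$-th-powers argument, and hence $A_{\mathrm{inf}}(R)\cong A$ with $\ker(\theta)=(d)$; in the other direction check that a generator of $\ker(\theta)$ is a distinguished non-zero-divisor. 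I see no step that would fail, but be aware that the places you gloss over are precisely the supporting lemmas of \emph{loc.\ cit.}: (i) classical versus derived completeness (it matters that in a perfect, hence reduced, ring the $f^n$-torsion equals the $f$-torsion, so derived $f$-completeness of $A/p$ upgrades to classical $a_0$-adic completeness, and that perfect prisms are bounded so $A/I$ is classically $p$-complete — perfectoid rings may well have $p$-torsion); (ii) the identification of $\theta$ with the projection $A\to A/d$ under $W(R^\flat)\cong W(A/p)$ uses the universal property of $W(-)$ on perfect $\FF_p$-algebras, not just a formula on Teichm\"uller lifts; (iii) in your step (b), distinguishedness of an arbitrary generator of $\ker(\theta)$ and $(p,\ker\theta)$-completeness of $A_{\mathrm{inf}}(R)$ are genuine lemmas, not formalities. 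With those lemmas supplied, your argument is the standard and correct one.
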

\begin{corollary}\label{perf-sheaf}
    $\hEL$ is a sheaf on $\fX_{\Prism}^{\mathrm{perf}}$.
\end{corollary}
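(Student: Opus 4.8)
The plan is to reduce the statement to \Cref{hEl-sheaf} (more precisely, to the remark following it, which gives sheafiness on $\fX_{\Prism^\circ}^{\mathrm{perf}}$) by using \Cref{perfectprism-perfectoidrings} to replace an arbitrary perfect prism by a transversal one without changing $\hEL$.

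First I would unwind the sheaf condition in the usual way. As in the proof of \Cref{hEl-sheaf}, the derived $(p,I)$-completed tensor powers of perfect prisms are again perfect prisms, so for a covering $(A,I)\to(B,IB)$ in $\fX_\Prism^{\mathrm{perf}}$ the Čech nerve $(B^\bullet,IB^\bullet)$ is a cosimplicial object of $\fX_\Prism^{\mathrm{perf}}$, and it suffices to show that the augmentation $\hEL(A,I)\to\hEL(B^\bullet,IB^\bullet)$ is a limit diagram in $D(\hEL(A,I))$ (this also yields the vanishing of higher cohomology). Taking $R\lim_n$ (with $R^1\lim=0$ by the surjectivity in \Cref{hypothesis-1}(ii)) and using exactness of inverting $I$, one reduces to the analogous statement for each $\ELn$.

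Now the key device. Given a perfect prism $(A,I)$, let $R=A/I$ be the associated perfectoid ring (\Cref{perfectprism-perfectoidrings}), fix a pseudouniformizer $\varpi$, and set $R_{\mathrm{tf}}:=R/R[\varpi^\infty]$ (the quotient by the ideal of $\varpi$-power, equivalently $p$-power, torsion elements; this is independent of $\varpi$). Then $R_{\mathrm{tf}}$ is again perfectoid and is $p$-torsion free, so it corresponds under \Cref{perfectprism-perfectoidrings} to a transversal perfect prism $(A_{\mathrm{tf}},I_{\mathrm{tf}})\in\fX_{\Prism^\circ}^{\mathrm{perf}}$, and the surjection $R\twoheadrightarrow R_{\mathrm{tf}}$ lifts to a surjection of perfect prisms $(A,I)\twoheadrightarrow(A_{\mathrm{tf}},I_{\mathrm{tf}})$ with kernel $\mathfrak a\subseteq A$. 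I claim $\ELn(A,I)\to\ELn(A_{\mathrm{tf}},I_{\mathrm{tf}})$ is an isomorphism for all $n$, i.e.\ that $(\mathfrak a\otimes_{\ZZ_p}\Lambda_n)^\wedge_I[\tfrac1I]=0$. Writing $d$ for a local generator of $I$ and $\bar d=d\bmod p$, flatness of $A$ over $\ZZ_p$ gives $\mathfrak a/p\mathfrak a=\ker(A/p\to A_{\mathrm{tf}}/p)=\ker(R^\flat\to R_{\mathrm{tf}}^\flat)$; since tilting is compatible with inverting the pseudouniformizer and $R[\tfrac1\varpi]=R_{\mathrm{tf}}[\tfrac1\varpi]$, this ideal is killed by inverting $\bar d$. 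Hence for $a\in\mathfrak a$ one has $d^{m}a\in p\mathfrak a$ for some $m$, so $d^{km}a\in p^{k}\mathfrak a$, and since $p^{N}\Lambda_n=0$ for $N\gg0$ every element of $\mathfrak a\otimes_{\ZZ_p}\Lambda_n$ is killed by a power of $d$; after $I$-completion and inverting $I$ the module vanishes. Consequently $\hEL(A,I)\xrightarrow{\ \sim\ }\hEL(A_{\mathrm{tf}},I_{\mathrm{tf}})$.

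Finally I would check that $(A,I)\mapsto(A_{\mathrm{tf}},I_{\mathrm{tf}})$ is compatible with coverings and Čech nerves: since $A\to B$ is $(p,I)$-completely faithfully flat and flat base change commutes with the formation of torsion submodules, one gets $S_{\mathrm{tf}}=S/S[\varpi^\infty]=S\otimes_R R_{\mathrm{tf}}$ for $S=B/IB$, so $(A_{\mathrm{tf}},I_{\mathrm{tf}})\to(B_{\mathrm{tf}},I_{\mathrm{tf}}B_{\mathrm{tf}})$ is a covering in $\fX_{\Prism^\circ}^{\mathrm{perf}}$ with Čech nerve $(B_{\mathrm{tf}}^\bullet,I_{\mathrm{tf}}B_{\mathrm{tf}}^\bullet)$. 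Combining this with the previous paragraph identifies the augmented Čech complex of $\hEL$ for $(A,I)\to(B,IB)$ with that for $(A_{\mathrm{tf}},I_{\mathrm{tf}})\to(B_{\mathrm{tf}},I_{\mathrm{tf}}B_{\mathrm{tf}})$, which is a limit diagram with vanishing higher cohomology by the remark after \Cref{hEl-sheaf}; this proves the corollary. I expect the main obstacle to be exactly these last compatibilities together with the isomorphism $\ELn(A,I)\simeq\ELn(A_{\mathrm{tf}},I_{\mathrm{tf}})$ — that is, controlling the interplay between the (non‑exact) passage to the $p$-torsion free quotient, derived $(p,I)$-completion, and inversion of $I$, in particular in the presence of possibly unbounded $p^\infty$-torsion in $R$ — and, relatedly, verifying carefully that $R_{\mathrm{tf}}$ is perfectoid and that $A\to A_{\mathrm{tf}}$ behaves as asserted at the level of Witt vectors.
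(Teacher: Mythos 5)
Your route is genuinely different from the paper's and substantially heavier, and as written it has two real gaps, precisely at the points you flag at the end. First, the key vanishing $(\mathfrak a\otimes_{\ZZ_p}\Lambda_n)^\wedge_I[\tfrac1I]=0$ does not follow from what you argue. The iteration ``$d^{m}a\in p\mathfrak a$ hence $d^{km}a\in p^{k}\mathfrak a$'' is unjustified (the exponent needed at each division by $p$ may grow), and, more seriously, even granting that every element of $\mathfrak a\otimes\Lambda_n$ is killed by \emph{some} power of $d$, a classically $I$-adically completed module with unbounded $d$-power torsion need not be $d$-power torsion, so ``complete, then invert $I$'' need not kill it. To repair this you need a uniform bound; one is available, e.g.\ by showing first that $J=\ker(R^\flat\to R_{\mathrm{tf}}^\flat)$ is killed by $\bar d$ itself (once each element is $\bar d$-power torsion this follows from reducedness of the perfect ring $R^\flat$), whence $[\bar d]\mathfrak a=0$, $d\mathfrak a\subseteq p\mathfrak a$ and $d^{N}$ kills $\mathfrak a\otimes\Lambda_n$ when $p^{N}\Lambda_n=0$; even then you must still handle the failure of left-exactness of $-\otimes\Lambda_n$ and of classical $I$-completion before concluding $\ELn(A,I)\cong\ELn(A_{\mathrm{tf}},I_{\mathrm{tf}})$. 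None of this is in the proposal, nor are the (true, but not free) inputs that $R_{\mathrm{tf}}$ is perfectoid, that $R^\flat\to R_{\mathrm{tf}}^\flat$ is surjective, and that $\varpi$-power and $p$-power torsion agree.

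Second, the compatibility with coverings is asserted via ``flat base change commutes with the formation of torsion submodules'', but $A\to B$ is only $(p,I)$-completely faithfully flat and the Čech nerve in the perfect site is formed with completed tensor products; so $S_{\mathrm{tf}}\cong S\otimes_R R_{\mathrm{tf}}$, the identification of $((B\otimes_AB)^\wedge_{(p,I)})_{\mathrm{tf}}$ with $(B_{\mathrm{tf}}\otimes_{A_{\mathrm{tf}}}B_{\mathrm{tf}})^\wedge_{(p,I)}$, and the $(p,I)$-complete faithful flatness of $A_{\mathrm{tf}}\to B_{\mathrm{tf}}$ (needed to invoke the transversal case, Proposition \ref{hEl-sheaf}) all require arguments that are not given. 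By contrast, the paper's proof is a short dichotomy: a perfect prism is either transversal or crystalline (a perfectoid ring being, per the paper, $p$-torsion free or a perfect $\FF_p$-algebra), $\hEL$ vanishes on crystalline prisms because $\Lambda_n$ is $p$-nilpotent, and faithfully flat maps of perfect prisms preserve the two classes, so sheafiness reduces at once to the transversal perfect site. (Your torsion-free-quotient device would, if completed, also treat ``mixed'' perfectoid rings such as a product of the two types, which the paper's dichotomy as stated does not literally cover; but as it stands the proposal is not yet a proof.)
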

\begin{proof}
    Let $(A,I)$ denote a perfect prism and let $R=A/I$ be the corresponding perfectoid ring. Since perfectoid rings are reduced,
    $R$ is either $p$-torsion free or a perfect $\FF_p$-algebra. Therefore, $(A,I)$ is either a transversal prism or crystalline
    (i.e. $I=(p)$). For crystalline prisms $(A,(p))$, $\hEL(A,(p))=0$ by $p$-nilpotentness of $\Lambda_n$. Let $(A,I)\to (B,J)$
    be a faithfully flat map of perfect prism. By \cite[Remark 2.4.4. ]{bhatt2022absolute}, we have that $(A,I)$ transversal if
    and only if $(B,J)$ is transversal, hence $(A,I)$ is crystalline if and only if $(B,J)$ is crystalline. The claim then follows,
    since $\hEL$ is a sheaf on $\fX_{\Prism^\circ}^{\text{perf}}$ and $=0$ on $\fX_{\Prism}^{\text{perf}}$ outside
    $\fX_{\Prism^\circ}^{\text{perf}}$.
\end{proof}
We will now investigate the behavior of vector bundles on the ringed site $(\fX_{\Prism^\circ},\hEL)$. This will ultimately
lead to the definition of prismatic $(\Lambda,F)$-crystals.
\begin{notation}[{\cite[Notation 2.1]{bhatt2023prismatic}}]
    Let $(\mathcal{X},\mathcal{O})$ be a ringed topos and $\ast$ the final object of $\mathcal{X}$.
    \begin{enumerate}[label=(\roman*)]
        \item For a commutative ring $A$ we write $\mathrm{Vect}(A)$ for the category of finite projective $A$-modules. 
        \item A vector bundle on $(\mathcal{X},\mathcal{O})$ is an $\mathcal{O}$-module $E$ such that there exists a cover
              $\{U_i\to \ast\}$ and finite projective $\mathcal{O}(U_i)$-modules $P_i$ such that
              $E_{U_i}\cong P_i\otimes_{\mathcal{O}(U_i)}\mathcal{O}_{U_i}$. We denote the category of vector bundles on
              $(\mathcal{X},\mathcal{O})$ by $\mathrm{Vect}(\mathcal{X},\mathcal{O})$.
    \end{enumerate}
\end{notation}
\begin{proposition}\label{vector-bundle-on-hEL}
    There is an equivalence of categories \[
        \mathrm{Vect}(\fX_{\Prism^\circ}, \hEL) \cong \lim_{(A,I)\in \fX_{\Prism^\circ}}\mathrm{Vect}(\hEL(A,I)).
    \]
\end{proposition}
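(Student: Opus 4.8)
The plan is to run the standard argument that identifies vector bundles on a ringed site with a $2$-limit of module categories over the sections, as in the étale/Laurent-coefficient part of \cite{bhatt2023prismatic} (compare also \cite{bhatt2022prisms}), the only real inputs being the sheaf property and cohomology vanishing of Proposition~\ref{hEl-sheaf} together with the faithfully flat descent packaged by Lemmas~\ref{covering-I-completely-flat} and~\ref{presheaf-compatible-with-fiber-product}. The right-hand side is the pseudolimit of the categories $\mathrm{Vect}(\hEL(A,I))$ along the base-change functors $M\mapsto M\otimes_{\hEL(A,I)}\hEL(A',I')$ attached to morphisms $(A,I)\to(A',I')$ in $\fX_{\Prism^\circ}$; its objects are compatible families $(M_{(A,I)})$. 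I would produce a functor $\Phi$ from the left to the right by ``evaluation'' and a gluing functor $\Psi$ back, and check they are mutually quasi-inverse.

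First I would define $\Phi$ by sending a vector bundle $E$ to the family $(E(A,I))_{(A,I)}$ of its values, with transition maps induced by the restriction maps of $E$. The substance is that each $E(A,I)$ is finite projective over $\hEL(A,I)$ and that $E(A,I)\otimes_{\hEL(A,I)}\hEL(A',I')\to E(A',I')$ is an isomorphism for every morphism in the site. Both are local on the target: choose a covering $(A,I)\to(B,IB)$ on which $E$ restricts to (the sheaf attached to) a finite projective $\hEL(B,IB)$-module $P$, and let $(B^\bullet,IB^\bullet)$ be its Čech nerve. By Lemma~\ref{presheaf-compatible-with-fiber-product}(ii) and Proposition~\ref{hEl-sheaf}, $\hEL(B^\bullet,IB^\bullet)$ is the Čech conerve of the faithfully flat map $\hEL(A,I)\to\hEL(B,IB)$, and $R\Gamma\big((A,I),\hEL|_{(A,I)}^{\oplus N}\big)$ is concentrated in degree $0$; since $E$ is a sheaf and $E|_{(B^k,IB^k)}$ is a pullback of $P$ (so has no higher cohomology either), $E(A,I)$ is computed by the totalization of $[k]\mapsto P\otimes_{\hEL(B,IB)}\hEL(B^k,IB^k)$, which is concentrated in degree $0$ where it is the faithfully flat descent of $P$, hence finite projective over $\hEL(A,I)$. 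Writing $P$ as a direct summand of a finite free module reduces the degree-$0$ concentration to the vanishing statement for $\hEL$. The base-change isomorphism is obtained the same way after pushing the covering out along $(A,I)\to(A',I')$, where everything trivializes to the same module.

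Conversely, $\Psi$ sends a compatible family $M=(M_{(A,I)})$ to the presheaf of $\hEL$-modules $(A,I)\mapsto M_{(A,I)}$, with restriction maps $M_{(A,I)}\to M_{(A,I)}\otimes_{\hEL(A,I)}\hEL(A',I')\xrightarrow{\ \sim\ }M_{(A',I')}$ coming from the family datum (the cocycle condition gives functoriality). Faithfully flat descent of modules along the covering maps, using again Lemma~\ref{presheaf-compatible-with-fiber-product}(ii) to identify the relevant fibre products, shows this presheaf already satisfies the sheaf condition, so it is an $\hEL$-module; it is a vector bundle because its restriction to any object $(A,I)$ is by construction $M_{(A,I)}\otimes_{\hEL(A,I)}\hEL_{(A,I)}$ with $M_{(A,I)}$ finite projective, and the objects $(A,I)$ jointly cover the final object of the topos. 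That $\Phi$ and $\Psi$ are mutually quasi-inverse is then formal: $\Phi\Psi(M)=M$ by construction, and $\Psi\Phi(E)=E$ since a sheaf of $\hEL$-modules is recovered from its values on objects together with its restriction maps.

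The main obstacle is the descent input invoked repeatedly above: that a covering $(A,I)\to(B,IB)$ really induces a \emph{faithfully flat} map $\hEL(A,I)\to\hEL(B,IB)$ with Čech conerve $\hEL(B^\bullet,IB^\bullet)$, so that ordinary faithfully flat descent of finite projective modules applies. Lemma~\ref{covering-I-completely-flat} provides this only $I$-completely at finite level $n$, so one must (i) upgrade $I$-complete faithful flatness of $(A\otimes_{\ZZ_p}\Lambda_n)^\wedge_I\to(B\otimes_{\ZZ_p}\Lambda_n)^\wedge_I$ to genuine faithful flatness of $\ELn(A,I)\to\ELn(B,IB)$ after inverting $I$ — using that for transversal prisms $(A\otimes_{\ZZ_p}\Lambda_n)^\wedge_I=A\otimes_{\ZZ_p}\Lambda_n$ is already classically $I$-complete and $I=(d)$ is generated by a nonzerodivisor on it — and (ii) check that inverting $d$ and passing to $\varprojlim_n$ (where $R^1\lim$ vanishes by surjectivity of the transition maps, exactly as in the proof of Proposition~\ref{hEl-sheaf}) turns the $I$-completed tensor powers of Lemma~\ref{presheaf-compatible-with-fiber-product}(i) into the honest Čech conerve of $\hEL(A,I)\to\hEL(B,IB)$. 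Once this bookkeeping is done, the equivalence is just faithfully flat descent plus the sheaf property of $\hEL$.
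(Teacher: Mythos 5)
There is a genuine gap, and it sits exactly at the step you flag as ``bookkeeping'' at the end: the claimed upgrade of $I$-complete faithful flatness of $(A\otimes_{\ZZ_p}\Lambda_n)^\wedge_I\to(B\otimes_{\ZZ_p}\Lambda_n)^\wedge_I$ to \emph{honest} faithful flatness of $\ELn(A,I)\to\ELn(B,IB)$ after inverting $I$. $I$-complete (faithful) flatness only controls the derived reduction modulo $I$; it gives no information about the localization $[1/I]$, and the fact that the source is classically $I$-complete with $I=(d)$ generated by a nonzerodivisor does not repair this. In the prismatic setting the maps one obtains after inverting $I$ (and, for $\Lambda=\ZZ_p$, $p$-completing) are in general \emph{not} flat --- this is precisely why descent for Laurent $F$-crystals in \cite{bhatt2023prismatic} is a nontrivial theorem resting on \cite[Theorem 5.8]{mathew2021faithfully} rather than on classical fppf descent of finite projective modules. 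Since your entire construction of $\Phi$ and $\Psi$ (finite projectivity of $E(A,I)$, the base-change isomorphisms, and the gluing of a compatible family into a sheaf) is run through ordinary faithfully flat descent along $\hEL(A,I)\to\hEL(B,IB)$, the argument does not go through as written. The paper's proof avoids this by showing that $(A,I)\mapsto\mathrm{Vect}(\ELn(A,I))$ is a stack directly from Lemmas \ref{covering-I-completely-flat} and \ref{presheaf-compatible-with-fiber-product} together with Mathew's descent result, which is stated for $I$-completely faithfully flat maps and applies to vector bundles on the $I$-inverted rings without any flatness of the induced map.

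A secondary point: even granting flatness at each finite level $n$, your plan works directly with the limit ring $\hEL(A,I)=\varprojlim_n\ELn(A,I)$, and faithful flatness (or descent) does not automatically pass to such an inverse limit; the $R^1\lim$-vanishing you invoke handles the sheaf property of $\hEL$, not descent of vector bundles over it. The paper handles the passage from finite level to $\Lambda$ categorically: the kernels of $\mathcal{E}_{\Prism,\Lambda_{n+1}}(A,I)\to\ELn(A,I)$ are nilpotent and finitely generated, so by \cite[Tag 0D4B]{stacks-project} one has $\mathrm{Vect}(\hEL(A,I))\cong 2\text{-}\varprojlim_n\mathrm{Vect}(\ELn(A,I))$, and a $2$-limit of stacks is a stack. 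If you want to salvage your evaluation/gluing presentation, you should replace the fppf-descent input by the $I$-complete descent theorem at each level $n$ and add this deformation-theoretic step to deal with the limit over $n$.
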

\begin{proof}
    Identifying $\mathrm{Vect}(\fX_{\Prism^\circ}, \hEL)$ with the global sections of the stackification of the fibered category
    $(A,I)\to \mathrm{Vect}(\hEL(A,I))$, it suffices to show that this assignment is already a sheaf. The kernel of
    $\mathcal{E}_{\Prism,\Lambda_{n+1}}(A,I)\to \mathcal{E}_{\Prism,\Lambda_n}(A,I)$ is generated by the image of
    $\ker (\Lambda_{n+1}\to \Lambda_n)$ under the map $\Lambda_{n+1}\to (A\otimes_{\ZZ_p}\Lambda_{n+1})^\wedge_{I}[1/I]=A[1/I]\otimes_{\ZZ_p}\Lambda_{n+1}$.
    Since the latter kernel is nilpotent and finitely generated, the former kernel is as well. Combined with the surjectivity of the
    transition maps this induces an equivalence of categories
    \[
        \mathrm{Vect}(\hEL(A,I)) \cong 2\text{ -}\varprojlim_n \mathrm{Vect}(\ELn(A,I))
    \]
    by \cite[\href{https://stacks.math.columbia.edu/tag/0D4B}{Tag 0D4B}]{stacks-project}. As the 2-limit of stacks is again a stack,
    it suffices to show that for any $n \ge 1$ the fibered category \[
        (A,I)\to \mathrm{Vect}(\mathcal{E}_{\Prism,\Lambda_n}(A,I))
    \]
    is a stack. This follows from Lemma \ref{covering-I-completely-flat}, Lemma \ref{presheaf-compatible-with-fiber-product} and
    the descent result \cite[Theorem 5.8]{mathew2021faithfully}.
\end{proof}
\begin{remark}
    Arguing as in Corollary \ref{perf-sheaf}, one shows that Proposition \ref{vector-bundle-on-hEL} holds when $\fX_{\Prism^\circ}$ is
    replaced by $\fX_{\Prism}^{\text{perf}}$.
\end{remark}
\begin{definition}
    Let $R$ be a commutative ring and $\varphi:R\to R$ an endomorphism. Pullback along $\varphi$ induces a functor
    $\mathrm{Vect}(R)\overset{\varphi^\ast}{\to}\mathrm{Vect}(R)$. We define the category of étale $\varphi$-modules over $R$ as the
    (2-)fiber product of $\mathrm{Vect}(R)\overset{\varphi^\ast}{\to}\mathrm{Vect}(R)\overset{\operatorname{id}}{\leftarrow}\mathrm{Vect}(R)$,
    which we denote by $\mathrm{Vect}(R)^{\varphi=1}$.
\end{definition}
\begin{remark}
    Objects in $\mathrm{Vect}(R)^{\varphi=1}$ are finite projective $R$-modules $M$ together with an $R$-module isomorphism
    $f:\varphi^\ast M\cong M$. By defining $\varphi_M(-):=f(1\otimes -)$, one sees that this is just the category of finite
    projective $R$-modules $M$ together with a $\varphi$-semi linear endomorphism $\varphi_M:M\to M$ for which the linearization
    $\varphi_M^{\text{lin}}$ is an isomorphism of $R$-modules. This coincides with the usual definition of étale $\varphi$-modules.
\end{remark}
\begin{definition}\label{definition-(lambda,f)-crystals}
    The category of prismatic $(\Lambda,F)$-crystals on $\fX_{\Prism^\circ}$ is defined as \[
        \mathrm{Vect}(\fX_{\Prism^\circ},\hEL)^{\varphi=1}:= \lim_{(A,I)\in \fX_{\Prism^\circ}}\mathrm{Vect}(\hEL(A,I))^{\varphi=1}.
    \]
    We similarly define the categories $\mathrm{Vect}(\fX_{\Prism^\circ}^{\mathrm{perf}},\hEL)^{\varphi=1}$
    and $\mathrm{Vect}(\fX_{\Prism}^{\mathrm{perf}},\hEL)^{\varphi=1}$.
\end{definition}
\begin{remark}
    Thanks to Proposition \ref{vector-bundle-on-hEL}, the category of $(\Lambda,F)$-crystals on $\fX_{\Prism^\circ}$ identifies
    with the category of pairs $(E,f)$, where $E\in \mathrm{Vect}(\fX_{\Prism^\circ},\hEL)$ and $f$ is an isomorphism $f:\varphi^\ast E\cong E$
    (here: $\varphi=\varphi_\Prism$ as defined in Construction \ref{construction-of-presheaves}). One has similar descriptions for the categories
    $\mathrm{Vect}(\fX_{\Prism^\circ}^{\mathrm{perf}},\hEL)^{\varphi=1}$ and $\mathrm{Vect}(\fX_{\Prism}^{\mathrm{perf}},\hEL)^{\varphi=1}$.
\end{remark}
\begin{example}
    Let $\fX$ be a $p$-torsion free quasi-syntomic $p$-adic formal scheme. For $\Lambda=\ZZ_p$ and $\Lambda_n=\ZZ/p^n$,
    the category of $(\Lambda,F)$-crystals on $\fX_{\Prism^\circ}$ is equivalent to the category of Laurent $F$-crystals on $\fX_{\Prism}$
    in the sense of \cite[Definition 3.2]{bhatt2023prismatic}. Indeed, it is clear that in this case the structure sheaf $\hEL$ is just
    $\mathcal{O}_\Prism[1/\mathcal{I}_\Prism]^\wedge_p$, and by \cite[Lemma 2.3.]{du2022new} pullback along $\fX_{\Prism^\circ}\to \fX_{\Prism}$
    induces an equivalence of categories
    \[
        \mathrm{Vect}(\fX_{\Prism},\mathcal{O}_\Prism[1/\mathcal{I}_\Prism]^\wedge_p)^{\varphi=1}\cong \mathrm{Vect}(\fX_{\Prism^\circ},\mathcal{O}_\Prism[1/\mathcal{I}_\Prism]^\wedge_p)^{\varphi=1}.
    \]
    By \cite[Corollary 3.8]{bhatt2023prismatic}, the latter category is equivalent to the category of $\ZZ_p$-local systems on the generic fiber of $\fX$. In §\ref{main-section2}, we will extend this result to more general coefficients $\Lambda$. 
\end{example}
\section{Local systems and prismatic \texorpdfstring{$(\Lambda,F)$}{(Λ, F)}-crystals}\label{main-section2}
The object of this section is to study $(\Lambda,F)$-crystals under additional finiteness assumptions on $\Lambda$ and relate them and their cohomology
to (pro-étale) local systems on the generic adic fiber $\fX_\eta$ of $\fX$. As an application we construct a functor $\Prism_{\text{Iw}}:\mathrm{Rep}_{\ZZ_p}(G_K)\to \mathrm{Vect}((\mathcal{O}_K)_{\Prism^\circ},\hEL)^{\varphi=1}$
which computes the Iwasawa cohomology of a $p$-adic Lie extension of a $p$-adic field $K/\QQ_p$.
\begin{hypothesis}\label{hypothesis-2}
    Throughout §\ref{main-section2} we fix a complete noetherian local ring $(\Lambda,\mathfrak{m})$ with finite residue field $k_\Lambda$ 
    of characteristic $p$. Let $\Lambda_n:=\Lambda/\mathfrak{m}^n$ for $n\ge 1$. Then $\Lambda=\varprojlim_n \Lambda_n$, which puts 
    us in the setting of Hypothesis \ref{hypothesis-1}.
\end{hypothesis}
\begin{remark}
In the situation of Hypothesis \ref{hypothesis-2} the sheaf $\hEL$ is given by $(A,I)\mapsto A[\tfrac{1}{I}]\widehat{\otimes}_{\ZZ_p}\Lambda$, 
where $\widehat{\otimes}_{\ZZ_p}$ denotes the classical $\mathfrak{m}$-adic completion of the usual tensor product.
\end{remark}
\begin{notation}[{\cite[Notation 3.1.]{bhatt2023prismatic}}]
For a bounded $p$-adic formal scheme $\mathfrak{X}$ we shall write $\mathfrak{X}_\eta$ for its adic generic fiber which we regard as a 
locally spatial diamond (cf. \cite[§39]{scholze2022etale}). For a scheme, formal scheme, or adic space $X$ we write $X_{\text{ét}}$ 
for the étale site of $X$. For $n\ge 1$, we shall write $\mathrm{Loc}_{\Lambda_n}(X)$ for the category of étale $\Lambda_n$-local systems
 on $X_{\text{ét}}$, i.e. sheaves $\mathcal{L}$ of flat $\Lambda_n$-modules, which locally on $X_{\text{ét}}$ are constant sheaves
associated to a finitely generated $\Lambda_n$-module. Recall that one has the notion of lisse $\Lambda$-local systems on $X_{\text{ét}}$, 
i.e. inverse systems $(\mathcal{L}_n)_{n\ge 1}$, such that each $\mathcal{L}_n$ is an étale $\Lambda_n$-local system on $X_{\text{ét}}$,
and such that the transition maps $\mathcal{L}_{n+1}\to \mathcal{L}_n$ induce an isomorphism $\Lambda_n\otimes_{\Lambda_{n+1}} \mathcal{L}_{n+1} \overset{\sim}{\to} \mathcal{L}_{n}$ (cf. \cite[definition 8.1]{scholze2012padic}).
We denote this category by $\mathrm{Loc}_\Lambda(X)$. Note that by (the proof of) \cite[Proposition 8.2.]{scholze2022etale} 
$\mathrm{Loc}_\Lambda(X)$ identifies with the category of lisse $\hat{\Lambda}$-sheaves on the pro-étale site $X_{\text{proét}}$ in the 
sense of \cite[Definition 8.1.]{scholze2022etale}.
If $X=\mathrm{Spec}(A)$ is affine we simply write $\mathrm{Loc}_\Lambda(A)$ for $\mathrm{Loc}_\Lambda(\mathrm{Spec}(A))$
and similarly for the other categories.
\end{notation}
The main result of this section is the following analogue of \cite[Theorem 5.15. (i)]{marks2023prismatic}.
\begin{theorem}\label{mainthm-1}
    Let $\mathfrak{X}$ be a bounded $p$-adic formal scheme.
    There are equivalences of categories
        \begin{align*}
            \mathrm{Vect}(\fX_{\Prism^\circ},\hEL)^{\varphi=1} \cong \mathrm{Vect}(\fX^\mathrm{perf}_{\Prism},\hEL)^{\varphi=1} \cong \mathrm{Loc}_\Lambda(\mathfrak{X}_\eta).
        \end{align*}
\end{theorem}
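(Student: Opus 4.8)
The plan is to prove both equivalences by reducing to the already-known perfectoid/prismatic statements and then descending. The first equivalence $\mathrm{Vect}(\fX_{\Prism^\circ},\hEL)^{\varphi=1} \cong \mathrm{Vect}(\fX^\mathrm{perf}_{\Prism},\hEL)^{\varphi=1}$ should follow from a ``perfection'' or ``base change to perfect prisms'' argument: given any transversal prism $(A,I)\in\fX_{\Prism^\circ}$, one can functorially pass to its perfection $(A_\perf, I A_\perf)$, which is a perfect prism, and the key point is that étale $\varphi$-modules over $\ELn(A,I)$ are insensitive to this perfection. Concretely, I would first reduce modulo $\mathfrak m^n$ (using Proposition \ref{vector-bundle-on-hEL} and the $2$-limit description) and then invoke the classical fact (as in \cite{bhatt2023prismatic}, \cite{marks2023prismatic}) that for the pair $(A,I)$ and its Frobenius-perfection, the categories of étale $\varphi$-modules agree — this is essentially the statement that $\varphi$ is an isomorphism on the relevant rings after inverting $I$ and $p$-completing, so that $\varphi^*$ becomes an equivalence and pulling back along $A\to A_\perf$ is fully faithful with essential image controlled by $\varphi$-descent. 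Both sides being limits over the respective sites, one checks the comparison is compatible with restriction maps; since $\fX^\perf_{\Prism}$ is cofinal-ish in the relevant sense (every object of $\fX_{\Prism^\circ}$ admits a cover by perfect prisms, by taking perfections), the induced functor on limits is an equivalence. Here I would use Corollary \ref{perf-sheaf} and the remark after Proposition \ref{vector-bundle-on-hEL} that the vector-bundle description also holds over $\fX^\perf_\Prism$.

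For the second equivalence $\mathrm{Vect}(\fX^\mathrm{perf}_{\Prism},\hEL)^{\varphi=1} \cong \mathrm{Loc}_\Lambda(\fX_\eta)$, I would argue level by level in $n$ and then take the limit. Fix $n$ and work with $\ELn$. Over a perfect prism $(A,I)$ with $A/I = R$ perfectoid, $\ELn(A,I) = (A\otimes_{\ZZ_p}\Lambda_n)^\wedge_I[1/I]$; when $\Lambda_n=\ZZ/p^n$ this is (a $p$-complete version of) $W(R^\flat)[1/I]^\wedge$, i.e. the ``$\mathbb A_\inf[1/I]$'' appearing in the theory of $\ZZ/p^n$-local systems on $\mathrm{Spa}(R[1/p])$, and étale $\varphi$-modules over it are equivalent to $\ZZ/p^n$-local systems by (the mod-$p^n$ version of) \cite[Corollary 3.8]{bhatt2023prismatic} — essentially the almost-purity / $\varphi$-module theory of Fontaine, Kedlaya–Liu, Scholze. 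For general $\Lambda_n$ one tensors: $(A\otimes_{\ZZ_p}\Lambda_n)^\wedge_I[1/I]$ is a finite (free, after base change along $\ZZ/p^n\to\Lambda_n$ which is finite) algebra over the $\ZZ/p^n$-version, and étale $\varphi$-modules over it correspond to $\Lambda_n$-local systems — this is bookkeeping with the finite flat base change $\ZZ/p^m \to \Lambda_n$ (for $p^m$ killing $\Lambda_n$), compatible with $\varphi$ because $\varphi$ acts only on the $A$-factor. Sheafifying over $\fX^\perf_\Prism$ and using that perfect prisms of $\fX$ correspond (via Theorem \ref{perfectprism-perfectoidrings}) to perfectoid rings with a map from $\mathcal O_\fX$, hence to an affinoid perfectoid cover of the diamond $\fX_\eta$, one gets $\mathrm{Vect}(\fX^\perf_\Prism,\ELn)^{\varphi=1}\cong\mathrm{Loc}_{\Lambda_n}(\fX_\eta)$; the left side is a limit over the site, the right side is global sections of the étale stack of $\Lambda_n$-local systems, and they match because the perfectoid objects form a basis. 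Finally take $\varprojlim_n$: the left side gives $\mathrm{Vect}(\fX^\perf_\Prism,\hEL)^{\varphi=1}$ by Proposition \ref{vector-bundle-on-hEL} (and its perfect analogue), and the right side gives $\mathrm{Loc}_\Lambda(\fX_\eta)$ by definition of lisse $\Lambda$-sheaves as compatible systems; compatibility of the equivalences in $n$ follows since they are all induced by the same tensor/base-change construction, and the transition maps $\ELn[1] \to \ELn$ correspond to $\Lambda_{n+1}\otimes(-)\cong(-)$ on local systems.

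The main obstacle I expect is the second equivalence at finite level for general (non-$\ZZ/p^n$) coefficients $\Lambda_n$: one must carefully set up the equivalence between étale $\varphi$-modules over $(A\otimes_{\ZZ_p}\Lambda_n)^\wedge_I[1/I]$ and $\Lambda_n$-local systems in a way that is natural in the perfect prism $(A,I)$ (so that it globalizes over $\fX^\perf_\Prism$) and natural in $n$ (so that it passes to the limit). For $\Lambda_n=\ZZ/p^n$ this is exactly \cite[Corollary 3.8]{bhatt2023prismatic} and its proof; the subtlety is that $\Lambda_n$-local systems on a diamond are not literally ``$\Lambda_n \otimes_{\ZZ/p^m}(\text{a }\ZZ/p^m\text{-local system})$'' — one genuinely needs the sheaf-theoretic statement that $\mathrm{Loc}_{\Lambda_n}$ is modules over the constant sheaf $\underline{\Lambda_n}$ internal to $\mathrm{Loc}_{\ZZ/p^m}$, matched on the prismatic side by $\Lambda_n$-linear structure on the $\varphi$-module. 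I would handle this by working relative to the base ring throughout: prove the equivalence $\mathrm{Vect}(S)^{\varphi=1}\cong\mathrm{Loc}_{\ZZ/p^m}$ for the perfectoid-type ring $S$ first, then observe both sides are module categories over $\Lambda_n$ (viewed as an algebra object), and pass to $\Lambda_n$-modules on each side — this commutes with the equivalence because it was an equivalence of symmetric monoidal categories (or at least of $\ZZ/p^m$-linear categories respecting the unit). The descent ingredients (faithfully flat descent for vector bundles, Lemmas \ref{covering-I-completely-flat} and \ref{presheaf-compatible-with-fiber-product}, the sheaf property of Corollary \ref{perf-sheaf}) are already in place, so the remaining work is genuinely this finite-level coefficient bookkeeping plus checking naturality.
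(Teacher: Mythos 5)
Your overall route is the same as the paper's: identify $\varphi$-modules over a transversal prism with those over its perfection and with $\Lambda$-local systems locally (this is Lemma \ref{perfection-same-phi-modules}, Lemma \ref{phi-modules-equiv-etale-site} and Proposition \ref{phi-mod-equiv-lisse} in the text, proved by dévissage modulo powers of $\mathfrak{m}$ exactly as you suggest), and then globalize over the perfectoid objects and pass to the limit in $n$. Two of your glosses, however, would not survive as written, and they sit precisely where the paper has to invoke specific external results. First, for the globalization you say the limit over perfect prisms matches ``global sections of the étale stack of $\Lambda_n$-local systems \dots because the perfectoid objects form a basis.'' Perfectoid covers of $\fX_\eta$ are not étale covers, so étale stackiness of $\mathrm{Loc}_{\Lambda_n}$ gives you nothing here; the needed input is that étale local systems with discrete coefficients satisfy descent for the finer v-topology on the diamond $\fX_\eta$, i.e.\ $\mathrm{Loc}_{\Lambda_n}(X_{\text{\'et}})\cong\mathrm{Loc}_{\Lambda_n}(X_v)$, together with the fact that the limit over all perfectoids computes v-sheaf sections. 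This is exactly where the paper cites \cite[Propositions 3.5, 3.7, 3.9]{mann2020local} and \cite[Proposition 15.4]{scholze2022etale}; without naming some such descent statement your identification of the two sides is an assertion, not an argument.

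Second, your stated mechanism for perfection-insensitivity --- ``$\varphi$ is an isomorphism on the relevant rings after inverting $I$ and $p$-completing, so $\varphi^\ast$ becomes an equivalence'' --- is false for a non-perfect transversal prism $(A,I)$: $\varphi$ on $\hEL(A,I)$ is not bijective. The actual mechanism (and the paper's) is Katz-style: after reducing to $\Lambda=k_\Lambda$, the map $A[\tfrac1I]/p\to A_{\mathrm{perf}}[\tfrac1I]/p$ induces an equivalence of étale sites, and étale $\varphi$-modules are governed by the étale site \cite[Proposition 4.1.1]{10.1007/978-3-540-37802-0_3}; moreover, because $\varphi\otimes\mathrm{id}$ on $A[\tfrac1I]/p\otimes_{\FF_p}k_\Lambda$ is not the absolute Frobenius, one must pass to a power $\varphi^n\otimes\mathrm{id}$ (using finiteness of $k_\Lambda/\FF_p$) and keep track of the original semilinear datum --- a point your sketch skips. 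Relatedly, your parenthetical that every object of $\fX_{\Prism^\circ}$ ``admits a cover by perfect prisms, by taking perfections'' is not right: $(A,I)\to(A_{\mathrm{perf}},IA_{\mathrm{perf}})$ is in general not faithfully flat, so this is not a cofinality-via-covers argument; the comparison of the two limits comes from the categorical equivalence of Lemma \ref{perfection-same-phi-modules} (plus vanishing of $\hEL$ on crystalline perfect prisms, Corollary \ref{perf-sheaf}), not from the topology. Your handling of general coefficients $\Lambda_n$ via $\Lambda_n$-module objects in a $\ZZ/p^m$-linear equivalence is a legitimate alternative to the paper's appeal to \cite[Proposition 3.2]{emerton2001unit} in Lemma \ref{phi-modules-equiv-etale-site}, and the final passage to the limit over $n$ matches the paper.
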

The proof of Theorem \ref{mainthm-1} is formally the same as the one of \cite[Theorem 5.15. (i)]{marks2023prismatic}. Nevertheless, we 
give an overview of the general idea and show how to adapt the main arguments that went into the proof of \emph{loc. cit.} to our setting. We first consider the case $\Lambda=k_\Lambda$ and then use the usual dévissage argument and taking limits to prove the general case.

\begin{lemma}\label{perfection-same-phi-modules}
    Let $(A,I)$ be a transversal prism with perfection $(A_{\mathrm{perf}},IA_\mathrm{perf})$. Base change induces an equivalance of categories
    \[
    \mathrm{Vect}(\hEL(A,I))^{\varphi=1}\cong \mathrm{Vect}(\hEL(A_\mathrm{perf},IA_\mathrm{perf}))^{\varphi=1}.   
    \]
\end{lemma}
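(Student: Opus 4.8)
The plan is to split the structure map $\ELn(A,I)\to\ELn(A_{\mathrm{perf}},IA_{\mathrm{perf}})$ --- the one coming from the perfection morphism of prisms --- into a Frobenius-colimit part, which is automatically invisible to étale $\varphi$-modules, and an $I$-adic completion part, which carries all the content and which one treats by reduction to characteristic $p$ together with Artin--Schreier theory. First I would reduce to a single $\Lambda_n$: exactly as in the proof of Proposition~\ref{vector-bundle-on-hEL} (using surjectivity of the transition maps, finite generation and nilpotence of their kernels, and \cite[\href{https://stacks.math.columbia.edu/tag/0D4B}{Tag 0D4B}]{stacks-project}) one has $\mathrm{Vect}(\hEL(A,I))^{\varphi=1}\cong 2\text{-}\varprojlim_n\mathrm{Vect}(\ELn(A,I))^{\varphi=1}$ and likewise for $A_{\mathrm{perf}}$; since the perfection map induces base-change functors compatible with the transition functors in $n$, and a $2$-limit of equivalences is an equivalence, it suffices to prove the statement with $\hEL$ replaced by $\ELn$ for each fixed $n$.

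Next I would isolate the completion. Writing $A_{\mathrm{perf}}=(\varinjlim_\varphi A)^{\wedge}_{(p,I)}$ and using transversality and $(p,I)$-completeness as in Lemma~\ref{covering-I-completely-flat}, the map $\ELn(A,I)\to\ELn(A_{\mathrm{perf}},IA_{\mathrm{perf}})$ factors through $\varinjlim_\varphi\ELn(A,I)$, the second factor being an appropriate $(p,I)$-adic completion. For the first factor, observe that base change of étale $\varphi$-modules along $\varphi$ is naturally isomorphic to the identity functor on $\mathrm{Vect}(\ELn(A,I))^{\varphi=1}$ --- it sends $(M,f)$ to $f^{-1}\colon M\xrightarrow{\sim}\varphi^{*}M$ --- and $\mathrm{Vect}(-)$ with its morphisms commutes with filtered colimits of rings, so $\mathrm{Vect}(\varinjlim_\varphi\ELn(A,I))^{\varphi=1}$ is a colimit of a tower of equivalences and base change along the first factor is an equivalence. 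Everything thus comes down to showing that base change along the $(p,I)$-adic completion of the $\varphi$-perfect ring $\varinjlim_\varphi\ELn(A,I)$ is an equivalence on étale $\varphi$-module categories.

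For this last point I would run the standard dévissage: killing the finitely generated nilpotent ideal $(\mathfrak m/\mathfrak m^{n})\cdot\ELn(A,I)$ --- the obstruction and rigidity being controlled by Artin--Schreier and the affineness of $\Spec\ELn(A,I)$ --- reduces to the residue field case $\Lambda_n=k_\Lambda$, and similarly for $A_{\mathrm{perf}}$ (again transversal). There $\ELn(A,I)=(A/p)[\tfrac1I]\otimes_{\FF_p}k_\Lambda$ with $\varphi$ acting as the Frobenius of $(A/p)[\tfrac1I]$, so Artin--Schreier gives $\mathrm{Vect}(\ELn(A,I))^{\varphi=1}\cong\mathrm{Loc}_{k_\Lambda}(\Spec((A/p)[\tfrac1I]))$, compatibly with base change, and likewise $\mathrm{Vect}(\ELn(A_{\mathrm{perf}},IA_{\mathrm{perf}}))^{\varphi=1}\cong\mathrm{Loc}_{k_\Lambda}(\Spec((A_{\mathrm{perf}}/p)[\tfrac1I]))$; it then remains to identify the small étale sites of these two spectra. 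Factoring this comparison through $\Spec(\varinjlim_\varphi(A/p)[\tfrac1I])$, the first half is a cofiltered limit of Frobenii, hence a universal homeomorphism, and the étale site is a topological invariant \cite[\href{https://stacks.math.columbia.edu/tag/04DY}{Tag 04DY}]{stacks-project}; the second half is the $I$-adic completion of a perfect $\FF_p$-algebra followed by inverting $I$, and --- identifying $A_{\mathrm{perf}}$ with $A_{\mathrm{inf}}$ of the perfectoid ring $R_\infty:=A_{\mathrm{perf}}/IA_{\mathrm{perf}}$ via Theorem~\ref{perfectprism-perfectoidrings}, so that $(A_{\mathrm{perf}}/p)[\tfrac1I]\cong R_\infty^{\flat}[\tfrac1{\varpi^{\flat}}]$ for a pseudo-uniformizer --- its transparency to the étale site is exactly the (relative) field-of-norms / perfectoid tilting phenomenon. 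This is the main obstacle; I would deduce it from the tilting equivalence of étale sites (cf.\ \cite{scholze2012padic}) together with the comparison of the generic fibres of $A/I$ and $R_\infty$, as in the analogous arguments of \cite{marks2023prismatic} and \cite{wu2021galois}. Assembling the three reductions then yields the asserted equivalence.
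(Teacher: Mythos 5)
Your overall skeleton is the same as the paper's: dévissage in the coefficient ring down to the residue field $k_\Lambda$, an Artin--Schreier/Katz identification of étale $\varphi$-modules with $k_\Lambda$-local systems on a characteristic-$p$ étale site, and then insensitivity of that étale site to perfection, split into a colimit-Frobenius part (harmless, by the $\varphi$-twist argument you give) and an $I$-adic completion part. The paper does exactly this, reducing to the equivalence $S_{\text{ét}}\cong T_{\text{ét}}$ for $S=A[\tfrac{1}{I}]/p$ and $T=A_{\mathrm{perf}}[\tfrac{1}{I}]/p$ via \cite[Proposition 5.4.]{marks2023prismatic}, and concluding with \cite[Proposition 4.1.1]{10.1007/978-3-540-37802-0_3}.

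The gap is in your treatment of what you yourself flag as the main obstacle, the completion step. What is needed there is a purely characteristic-$p$ statement: for the perfect ring $P=\varinjlim_\varphi A/p$ and $g$ a generator of the image of $I$, finite étale covers (equivalently $k_\Lambda$-local systems) of $\Spec P[\tfrac{1}{g}]$ and of $\Spec P^\wedge_g[\tfrac{1}{g}]$ agree. Your proposed deduction --- perfectoid tilting ``together with the comparison of the generic fibres of $A/I$ and $R_\infty$'' --- does not deliver this: tilting only identifies the étale site of the \emph{completed} characteristic-$p$ ring $P^\wedge_g[\tfrac{1}{g}]\cong R_\infty^\flat[\tfrac{1}{\varpi^\flat}]$ with that of the characteristic-$0$ generic fibre of $R_\infty$, and says nothing about the uncompleted ring $P[\tfrac{1}{g}]$; and the generic fibres of $A/I$ and of $R_\infty=A_{\mathrm{perf}}/IA_{\mathrm{perf}}$ do \emph{not} have equivalent étale sites in general --- for the Breuil--Kisin prism $(W(k)[\![u]\!],(E(u)))$ they are (the adic spectra of) $K$ and $\widehat{K}_\infty$, whose absolute Galois groups differ. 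So that route is false as stated, and at best circles back to the original problem. The correct input is the completion-invariance of the (small) étale site for perfect rings after inverting $g$, which is exactly what \cite[Proposition 5.4.]{marks2023prismatic} (resting on Bhatt--Scholze-type results) provides and what the paper cites. A smaller gloss: over $S\otimes_{\FF_p}k_\Lambda$ the relevant endomorphism is $\varphi\otimes\operatorname{id}$, which is \emph{not} the absolute Frobenius of that ring, so Artin--Schreier/Katz does not apply verbatim; the paper handles this by passing to $\varphi^n\otimes\operatorname{id}$ for $n$ a multiple of $[k_\Lambda:\FF_p]$, so that it becomes a power of the absolute Frobenius, and then keeping track of the original $\varphi$-structure --- your sketch skips this point.
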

\begin{proof}
    By $\mathfrak{m}$-adic completeness and arguing with dévissage modulo powers of $\mathfrak{m}$,  we reduce to the case $\Lambda=k_\Lambda$. 
    That is, we have to show that base change induces an equivalence
    \[
        \mathrm{Vect}(A[\tfrac{1}{I}]/p\otimes_{\FF_p}k_\Lambda)^{\varphi=1}\cong \mathrm{Vect}(A_{\mathrm{perf}}[\tfrac{1}{I}]/p\otimes_{\FF_p}k_\Lambda)^{\varphi=1}. 
    \]
    Let $S:=A[\tfrac{1}{I}]/p$ and $T:=A_{\mathrm{perf}}[\tfrac{1}{I}]/p$. As in the proof of \cite[Proposition 5.4.]{marks2023prismatic} one checks 
    that base change induces an equivalence of étale sites $S_{\text{ét}}\cong T_{\text{ét}}$. Since $k_\Lambda/\FF_p$ is finite étale, 
    we see, by localization of both sites, that the étale sites of $S\otimes_{\FF_p} k_\Lambda$ and $T\otimes_{\FF_p}k_\Lambda$ are equivalent as well.
    Let $M$ denote a finite projective $T\otimes_{\FF_p}k_\Lambda$-module with an isomorphism $(\varphi\otimes \operatorname{id})^\ast M\cong M$ of $T\otimes_{\FF_p}k_\Lambda$-modules. 
    Then $(\varphi^n\otimes \operatorname{id})^\ast M \cong M$ for any $n\ge 1$. For a suitable choice of $n$, by finiteness of $k_\Lambda/\FF_p$, 
    $\varphi^n\otimes \operatorname{id}$ acts on $S\otimes_{\FF_p}k_\Lambda$ (and $T\otimes_{\FF_p}k_\Lambda$) via a power of the Frobenius. The result then follows from \cite[Proposition 4.1.1]{10.1007/978-3-540-37802-0_3} by keeping track of the isomorphism $\varphi^\ast M\cong M$.
    \end{proof}
\begin{lemma}\label{phi-modules-equiv-etale-site}
    Let $A$ be a perfect $\FF_p$-algebra such that $W(A)/p^n$ is flat over $\ZZ/p^n$. Let the usual Frobenius lift on $W(A)/p^n$ act $\Lambda_n$-linearly 
    on $W(A)/p^n\otimes_{\ZZ/p^n}\Lambda_n$. There is an equivalence 
    \[
    \mathrm{Vect}(W(A)/p^n\otimes_{\ZZ/p^n}\Lambda_n)^{\varphi=1}\cong\mathrm{Loc}_{\Lambda_n}(A).
    \]
\end{lemma}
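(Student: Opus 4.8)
The plan is to reduce to the classical statement that $\varphi$-modules over a perfect $\FF_p$-algebra $A$ (equivalently, étale $\varphi$-modules over $W(A)$ with $p$ inverted, but here we stay integral and work modulo $p^n$) correspond to étale $\ZZ/p^n$-local systems on $\Spec(A)$, and to bootstrap from $\ZZ/p^n$-coefficients to $\Lambda_n$-coefficients by a dévissage along the $\mathfrak{m}$-adic filtration. First I would handle the base case $\Lambda_n = \ZZ/p^n$: over a perfect ring $A$ with $W(A)/p^n$ flat over $\ZZ/p^n$, the functor sending an étale $\varphi$-module $(M,\varphi_M)$ over $W(A)/p^n$ to its Frobenius-fixed points $M^{\varphi_M=1}$ is an equivalence onto $\mathrm{Loc}_{\ZZ/p^n}(A)$, with quasi-inverse $\mathcal{L}\mapsto \mathcal{L}\otimes_{\ZZ/p^n} W(A)/p^n$. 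This is the usual Artin–Schreier–Witt / Katz-style statement; for $n=1$ it is exactly \cite[Proposition 4.1.1]{10.1007/978-3-540-37802-0_3} (already invoked in Lemma \ref{perfection-same-phi-modules}), and the general $n$ follows either by a short dévissage modulo $p$ (using flatness of $W(A)/p^n$ to see that $p^{n-1}M/p^nM$ etc. behave well) or by the Witt-vector version of the statement.

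Next I would run the dévissage in the $\Lambda_n$-direction. Since $\Lambda_n = \Lambda/\mathfrak{m}^n$ is a finite local $\ZZ_p$-algebra, it has a finite filtration by ideals $\mathfrak{m}^i\Lambda_n$ with successive quotients finite-dimensional $k_\Lambda$-vector spaces, and $k_\Lambda/\FF_p$ is a finite (separable) extension. For the $k_\Lambda$-coefficient case one argues exactly as at the end of the proof of Lemma \ref{perfection-same-phi-modules}: $\varphi\otimes\operatorname{id}$ on $W(A)/p\otimes_{\FF_p}k_\Lambda = (A\otimes_{\FF_p}k_\Lambda)$ becomes an honest power of Frobenius on the perfect ring $A\otimes_{\FF_p}k_\Lambda$ after raising to a suitable power $\varphi^r$ (with $r=[k_\Lambda:\FF_p]$), so $\mathrm{Vect}((A\otimes_{\FF_p}k_\Lambda))^{\varphi=1}$ is computed by the classical equivalence applied to the perfect ring $A\otimes_{\FF_p}k_\Lambda$, combined with the fact (already used in Lemma \ref{perfection-same-phi-modules}) that $(\Spec A\otimes_{\FF_p}k_\Lambda)_{\text{ét}}$ refines $(\Spec A)_{\text{ét}}$ and that descending along the finite étale cover encodes precisely the $k_\Lambda$-structure, giving $\mathrm{Loc}_{k_\Lambda}(A)$. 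Then, knowing the statement for $k_\Lambda$, one deduces it for each $\Lambda_n$ by induction on $n$ (or on the length of $\Lambda_n$): a short exact sequence $0\to \mathfrak{m}^{n-1}\Lambda_n\to \Lambda_n\to \Lambda_{n-1}\to 0$ of $\ZZ/p^n$-modules, with $\mathfrak{m}^{n-1}\Lambda_n$ a $k_\Lambda$-module, induces compatible exact sequences on both sides; the five-lemma-type argument (full faithfulness first, then essential surjectivity by lifting obstruction classes, which vanish because the relevant $H^i$ are controlled by the $\varphi=1$ cohomology and the nilpotence of $\mathfrak{m}$) upgrades the equivalence. Flatness of $W(A)/p^n$ over $\ZZ/p^n$ is exactly what guarantees that $-\otimes_{\ZZ/p^n}\Lambda_n$ is exact so these short exact sequences stay short exact after base change.

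The functor itself is: $(M,\varphi_M)\mapsto M^{\varphi_M=1}$, a sheaf on $\Spec(A)_{\text{ét}}$ (étale-locally free over $\Lambda_n$ by the base case plus the $k_\Lambda$ case), with quasi-inverse $\mathcal{L}\mapsto (\mathcal{L}\otimes_{\Lambda_n} (W(A)/p^n\otimes_{\ZZ/p^n}\Lambda_n),\ \operatorname{id}\otimes\varphi)$; one must check these are mutually inverse, which reduces étale-locally on $\Spec A$ to the trivial $\varphi$-module case, handled by the base case. The main obstacle is the base case made uniform in $n$ together with the flatness bookkeeping: one needs that for a finite projective $W(A)/p^n\otimes_{\ZZ/p^n}\Lambda_n$-module $M$ with $\varphi_M^{\text{lin}}$ an isomorphism, the $\varphi_M$-fixed points form a finite projective $\Lambda_n$-module, étale-locally free, and that $M$ is recovered by base change — this is where one genuinely uses perfectness of $A$ (to trivialize the $\varphi$-module étale-locally, via solving $x^{\#}=\text{unit}\cdot\varphi(x)$, i.e. Lang's theorem / étale descent along Artin–Schreier–Witt covers) and the flatness hypothesis on $W(A)/p^n$. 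Once the base case is pinned down, the two dévissages (in the $p$-direction for $n$, and in the $\mathfrak{m}$-direction for $\Lambda_n$) are formal.
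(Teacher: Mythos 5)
Your plan is essentially the paper's own proof: the paper disposes of the base case $\Lambda_n=\ZZ/p^n$ by citing \cite[Proposition 3.2.7]{kedlaya2015relative} (the Artin--Schreier--Witt/Katz-type equivalence you describe) and then handles general $\Lambda_n$ exactly by the coefficient d\'evissage you outline, arguing as in \cite[Proposition 3.2]{emerton2001unit}, with the $k_\Lambda$-step treated by the same Frobenius-power trick already used in Lemma \ref{perfection-same-phi-modules}. So the proposal is correct and follows the same route, merely spelling out the d\'evissage that the paper delegates to the cited references.
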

\begin{proof}
    The case $\Lambda_n=\ZZ/p^n$ is \cite[Proposition 3.2.7.]{kedlaya2015relative}. In the general case we can argue as in 
    \cite[Proposition 3.2.]{emerton2001unit}.
\end{proof}
\begin{proposition}\label{phi-mod-equiv-lisse}
    Let $(A,I)$ be a transversal prism with perfection $(A_{\mathrm{perf}},IA_\mathrm{perf})$. There are equivalences of categories
    \[
    \mathrm{Vect}(\hEL(A,I))^{\varphi=1}\cong \mathrm{Vect}(\hEL(A_{\mathrm{perf}},IA_\mathrm{perf}))^{\varphi=1}\cong \mathrm{Loc}_\Lambda(A_{\mathrm{perf}}[\tfrac{1}{I}]/p). 
    \]
\end{proposition}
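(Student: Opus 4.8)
The first equivalence is exactly Lemma~\ref{perfection-same-phi-modules}, so the plan is to prove the second one, i.e.\ to identify $\mathrm{Vect}(\hEL(A_{\mathrm{perf}},IA_{\mathrm{perf}}))^{\varphi=1}$ with $\mathrm{Loc}_\Lambda(A_{\mathrm{perf}}[\tfrac1I]/p)$. Write $(C,J):=(A_{\mathrm{perf}},IA_{\mathrm{perf}})$. Since the perfection of a prism is a perfect prism, $C=W(\bar C)$ for the perfect $\FF_p$-algebra $\bar C:=C/p$, and $J=(d)$ for a distinguished element $d$; I take $(C,J)$ to be transversal (the crystalline case $J=(p)$ does not arise, as $(A,I)$ is transversal, and if it did both sides would be $0$). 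Put $A_0:=C[\tfrac1J]/p=\bar C[\tfrac1{\bar d}]$, again a perfect $\FF_p$-algebra. The idea is to pass through the finite levels: first identify each $\ELn(C,J)$ Frobenius-equivariantly with $W(A_0)/p^n\otimes_{\ZZ/p^n}\Lambda_n$, then apply Lemma~\ref{phi-modules-equiv-etale-site} level by level, and finally take the limit over $n$.

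For the identification, one checks (e.g.\ as in the proof of Lemma~\ref{covering-I-completely-flat}, using Lemma~\ref{flat-inverse-system-commute-with-base-change-fp} and transversality, or directly from the fact that $W(\bar C)/p^n$ is $d$-adically complete and the base change is finitely presented over it) that $(C\otimes_{\ZZ_p}\Lambda_n)^\wedge_J=C\otimes_{\ZZ_p}\Lambda_n=W(\bar C)/p^n\otimes_{\ZZ/p^n}\Lambda_n$, using that $\Lambda_n$ is a $\ZZ/p^n$-algebra finitely presented over $\ZZ_p$. It then remains to invert $J$: in $W(\bar C)/p^n$ the element $p$ is nilpotent, so $d$ and its Teichmüller representative $[\bar d]$ differ by a nilpotent and generate the same localization, and for the perfect ring $\bar C$ one has
\[
W(\bar C)/p^n[\tfrac1{[\bar d]}]\;\cong\;W(\bar C[\tfrac1{\bar d}])/p^n\;=\;W(A_0)/p^n ,
\]
which I would justify by comparing the two sides filtration-by-filtration for the $p$-adic filtration: localization is exact, both associated gradeds are $A_0$ in every degree, and the comparison map is the identity in degree $0$. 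Since localization commutes with $-\otimes_{\ZZ/p^n}\Lambda_n$, this gives $\ELn(C,J)\cong W(A_0)/p^n\otimes_{\ZZ/p^n}\Lambda_n$; and $\varphi_\Prism$ corresponds to $\varphi_W\otimes\operatorname{id}$ because the $\delta$-structure Frobenius on $C=W(\bar C)$ is the Witt-vector Frobenius, which extends uniquely along the localization.

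Now Lemma~\ref{phi-modules-equiv-etale-site}, applied to the perfect $\FF_p$-algebra $A_0$ (for which $W(A_0)/p^n$ is flat over $\ZZ/p^n$, since $W(A_0)$ is $p$-torsion free), yields equivalences $\mathrm{Vect}(\ELn(C,J))^{\varphi=1}\cong\mathrm{Vect}(W(A_0)/p^n\otimes_{\ZZ/p^n}\Lambda_n)^{\varphi=1}\cong\mathrm{Loc}_{\Lambda_n}(A_0)$. I would then check they are compatible with the transition maps in $n$ --- base change along $\mathcal{E}_{\Prism,\Lambda_{n+1}}(C,J)\to\ELn(C,J)$ on the left, $\mathcal{L}\mapsto\Lambda_n\otimes_{\Lambda_{n+1}}\mathcal{L}$ on the right --- which follows from the naturality of the constructions of Kedlaya and Emerton used in that proof. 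Taking the $2$-limit over $n$: the left side computes $\mathrm{Vect}(\hEL(C,J))^{\varphi=1}$, by the ($\varphi=1$)-variant of the $2$-limit description established in the proof of Proposition~\ref{vector-bundle-on-hEL}, while $2\text{-}\varprojlim_n\mathrm{Loc}_{\Lambda_n}(A_0)$ is by definition $\mathrm{Loc}_\Lambda(A_0)$ --- a lisse $\Lambda$-local system being precisely a compatible system of $\Lambda_n$-local systems along the base-change functors. Composing gives the asserted equivalence.

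The genuinely computational input is the identification of $\ELn(C,J)$ above; the step I expect to require the most care is the final limit --- ensuring the equivalences of Lemma~\ref{phi-modules-equiv-etale-site} are natural enough in $n$ that the $2$-limit on the étale side reproduces $\mathrm{Loc}_\Lambda(A_0)$ on the nose rather than some coarser inverse limit, and that on the prismatic side this $2$-limit agrees with the one appearing in Proposition~\ref{vector-bundle-on-hEL}.
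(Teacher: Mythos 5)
Your proposal is correct and follows essentially the same route as the paper, whose proof of the second equivalence is precisely ``Lemma \ref{phi-modules-equiv-etale-site} and taking limits''; you merely make explicit the identification $\ELn(A_{\mathrm{perf}},IA_{\mathrm{perf}})\cong W(A_{\mathrm{perf}}[\tfrac{1}{I}]/p)/p^n\otimes_{\ZZ/p^n}\Lambda_n$ and the compatibility of the level-$n$ equivalences in the $2$-limit, which the paper leaves implicit.
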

\begin{proof}
    The first equivalence is Lemma \ref{perfection-same-phi-modules}. The second equivalence follows from Lemma \ref{phi-modules-equiv-etale-site} and taking limits.
\end{proof}
\begin{remark}\label{perctiond-fina-object}
    Let $R$ be a perfectoid ring. By Theorem \ref{perfectprism-perfectoidrings} the corresponding perfect prism is given by 
    $(A_{\mathrm{inf}}(R),\mathrm{ker}(\theta))$ and is the final object of $(R)_\Prism^{\mathrm{perf}}$ by \cite[Lemma 4.8.]{bhatt2022prisms}.
\end{remark}
\begin{corollary}\label{thm1-for-perfectoid-rings}
    Let $R$ be a perfectoid ring. There are equivalences of categories
    \[
    \mathrm{Vect}(R_{\Prism^\circ},\hEL)^{\varphi=1}\cong \mathrm{Vect}(\hEL(A_{\mathrm{inf}}(R),\ker\theta))^{\varphi=1} \cong \mathrm{Loc}_\Lambda(R[\tfrac{1}{p}]).
    \]
    Moreover, for any $\mathcal{M}\in\mathrm{Vect}(R_{\Prism^\circ},\hEL)^{\varphi=1}$ with corresponding local system $\mathcal{L}$, there is a quasi-isomorphism\[
        R\Gamma(R_{\Prism^\circ},\mathcal{M})^{\varphi=1}\cong R\Gamma(R[\tfrac{1}{p}]_{\text{proét}},\mathcal{L}).
    \]
\end{corollary}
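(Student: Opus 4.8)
The plan is to reduce both equivalences and the cohomology comparison to statements about the single perfect prism $(A_{\mathrm{inf}}(R),\ker\theta)$, using that for a perfectoid $R$ the transversal perfect site $R_{\Prism^\circ}^{\mathrm{perf}}$ has a final object. For the equivalences: perfection $(A,I)\mapsto(A_{\mathrm{perf}},IA_{\mathrm{perf}})$ is left adjoint to the fully faithful inclusion $R_{\Prism^\circ}^{\mathrm{perf}}\hookrightarrow R_{\Prism^\circ}$ (every map of $\delta$-rings into a perfect one factors uniquely through the perfection of the source, and perfection preserves transversality, cf.\ \cite{bhatt2022absolute}), and by Lemma \ref{perfection-same-phi-modules} the fibered category $(A,I)\mapsto\mathrm{Vect}(\hEL(A,I))^{\varphi=1}$ is naturally pulled back along perfection from $R_{\Prism^\circ}^{\mathrm{perf}}$; hence the limit of Definition \ref{definition-(lambda,f)-crystals} may be computed over $R_{\Prism^\circ}^{\mathrm{perf}}$, where by Remark \ref{perctiond-fina-object} it collapses to the value $\mathrm{Vect}(\hEL(A_{\mathrm{inf}}(R),\ker\theta))^{\varphi=1}$ at the final object (if $R$ is not $p$-torsion free it is a perfect $\FF_p$-algebra, so $R[\tfrac{1}{p}]=0$, $R_{\Prism^\circ}=\emptyset$, and there is nothing to prove). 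This is the first equivalence. For the second, apply Proposition \ref{phi-mod-equiv-lisse} to the already perfect prism $(A_{\mathrm{inf}}(R),\ker\theta)$, obtaining $\mathrm{Vect}(\hEL(A_{\mathrm{inf}}(R),\ker\theta))^{\varphi=1}\cong\mathrm{Loc}_\Lambda(A_{\mathrm{inf}}(R)[\tfrac{1}{\ker\theta}]/p)$, identify $A_{\mathrm{inf}}(R)[\tfrac{1}{\ker\theta}]/p\cong R^\flat[\tfrac{1}{\varpi^\flat}]$ with the tilt of the Tate ring $R[\tfrac{1}{p}]$, and conclude with Scholze's tilting equivalence of étale topoi \cite{scholze2012padic} (plus the identification of the scheme-theoretic and adic étale sites of an affinoid perfectoid) that $\mathrm{Loc}_\Lambda(R^\flat[\tfrac{1}{\varpi^\flat}])\cong\mathrm{Loc}_\Lambda(R[\tfrac{1}{p}])$.

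For the cohomology statement, write $\mathcal{M}$ for a $(\Lambda,F)$-crystal, $M:=\mathcal{M}(A_{\mathrm{inf}}(R),\ker\theta)$ for the corresponding étale $\varphi$-module over $\hEL(A_{\mathrm{inf}}(R),\ker\theta)$, and $\mathcal{L}$ for the associated local system; then $R\Gamma(R_{\Prism^\circ},\mathcal{M})^{\varphi=1}=\mathrm{fib}\!\bigl(\varphi-1\colon R\Gamma(R_{\Prism^\circ},\mathcal{M})\to R\Gamma(R_{\Prism^\circ},\mathcal{M})\bigr)$. First I would reduce to the perfect site in degree $0$: the Čech-acyclicity of $\hEL$ from Proposition \ref{hEl-sheaf} holds verbatim over $R_{\Prism^\circ}^{\mathrm{perf}}$ (cf.\ Corollary \ref{perf-sheaf}), so $\hEL$ has no higher derived cohomology on any object there, and since that site has a final object a vector bundle on $(R_{\Prism^\circ}^{\mathrm{perf}},\hEL)$ is pulled back from it (remark after Proposition \ref{vector-bundle-on-hEL}); computing its cohomology by the Čech complex of a trivializing cover $\mathcal{U}$ then gives $M\otimes^{\mathbb{L}}_{\hEL(A_{\mathrm{inf}}(R),\ker\theta)}\check{C}^\bullet(\mathcal{U},\hEL)\simeq M$, concentrated in degree $0$. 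The restriction $R\Gamma(R_{\Prism^\circ},\mathcal{M})\to R\Gamma(R_{\Prism^\circ}^{\mathrm{perf}},\mathcal{M})$ is not an isomorphism, but it becomes one after applying $\mathrm{fib}(\varphi-1)$: this is the derived form of Lemma \ref{perfection-same-phi-modules}, proved by the same dévissage to $\Lambda=k_\Lambda$ together with the fact that perfection induces an equivalence of étale sites, which is invisible to $\varphi$-cohomology. Hence $R\Gamma(R_{\Prism^\circ},\mathcal{M})^{\varphi=1}\simeq\mathrm{fib}(\varphi_M-1\colon M\to M)$.

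It remains to identify $\mathrm{fib}(\varphi_M-1\colon M\to M)$ with pro-étale cohomology: the relative Witt-vector Artin-Schreier comparison underlying Lemma \ref{phi-modules-equiv-etale-site}, in its derived form, identifies it with $R\Gamma\!\bigl((A_{\mathrm{inf}}(R)[\tfrac{1}{\ker\theta}]/p)_{\text{ét}},\mathcal{L}\bigr)=R\Gamma\!\bigl((R^\flat[\tfrac{1}{\varpi^\flat}])_{\text{ét}},\mathcal{L}\bigr)$, and tilting together with the comparison of étale and pro-étale cohomology for lisse sheaves rewrites this as $R\Gamma(R[\tfrac{1}{p}]_{\text{proét}},\mathcal{L})$. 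The one point that genuinely requires care — and which is carried out exactly as in \cite{bhatt2023prismatic} and \cite{marks2023prismatic} — is the first reduction above, namely that $\mathrm{fib}(\varphi-1)$ does not see the passage from $R_{\Prism^\circ}$ to $R_{\Prism^\circ}^{\mathrm{perf}}$ and that the perfect-site cohomology of a crystal is concentrated in degree $0$; this is where the transversality hypothesis and the acyclicity of $\hEL$ established in Proposition \ref{hEl-sheaf} are used in an essential way, while everything downstream is a repackaging of the classical theory of étale $\varphi$-modules and of Scholze's tilting equivalence.
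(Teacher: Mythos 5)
Your overall route is the intended one: the paper's own proof is just a citation of the analogues in \cite{marks2023prismatic} (Proposition 5.13 and Corollary 5.14), and those arguments, like yours, evaluate at the perfect prism of $R$, identify étale $\varphi$-modules over $A_{\mathrm{inf}}(R)[\tfrac{1}{\ker\theta}]/p\cong R^\flat[\tfrac{1}{\varpi^\flat}]$ with $\Lambda_n$-local systems via Lemma \ref{phi-modules-equiv-etale-site} and a limit/dévissage, and finish with tilting and the comparison of scheme-theoretic, adic étale and pro-étale sites. The category-level half of your argument (cofinality of perfection plus Lemma \ref{perfection-same-phi-modules}, then collapse at the final object of $R^{\mathrm{perf}}_{\Prism^\circ}$) is sound in principle, although it silently uses that the perfection of a transversal prism over $R$ is again transversal, for which you give no precise reference.

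The genuine gap is in the cohomological half, exactly at the step you flag: the identification $R\Gamma(R_{\Prism^\circ},\mathcal{M})^{\varphi=1}\simeq \mathrm{fib}\bigl(\varphi_M-1\colon M\to M\bigr)$ with $M:=\mathcal{M}(A_{\mathrm{inf}}(R),\ker\theta)$. Your bridge --- that the restriction $R\Gamma(R_{\Prism^\circ},\mathcal{M})\to R\Gamma(R^{\mathrm{perf}}_{\Prism^\circ},\mathcal{M})$ is not an isomorphism but becomes one after applying $\mathrm{fib}(\varphi-1)$, ``this being the derived form of Lemma \ref{perfection-same-phi-modules}'' --- is an assertion, not an argument: Lemma \ref{perfection-same-phi-modules} is a statement about the categories $\mathrm{Vect}(-)^{\varphi=1}$ and gives no control whatsoever on sheaf cohomology of the two sites, and you offer no mechanism for why $\mathrm{fib}(\varphi-1)$ should erase the difference; the intermediate claim that the restriction map fails to be an equivalence is itself unsupported (and in fact false). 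The clean fix, which is also how the cited proofs proceed, is that $(A_{\mathrm{inf}}(R),\ker\theta)$ is final not merely in $R_{\Prism}^{\mathrm{perf}}$ (Remark \ref{perctiond-fina-object}) but in all of $R_{\Prism}$, by the universal property of $A_{\mathrm{inf}}$ underlying \cite[Lemma 4.8]{bhatt2022prisms}; in the only nontrivial case $R$ is $p$-torsion free, so this prism is transversal and is then the final object of $R_{\Prism^\circ}$ as well. Consequently $R\Gamma(R_{\Prism^\circ},\mathcal{M})\cong R\Gamma((A_{\mathrm{inf}}(R),\ker\theta),\mathcal{M})\cong M$ concentrated in degree $0$, using $R\Gamma((A,I),\mathcal{M})\cong \mathcal{M}(A,I)\otimes^{\mathbb{L}}_{\hEL(A,I)}R\Gamma((A,I),\hEL)$ together with Proposition \ref{hEl-sheaf} (this is precisely the acyclicity lemma the paper proves later for $(\mathcal{O}_K)_{\Prism^\circ}$). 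Applying $\mathrm{fib}(\varphi-1)$ then yields the desired reduction, and the same finality makes your detour through $R^{\mathrm{perf}}_{\Prism^\circ}$ (and the transversality-of-perfection point) unnecessary for the first equivalence, since the limit in Definition \ref{definition-(lambda,f)-crystals} collapses at once. From there your remaining steps --- derived Artin--Schreier--Witt with $\Lambda_n$-coefficients, dévissage and $R\lim$, tilting, and the étale/pro-étale comparison for lisse sheaves --- are the right ones and match the cited arguments.
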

\begin{proof}
This is proven analogouly to \cite[Proposition 5.13.]{marks2023prismatic} and \cite[Corollary 5.14.]{marks2023prismatic}.
\end{proof}
\begin{proof}[Proof (of Theorem \ref{mainthm-1}).]
    Let $X$ denote an analytic adic space over $\mathrm{Spa}(\QQ_p,\ZZ_p)$. By \cite[Lemma 15.6.]{scholze2022etale}, $X$ can be viewed as a 
    locally spatial diamond.  By \cite[Proposition 3.7.]{mann2020local}, \cite[Proposition 3.9.]{mann2020local} and discreteness of $\Lambda_n$, 
    pullback along $X_v \to X_{\text{ét}}$ induces an equivalence $\mathrm{Loc}_{\Lambda_n}(X_{\text{ét}})  \cong \mathrm{Loc}_{\Lambda_n}(X_{v})$ 
    where $X_{v}$ denotes the v-site of $X$. Taking limits on both sides it then follows
    \begin{align}
        \mathrm{Loc}_{\Lambda}(X_{\text{ét}})  \cong \lim_n \mathrm{Loc}_{\Lambda_n}(X_{v}) \cong \mathrm{Loc}_{\Lambda}(X_{v})\label{etale-v-equiv}
    \end{align}
    where the second equivalence is \cite[Proposition 3.5.]{mann2020local}. We have
    \begin{align*}
        \mathrm{Vect}(\fX_{\Prism^\circ},\hEL)^{\varphi=1} \cong \lim_{(A,I)\in \fX^{\mathrm{perf}}_{\Prism}}\mathrm{Vect}(\hEL(A,I))^{\varphi=1} \cong \lim_{\substack{\mathrm{Spf}(A)\to \fX \\ A \text{ perfectoid}}} \mathrm{Loc}_{\Lambda}(A[\tfrac{1}{p}]_{\text{ét}})
    \end{align*}
    by Proposition \ref{phi-mod-equiv-lisse} and Theorem \ref{perfectprism-perfectoidrings}. On the other hand,
    \begin{align*}
        \lim_{\substack{\mathrm{Spf}(A)\to \fX \\ A \text{ perfectoid}}} \mathrm{Loc}_{\Lambda}(A[\tfrac{1}{p}]_{\text{ét}})\cong  \lim_{\substack{\mathrm{Spf}(A)\to \fX \\ A \text{ perfectoid}}} \mathrm{Loc}_{\Lambda}(A[\tfrac{1}{p}]_{v})\cong\mathrm{Loc}_{\Lambda}(X_{v})\cong\mathrm{Loc}_{\Lambda}(X_{\text{ét}})
    \end{align*}
where the first and last equivalences are by \ref{etale-v-equiv} and the second equivalence by \cite[Proposition 15.4.]{scholze2022etale}.
\end{proof}
\begin{definition}
The functor $T^\Lambda_\fX: \mathrm{Vect}(\fX_{\Prism^\circ},\hEL)^{\varphi=1} \cong \mathrm{Loc}_\Lambda(\mathfrak{X}_\eta)$ in Theorem \ref{mainthm-1} is called the étale realization functor.
\end{definition}
Following the proof of \cite[Theorem 5.15. (ii)]{marks2023prismatic}, which is formally identical to the proof of Theorem \ref{mainthm-1}, we have the following étale comparison isomorphism.
\begin{theorem}\label{prismatic-proetale-comparison}
Let $\mathcal{M}\in \mathrm{Vect}(\fX_{\Prism^\circ},\hEL)^{\varphi=1}$. There is a quasi-isomorphism\[
R\Gamma(\fX_{\Prism^\circ},\mathcal{M})^{\varphi=1}\cong R\Gamma(\fX_{\eta,\text{proét}},T^\Lambda_\fX(\mathcal{M})).
\]
\end{theorem}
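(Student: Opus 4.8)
The plan is to imitate the proof of \cite[Theorem~5.15(ii)]{marks2023prismatic}, which is formally parallel to that of Theorem~\ref{mainthm-1}: I would present each side as a cohomology computed by descent from the affine perfectoid formal schemes $\Spf(R)\to\fX$, on which the comparison is precisely the affine case already recorded in Corollary~\ref{thm1-for-perfectoid-rings}, and then glue. Two formal facts drive this. First, forming derived $\varphi=1$-invariants, i.e. $\mathrm{fib}(\varphi-1)$ --- which is what $R\Gamma(-)^{\varphi=1}$ denotes --- commutes with limits. Second, the transversal perfect prisms, equivalently the affine perfectoids $\Spf(R)\to\fX$, are a basis of $\fX_{\Prism^\circ}$ (the input already used in the proof of Theorem~\ref{mainthm-1}), while their adic generic fibres $v$-cover the locally spatial diamond $\fX_\eta$.

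On the prismatic side I would argue with the perfect prismatic site as in the proof of Theorem~\ref{mainthm-1}: using Corollary~\ref{perf-sheaf}, the vanishing of higher cohomology of $\hEL$ (Proposition~\ref{hEl-sheaf}), and the fact that $(A_{\mathrm{inf}}(R),\ker\theta)$ is the final object of $(R)_\Prism^{\mathrm{perf}}$ (Remark~\ref{perctiond-fina-object}), one sees that $\mathcal{M}$ is acyclic over each such $R$, with $R\Gamma(R_{\Prism^\circ},\mathcal{M})=\mathcal{M}(A_{\mathrm{inf}}(R),\ker\theta)$ in degree $0$, and that $R\Gamma(\fX_{\Prism^\circ},\mathcal{M})$ is recovered from these by descent along a perfectoid cover of $\fX_{\Prism^\circ}$. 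Since $\mathrm{fib}(\varphi-1)$ commutes with the resulting limit and is compatible with restriction, this gives
\begin{align*}
R\Gamma(\fX_{\Prism^\circ},\mathcal{M})^{\varphi=1}
&\cong \varprojlim_{\substack{\Spf(R)\to\fX\\ R\text{ perfectoid}}} R\Gamma(R_{\Prism^\circ},\mathcal{M})^{\varphi=1} \\
&\cong \varprojlim_{\substack{\Spf(R)\to\fX\\ R\text{ perfectoid}}} R\Gamma(R[\tfrac1p]_{\text{proét}},\mathcal{L}_R),
\end{align*}
the second step by Corollary~\ref{thm1-for-perfectoid-rings}, with $\mathcal{L}_R$ the pullback of $T^\Lambda_\fX(\mathcal{M})$ to the generic fibre of $\Spf(R)$; by the construction of $T^\Lambda_\fX$ in the proof of Theorem~\ref{mainthm-1} these identifications are natural in $R$.

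On the étale side I would, exactly as in the proof of Theorem~\ref{mainthm-1}, first pass from $\fX_{\eta,\text{proét}}$-cohomology of the lisse sheaf $T^\Lambda_\fX(\mathcal{M})$ to its $v$-cohomology (with $\Lambda_n$-coefficients by discreteness of $\Lambda_n$ and \cite[Propositions~3.7 and~3.9]{mann2020local}, then taking $R\varprojlim$ over $n$), and then apply $v$-descent (\cite[Proposition~15.4]{scholze2022etale}) along the $v$-cover of $\fX_\eta$ by the generic fibres of the $\Spf(R)\to\fX$ with $R$ perfectoid. This presents $R\Gamma(\fX_{\eta,\text{proét}},T^\Lambda_\fX(\mathcal{M}))$ as a limit over the same diagram of the complexes $R\Gamma(R[\tfrac1p]_{\text{proét}},\mathcal{L}_R)$, so matching it termwise with the display above --- and using that the equivalences of Corollary~\ref{thm1-for-perfectoid-rings} are natural in $R$, hence assemble into an equivalence of the Čech/pro-systems computing the two limits --- yields the asserted quasi-isomorphism.

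The step I expect to be the main obstacle is the bookkeeping needed to upgrade the affine statement of Corollary~\ref{thm1-for-perfectoid-rings} to the limits: one must organise both sides honestly as totalizations of (the Čech nerve of) one fixed perfectoid cover of $\fX$, rather than as unstructured limits, and verify that all chosen equivalences commute with restriction along the face maps. A subsidiary point needing care is the cohomological reduction of $R\Gamma(\fX_{\Prism^\circ},-)$ to the perfect prismatic site and its compatibility with inverting $I$, with $p$-completion, and with the $\mathfrak{m}$-adic limit over $n$; this is handled just as in the proof of Proposition~\ref{hEl-sheaf}, but now rerun for the vector bundle $\mathcal{M}$ in place of $\hEL$. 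Beyond these, nothing is new relative to \cite{bhatt2023prismatic,marks2023prismatic}; the sole genuine adaptation is carrying the coefficient ring $\Lambda$ through all the arguments, which is harmless since dévissage along $\mathfrak{m}$ reduces everything to the finite field $k_\Lambda$.
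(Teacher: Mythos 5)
Your proposal is correct and follows essentially the same route as the paper, which proves this theorem exactly by rerunning the descent argument of Theorem \ref{mainthm-1} on the level of cohomology (following \cite[Theorem 5.15 (ii)]{marks2023prismatic}), with the affine perfectoid comparison of Corollary \ref{thm1-for-perfectoid-rings} as the key input and $\varphi$-fixed points commuting with the limits.
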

Let $K/\QQ_p$ be a $p$-adic field. We write $G_K$ for the absolute Galois group of $K$ and $\mathrm{Rep}_{\ZZ_p}(G_K)$ for the category of (continuous) representations of $G_K$ on finite projective $\ZZ_p$-modules. Let $K_\infty/K$ denote a $p$-adic Lie extension of $K$ with Galois group $\mathrm{Gal}(K_\infty/K)=\ZZ_p^d$ for some $d\ge 1$ and choose a tower of finite Galois extensions\[    
K=K_1\subset K_2\subset K_3 \subset \dots \subset K_n \subset K_\infty
\]
with $K_\infty=\cup_{n\ge 1}K_n$.
\begin{definition}
Let $V\in \mathrm{Rep}_{\ZZ_p}(G_K)$. The Iwasawa cohomology of $K_\infty/K$ with values in $V$ is defined as the inverse limit\[
H_{\text{Iw}}^{\bullet}(K_\infty/K,V):=\varprojlim_n H^\bullet(G_{K_n},V)
\] 
where $H^\bullet(G_{K_n},V)$ denotes continuous group cohomology and the transitions maps are given by cohomological corestriction maps.\footnote{A simple cofinality 
argument shows that $H_{\text{Iw}}^{\bullet}(K_\infty/K,V)$ does not depend on the choice of $(K_n)_{n\ge 1}$.}
\end{definition}
As an application of Theorem \ref{prismatic-proetale-comparison} we have the following comparison between prismatic cohomology and Iwasawa cohomology. 
Let $H_\varphi^{\bullet}((\mathcal{O}_K)_{\Prism^\circ},-)$ denote the cohomology of the complex $R\Gamma((\mathcal{O}_K)_{\Prism^\circ},-)^{\varphi=1}$.
\begin{theorem}
Given a Galois extension $K_\infty/K$ with Galois group $\Gamma$ isomorphic to $\ZZ_p^d$ for some $d\ge 1$, let $\Lambda:=\ZZ_p[\![\Gamma]\!]$ denote the completed group algebra over $\ZZ_p$. There exists a functor 
$\Prism_{\mathrm{Iw}}:\mathrm{Rep}_{\ZZ_p}(G_K)\to \mathrm{Vect}((\mathcal{O}_K)_{\Prism^\circ},\hEL)^{\varphi=1}$ such that 
\[
    H_{\mathrm{Iw}}^{\bullet}(K_\infty/K,V) \cong H_\varphi^{\bullet}((\mathcal{O}_K)_{\Prism^\circ},\Prism_{\mathrm{Iw}}(V)).
\]
\end{theorem}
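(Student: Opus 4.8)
The plan is to define $\Prism_{\mathrm{Iw}}$ by transporting the ``Iwasawa deformation'' of $V$ through the étale realization equivalence of Theorem~\ref{mainthm-1} applied to $\fX=\Spf(\mathcal{O}_K)$, and then to extract the cohomological comparison from Theorem~\ref{prismatic-proetale-comparison} together with Shapiro's lemma. Note first that $\Lambda=\ZZ_p[\![\Gamma]\!]\cong\ZZ_p[\![T_1,\dots,T_d]\!]$ is a complete noetherian local ring with residue field $\FF_p$, so we are in the situation of Hypothesis~\ref{hypothesis-2}, and that $(\mathcal{O}_K)_\eta=\Spa(K,\mathcal{O}_K)$ is the diamond attached to a point; hence $\mathrm{Loc}_\Lambda((\mathcal{O}_K)_\eta)$ is canonically the category $\mathrm{Rep}_\Lambda(G_K)$ of continuous $\Lambda$-linear representations of $G_K=\pi_1^{\mathrm{\acute et}}(\Spa K)$ on finite projective $\Lambda$-modules, and $R\Gamma((\mathcal{O}_K)_{\eta,\mathrm{pro\acute et}},-)$ is computed by continuous group cohomology $R\Gamma(G_K,-)$ (via the identification of $\mathrm{Loc}_\Lambda$ with lisse $\widehat\Lambda$-sheaves on the pro-étale site recalled in the Notation block, together with the fact that $\Spa(K)_{\mathrm{pro\acute et}}$ is the classifying site of the profinite group $G_K$).

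To $V\in\mathrm{Rep}_{\ZZ_p}(G_K)$ associate $\mathbf{D}_{\mathrm{Iw}}(V):=\Lambda\otimes_{\ZZ_p}V$, a finite free $\Lambda$-module carrying the diagonal continuous $G_K$-action in which $G_K$ acts on $\Lambda$ through $G_K\twoheadrightarrow\Gamma\hookrightarrow\Lambda^\times$ and on $V$ through $\rho_V$; this is $\Lambda$-linear and finite projective, hence an object of $\mathrm{Rep}_\Lambda(G_K)=\mathrm{Loc}_\Lambda((\mathcal{O}_K)_\eta)$, functorially in $V$. Define
\[
\Prism_{\mathrm{Iw}}(V):=\bigl(T^\Lambda_{\Spf(\mathcal{O}_K)}\bigr)^{-1}\bigl(\mathbf{D}_{\mathrm{Iw}}(V)\bigr)\in\mathrm{Vect}((\mathcal{O}_K)_{\Prism^\circ},\hEL)^{\varphi=1};
\]
this makes sense and is a functor because $T^\Lambda_{\Spf(\mathcal{O}_K)}$ is an equivalence (Theorem~\ref{mainthm-1}) and $V\mapsto\mathbf{D}_{\mathrm{Iw}}(V)$ is functorial. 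The point of working with $\Lambda\otimes_{\ZZ_p}V$ for $V$ finite free over $\ZZ_p$ is precisely that $\mathbf{D}_{\mathrm{Iw}}(V)$ then lands in the heart $\mathrm{Vect}(\cdot)^{\varphi=1}$ rather than in a larger derived category.

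For the cohomology, apply Theorem~\ref{prismatic-proetale-comparison} with $\fX=\Spf(\mathcal{O}_K)$ and $\mathcal{M}=\Prism_{\mathrm{Iw}}(V)$ and combine with the identifications of the first paragraph to obtain a quasi-isomorphism
\[
R\Gamma((\mathcal{O}_K)_{\Prism^\circ},\Prism_{\mathrm{Iw}}(V))^{\varphi=1}\;\cong\;R\Gamma(G_K,\Lambda\otimes_{\ZZ_p}V).
\]
Now write $\Lambda=\varprojlim_m\ZZ_p[\Gamma/\Gamma_m]$ with $\Gamma_m=\mathrm{Gal}(K_\infty/K_m)$, so that $\Lambda\otimes_{\ZZ_p}V=\varprojlim_m\ZZ_p[\Gamma/\Gamma_m]\otimes_{\ZZ_p}V$. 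Since continuous cochains commute with these inverse limits, $R\Gamma(G_K,\Lambda\otimes_{\ZZ_p}V)\cong R\varprojlim_m R\Gamma(G_K,\ZZ_p[\Gamma/\Gamma_m]\otimes_{\ZZ_p}V)$, and Shapiro's lemma identifies the $m$-th term with $R\Gamma(G_{K_m},V)$, the transition maps becoming cohomological corestrictions. Passing to cohomology and using that the $R^1\varprojlim_m$-terms vanish — the groups $H^i(G_{K_m},V)$ are finitely generated over $\ZZ_p$ by local Tate cohomology and the corestriction system is Mittag–Leffler — yields $H^i_\varphi((\mathcal{O}_K)_{\Prism^\circ},\Prism_{\mathrm{Iw}}(V))\cong\varprojlim_m H^i(G_{K_m},V)=H^i_{\mathrm{Iw}}(K_\infty/K,V)$, as desired; independence of the tower $(K_m)$ is the cofinality argument of the footnote to the definition.

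The main obstacle is the package of formal translations in the first paragraph — in particular verifying that pro-étale cohomology of the point-like diamond $(\mathcal{O}_K)_\eta$ with lisse $\widehat\Lambda$-coefficients is genuinely continuous $G_K$-cohomology — together with the Shapiro-plus-limit step at the end. The latter is classical (cf.\ \cite{wu2021galois}, \cite{steingart2022iwasawa}), but one should be careful that $\ZZ_p[\Gamma/\Gamma_m]\otimes_{\ZZ_p}V$ with its diagonal action really is $\mathrm{Ind}_{G_{K_m}}^{G_K}(\mathrm{Res}_{G_{K_m}}V)$ so that Shapiro applies, and that the transition maps induced by $\Gamma/\Gamma_{m+1}\to\Gamma/\Gamma_m$ are indeed corestriction. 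Everything else — functoriality, and the comparison isomorphism itself — is inherited from the equivalences of Theorem~\ref{mainthm-1} and Theorem~\ref{prismatic-proetale-comparison} already established.
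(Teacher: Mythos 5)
Your proposal is correct and follows essentially the same route as the paper's own proof: define $\Prism_{\mathrm{Iw}}(V)$ as the image of $\Lambda\otimes_{\ZZ_p}V$ (with the diagonal action, $G_K$ acting on $\Lambda$ through $\Gamma$) under a chosen quasi-inverse of $T^\Lambda_{\mathrm{Spf}(\mathcal{O}_K)}$ from Theorem \ref{mainthm-1}, then invoke Theorem \ref{prismatic-proetale-comparison}. The only difference is that where the paper simply cites (the proof of) \cite[Lemma 5.8.]{schneider2015coateswiles} for the identification $H_{\mathrm{Iw}}^{\bullet}(K_\infty/K,V)\cong H^{\bullet}(G_K,\Lambda\otimes_{\ZZ_p}V)$, you unpack that step by hand via Shapiro's lemma and an $R\varprojlim$ argument, which is precisely the content of the cited lemma.
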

\begin{proof}
It is well known that $\Lambda$ is isomorphic to $\ZZ_p[\![X_1,\dots,X_n]\!]$, hence we are in the situation of Hypothesis \ref{hypothesis-2}. Note that $\Lambda$ admits an action of $G_K$ via left multiplication 
via the quotient map $G_K\twoheadrightarrow \Gamma$. By (the proof of) \cite[Lemma 5.8.]{schneider2015coateswiles} we have 
\[
    H_{\text{Iw}}^{\bullet}(K_\infty/K,V)\cong H^{\bullet}(G_K,\Lambda\otimes_{\ZZ_p}V),
\]
where $G_K$ acts diagonally on $\Lambda\otimes_{\ZZ_p}V$. 
Since $\Lambda\otimes_{\ZZ_p}V \in \mathrm{Rep}_{\Lambda}(G_K)$, by Theorem \ref{mainthm-1}, there exsits a $(\Lambda,F)$-crystal $\mathcal{M}$ on $(\mathcal{O}_K)_{\Prism^\circ}$ such that\[
    H_{\text{Iw}}^{\bullet}(K_\infty/K,V)\cong H^{\bullet}(G_K,\Lambda\otimes_{\ZZ_p}V)\cong   H_\varphi^{\bullet}((\mathcal{O}_K)_{\Prism^\circ},\mathcal{M})
\]
by Theorem \ref{prismatic-proetale-comparison}. After fixing a choice of a quasi-inverse functor $G$ of $T^\Lambda_{\mathrm{Spf}(\mathcal{O}_K)}$, one can simply define $\Prism_{\text{Iw}}(V):=G(\Lambda\otimes_{\ZZ_p}V)$, which is functorial in $V$.
\end{proof}
\section{Families of \texorpdfstring{$(\varphi,\Gamma)$-modules}{(varphi,Gamma)-modules} and prismatic \texorpdfstring{$(\Lambda,F)$}{(Λ, F)}-crystals}\label{main-section3}
In the first part of this section we apply the results of \cite{wu2021galois} to Theorem \ref{mainthm-1} and give an alternative proof of 
the main theorem in \cite{article}. In the second part, we recover the quasi-isomorphism between Galois cohomology and the Herr complex proven in \cite{article}.
\begin{hypothesis}
Throughout §\ref{main-section3} we let $(\Lambda,\mathfrak{m})$ be as in Hypothesis §\ref{main-section2}. Moreover, we fix a perfect field $k$ of 
characteristic $p$, define $K':=W(k)[1/p]$ and let $K/K'$ denote a finite totally ramified extension. Let $\CC_p$ be the completion of an 
algebraic closure of $K$.
\end{hypothesis}
\begin{example} [{\cite{wu2021galois}}]
Let $(A_\mathrm{inf}(\mathcal{O}_{\CC_p}),\ker(\theta))$ denote the (transversal) perfect prism corresponding to the perfectoid ring 
$\mathcal{O}_{\CC_p}$ under the equivalence in Theorem \ref{perfectprism-perfectoidrings}. Let $(\zeta_{p^n})\subset \CC_p$ be a 
compatible system of $p$-power roots of unity and $\epsilon:=(1,\zeta_p,\zeta_{p^2},\dots)\in \mathcal{O}_{\CC_p^\flat}$.
By the theory of field of norms one can associate to the cyclotomic tower \[
K\subset K(\zeta_p) \subset K(\zeta_{p^2})\subset \dots K_\infty:=K(\zeta_{p^\infty})
\]
the field of norms $\mathbf{E}_K$, which is contained in $\hat{K}_\infty^\flat$ and admits a $\varphi$-stable Cohen ring $\mathbf{A}_K$
contained in $W(\CC_p^\flat)$. $\mathbf{A}_K$ contains the ring $W(k)[\![q-1]\!]$, with the element $q$ being mapped 
to $[\epsilon]\in W(\CC_p^\flat)$. Let $[p]_q:=\frac{q^p-1}{q-1}\in W(k)[\![q-1]\!]$ and 
$\mathbf{A}_K^+:=\mathbf{A}_K\cap A_\text{inf}(\mathcal{O}_{\CC_p})$. Then $(\mathbf{A}_K^+, (\varphi^n([p]_q)))$ is a prism for 
every $n\ge 1$ \cite[Lemma 2.5]{wu2021galois}.
\end{example}
\begin{lemma}\label{final-object}
    There exsits $n\ge 1$ such that  $(\mathbf{A}_K^+, (\varphi^n([p]_q)))\in (\mathcal{O}_K)_{\Prism^\circ}$.
\end{lemma}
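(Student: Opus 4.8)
By \cite[Lemma~2.5]{wu2021galois} the pair $(\fA_K^+,(\varphi^n([p]_q)))$ is a bounded prism for every $n\ge 1$, so it will remain to check, for some $n$, that it is \emph{transversal} --- i.e.\ that $\fA_K^+/(\varphi^n([p]_q))$ is $p$-torsion free --- and that it lies over $\mathcal{O}_K$, i.e.\ that there is a ring map $\mathcal{O}_K\to\fA_K^+/(\varphi^n([p]_q))$. The plan is to realise $\fA_K^+/(\varphi^n([p]_q))$ as a subring of $\mathcal{O}_{\CC_p}$ through a twist of $\theta$. Under the structure map $W(k)[\![q-1]\!]\to A_{\inf}(\mathcal{O}_{\CC_p})$ sending $q\mapsto[\epsilon]$ one computes $[p]_q=1+[\epsilon]+\dots+[\epsilon]^{p-1}=\varphi(\xi)$, where $\xi=1+[\epsilon^{1/p}]+\dots+[\epsilon^{1/p}]^{p-1}$ is the standard generator of $\ker(\theta)$; hence $\varphi^n([p]_q)=\varphi^{n+1}(\xi)$ in $A_{\inf}(\mathcal{O}_{\CC_p})$. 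Since $\varphi$ is an automorphism of $A_{\inf}(\mathcal{O}_{\CC_p})$, the element $\varphi^{n+1}(\xi)$ generates $\ker(\theta_{n+1})$ for $\theta_{n+1}:=\theta\circ\varphi^{-(n+1)}$, and $\theta_{n+1}$ induces an isomorphism $A_{\inf}(\mathcal{O}_{\CC_p})/\varphi^{n+1}(\xi)\cong\mathcal{O}_{\CC_p}$; moreover $\theta_{n+1}([\epsilon])=\zeta_{p^{n+1}}$.

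\textbf{Transversality.} First I would record that $\fA_K$ and $A_{\inf}(\mathcal{O}_{\CC_p})$ both sit inside the complete discrete valuation ring $W(\CC_p^\flat)$ (uniformiser $p$), and that $\fA_K$ is itself a complete discrete valuation ring with uniformiser $p$; comparing $p$-adic valuations then gives $\fA_K[\tfrac{1}{p}]\cap W(\CC_p^\flat)=\fA_K$, and intersecting with $A_{\inf}(\mathcal{O}_{\CC_p})$ yields $\fA_K[\tfrac{1}{p}]\cap A_{\inf}(\mathcal{O}_{\CC_p})=\fA_K^+$. Since $\varphi^n([p]_q)=\varphi^{n+1}(\xi)$ lies in $W(k)[\![q-1]\!]\subseteq\fA_K$ and is a nonzerodivisor, any $a\in\fA_K^+$ with $a\in\varphi^{n+1}(\xi)A_{\inf}(\mathcal{O}_{\CC_p})$ has $a/\varphi^{n+1}(\xi)\in\fA_K[\tfrac{1}{p}]\cap A_{\inf}(\mathcal{O}_{\CC_p})=\fA_K^+$. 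Therefore
\[
\fA_K^+/(\varphi^n([p]_q))\ \hookrightarrow\ A_{\inf}(\mathcal{O}_{\CC_p})/\varphi^{n+1}(\xi)\ \cong\ \mathcal{O}_{\CC_p},
\]
so $\fA_K^+/(\varphi^n([p]_q))$ is $p$-torsion free and the prism is transversal for \emph{every} $n\ge1$.

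\textbf{The structure map.} By the previous step it suffices to exhibit, for a suitable $n$, a ring map $\mathcal{O}_K\to\theta_{n+1}(\fA_K^+)\subseteq\mathcal{O}_{\CC_p}$. From $\theta_{n+1}([\epsilon])=\zeta_{p^{n+1}}$ the image already contains $W(k)[\zeta_{p^{n+1}}]=\mathcal{O}_{K'(\zeta_{p^{n+1}})}$; the remaining generators of $\fA_K^+$ over $W(k)[\![q-1]\!]$ are, by the field-of-norms construction of $\fA_K$ attached to the cyclotomic tower $K\subseteq K(\zeta_p)\subseteq\cdots$, lifts of a uniformiser of $\fE_K$. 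Tracking these through $\theta_{n+1}$ (equivalently, through $\theta$ after applying $\varphi^{-(n+1)}$) one should find that for $n\gg0$ the image $\theta_{n+1}(\fA_K^+)$ is the full ring of integers $\mathcal{O}_{K(\zeta_{p^{n+1}})}$; as $K\subseteq K(\zeta_{p^{n+1}})$, this supplies the required embedding $\mathcal{O}_K\hookrightarrow\fA_K^+/(\varphi^n([p]_q))$ and hence $(\fA_K^+,(\varphi^n([p]_q)))\in(\mathcal{O}_K)_{\Prism^\circ}$ for such $n$.

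\textbf{Main obstacle.} The first two steps are formal. The real content --- and the reason the statement asserts only the existence of some $n$ --- is the last step: one must unwind how $\fA_K$ is built from the field of norms $\fE_K$ and verify that the image of $\fA_K^+$ under the twisted map $\theta_{n+1}$ eventually reaches all of $\mathcal{O}_{K(\zeta_{p^{n+1}})}$. For small $n$ this image can be a proper subring not containing $\mathcal{O}_K$, because a uniformiser of $K$ only becomes visible in the quotient after sufficiently many Frobenius twists.
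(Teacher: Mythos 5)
Your first two steps are sound: prism-hood is indeed \cite[Lemma 2.5]{wu2021galois}, and your intersection argument (using that $\varphi^n([p]_q)$ reduces to a nonzero element of the field $\fE_K$, hence is a unit of the Cohen ring $\fA_K$, so that $\fA_K^+/(\varphi^n([p]_q))$ embeds into $A_{\mathrm{inf}}(\mathcal{O}_{\CC_p})/(\varphi^{n+1}(\xi))\cong\mathcal{O}_{\CC_p}$) is a legitimate direct proof of $p$-torsion freeness for every $n$; the paper simply cites \cite[Corollary 2.7]{wu2021galois} for this.

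The gap is exactly where you flag it, and it is not a peripheral loose end: producing the structure map $\mathcal{O}_K\to\fA_K^+/(\varphi^n([p]_q))$ for some $n$ is the whole content of the lemma and the only reason an $n$ has to be chosen at all, yet your third step is a plan rather than an argument. ``Tracking these through $\theta_{n+1}$ one should find\dots'' proves nothing: you give no control over the image under $\theta\circ\varphi^{-(n+1)}$ of a lift of a uniformiser of $\fE_K$, and establishing such control is genuine field-of-norms input (Colmez/Cherbonnier--Colmez-type approximation results), not bookkeeping. Moreover the target you set yourself --- that $\theta_{n+1}(\fA_K^+)$ equals the \emph{full} ring of integers $\mathcal{O}_{K(\zeta_{p^{n+1}})}$ for $n\gg 0$ --- is stronger than what is needed and is not justified (and for small or even moderate $n$ there is no reason the image should be normal in this sense); all the lemma requires is that the image eventually contains a compatible copy of $\mathcal{O}_K$, which is precisely \cite[Proposition 2.19]{wu2021galois}, the single citation on which the paper's proof of this point rests. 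As written, your proposal reproves the easy half and leaves the hard half asserted; to complete it you must either supply a proof of the containment $\mathcal{O}_K\subseteq\theta_{n+1}(\fA_K^+)$ for $n\gg0$ or invoke Wu's Proposition 2.19 (or an equivalent result) at that step.
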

\begin{proof}
    We have that $\mathbf{A}_K^+/(\varphi^n([p]_q))$ is $p$-torsion free for every $n\ge 1$ by \cite[Corollary 2.7]{wu2021galois}.
    By \cite[Proposition 2.19]{wu2021galois}, there exists $n\ge 1$ such that 
    $(\mathbf{A}_K^+, (\varphi^n([p]_q)))\in (\mathcal{O}_K)_{\Prism}$, thus $(\mathbf{A}_K^+, (\varphi^n([p]_q)))\in (\mathcal{O}_K)_{\Prism^\circ}$.
\end{proof}
\begin{construction}
Let $\mathbf{A}_{K,\Lambda}:=\hEL(\mathbf{A}_K^+, (\varphi^n([p]_q)))$. By \cite[Corollary 2.18.]{wu2021galois} the automorphism group of 
    $(\mathbf{A}_K^+, (\varphi^n([p]_q)))$ in $(\mathcal{O}_K)_\Prism$ is isomorphic to $\Gamma:=\mathrm{Gal}(K_\infty/K)$. The action of $\Gamma$ commutes with 
    $\varphi$ and extends $\Lambda$-linearly to an action on $\mathbf{A}_{K,\Lambda}$. By \cite[Lemma 2.17.]{wu2021galois} we have that the perfection of 
    $(\mathbf{A}_K^+, (\varphi^n([p]_q)))$ is given by the perfect prism $(A_\mathrm{inf}(\mathcal{O}_{\hat{K}_\infty}),\ker\theta)$ 
    corresponding to the perfectoid ring $\mathcal{O}_{\hat{K}_\infty}$. We put $\tilde{\mathbf{A}}_{K,\Lambda}:=\hEL(A_\mathrm{inf}(\mathcal{O}_{\hat{K}_\infty}),\ker\theta)$. 
    As above, this ring is equipped with a $\Lambda$-linear action of $\Gamma$ commuting with $\varphi$.
\end{construction}
\begin{definition}
    Let $R$ be a commutative ring, $\varphi:R\to R$ an endomorphism and $G\subseteq \mathrm{Aut}(R)$ a subgroup of the automorphism group of $R$ such that $g\circ \varphi= \varphi\circ g$ 
    for all $g\in G$. The category of étale $(\varphi, G)$-modules over $R$ is defined to be the subcategory of $\mathrm{Vect}(R)^{\varphi=1}$ consisting of étale $\varphi$-modules together 
    with a semi-linear action of $G$ by $R$-module automorphisms commuting with $\varphi$. Morphisms are given by those morphisms of $\mathrm{Vect}(R)^{\varphi=1}$ fixed by the action of $G$. 
    We denote the category of étale $(\varphi,G)$-modules by $\mathrm{Mod}_{\varphi,G}^\text{ét}(R)$.
\end{definition}
\begin{proposition}\label{phi-gamma-perf-equiv}
Base extension induces an equivalence of categories\[
    \mathrm{Mod}_{\varphi,\Gamma}^\text{ét}(\fA_{K,\Lambda})\cong \mathrm{Mod}_{\varphi,\Gamma}^\text{ét}(\tilde{\fA}_{K,\Lambda}).
\]
\end{proposition}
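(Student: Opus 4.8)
The strategy is to reduce the claimed equivalence to the already-established equivalence of étale $\varphi$-modules of Lemma~\ref{perfection-same-phi-modules} (equivalently the first equivalence of Proposition~\ref{phi-mod-equiv-lisse}), and then to check that this equivalence carries the extra $\Gamma$-structure along faithfully. Recall that $(\mathbf{A}_K^+,(\varphi^n([p]_q)))$ is a transversal prism whose perfection is $(A_\mathrm{inf}(\mathcal{O}_{\hat K_\infty}),\ker\theta)$; hence by Lemma~\ref{perfection-same-phi-modules} the base-change functor
\[
    \mathrm{Vect}(\fA_{K,\Lambda})^{\varphi=1}\longrightarrow \mathrm{Vect}(\tilde{\fA}_{K,\Lambda})^{\varphi=1}
\]
is an equivalence of categories. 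The first step is therefore to observe that $\Gamma$ acts $\Lambda$-linearly and $\varphi$-equivariantly on both $\fA_{K,\Lambda}$ and $\tilde{\fA}_{K,\Lambda}$ (this is recorded in the Construction preceding the statement), and that the base-change functor $M\mapsto \tilde{\fA}_{K,\Lambda}\otimes_{\fA_{K,\Lambda}}M$ is $\Gamma$-equivariant in the obvious sense: for $g\in\Gamma$, the semilinear automorphism $g\otimes g$ of the target is compatible with $g$ on the source. Consequently base change sends an object of $\mathrm{Mod}_{\varphi,\Gamma}^\text{\'et}(\fA_{K,\Lambda})$ to an object of $\mathrm{Mod}_{\varphi,\Gamma}^\text{\'et}(\tilde{\fA}_{K,\Lambda})$, and it is fully faithful on these subcategories because it is already fully faithful on the ambient categories of $\varphi$-modules and the $\Gamma$-fixed morphisms on one side map bijectively to the $\Gamma$-fixed morphisms on the other.

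The only real content is essential surjectivity: given $(\tilde M,\varphi_{\tilde M},(g_{\tilde M})_{g\in\Gamma})\in\mathrm{Mod}_{\varphi,\Gamma}^\text{\'et}(\tilde{\fA}_{K,\Lambda})$, we know from Lemma~\ref{perfection-same-phi-modules} that there is an étale $\varphi$-module $M$ over $\fA_{K,\Lambda}$, unique up to unique isomorphism, with $\tilde{\fA}_{K,\Lambda}\otimes_{\fA_{K,\Lambda}}M\cong \tilde M$ compatibly with $\varphi$. I would then argue that the descended object automatically inherits a $\Gamma$-action: for each $g\in\Gamma$, the pair $(\tilde M,g_{\tilde M})$ can be viewed, via the ring automorphism $g$ of $\tilde{\fA}_{K,\Lambda}$ (which restricts to the ring automorphism $g$ of $\fA_{K,\Lambda}$), as a descent datum, and uniqueness of the descent in Lemma~\ref{perfection-same-phi-modules} forces $g_{\tilde M}$ to be the base change of a unique $g_M\colon g^\ast M\xrightarrow{\sim}M$; the cocycle relation $g_M\circ g^\ast(h_M)=(gh)_M$ and $g_M|_{g=\mathrm{id}}=\mathrm{id}$ follow from the corresponding relations downstairs by the same uniqueness, and commutation with $\varphi$ is preserved because $\varphi$ and the $g$'s commute on $\fA_{K,\Lambda}$. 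In other words, the equivalence of Lemma~\ref{perfection-same-phi-modules} upgrades to an equivalence of groupoids-with-$\Gamma$-action, and passing to the subcategories cut out by a semilinear $\Gamma$-action yields the claim.

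The main obstacle, and the step deserving the most care, is making the descent of Lemma~\ref{perfection-same-phi-modules} sufficiently functorial to transport the $\Gamma$-action: one needs that the quasi-inverse to base change is compatible with the ring automorphisms $g$ of the two algebras, i.e. that for a commuting square of base-change and $g$-twist functors the two composites agree up to canonical natural isomorphism. This is a formal consequence of the \emph{uniqueness} built into the proof of Lemma~\ref{perfection-same-phi-modules} (which ultimately rests on \cite[Proposition 4.1.1]{10.1007/978-3-540-37802-0_3} and its naturality), but it must be spelled out—one cannot simply invoke an abstract equivalence of categories, since an equivalence alone does not know about the $\Gamma$-action. Once that functoriality is in hand, the remaining verifications (cocycle condition, compatibility with $\varphi$, matching of morphism sets) are purely formal.
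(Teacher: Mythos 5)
Your proposal is correct and follows essentially the same route as the paper, whose entire proof is the one-line remark that the statement ``follows from Lemma~\ref{perfection-same-phi-modules} by taking the $\Gamma$-action into account''; you have simply spelled out the formal bookkeeping (equivariance of the base-change map, descent of the semilinear $\Gamma$-action via full faithfulness and compatibility with the twist functors $g^\ast$) that the paper leaves implicit.
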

\begin{proof}
    This follows from Proposition \ref{perfection-same-phi-modules} by taking the $\Gamma$-action into account.
\end{proof}
\begin{lemma}\label{cech-nerve-ainf}
    Let $(A^\bullet, \ker\theta)$ be the Čech nerve of $(A_\mathrm{inf}(\mathcal{O}_{\hat{K}_\infty}),\ker\theta)$ in $(\mathcal{O}_K)_\Prism^{\mathrm{perf}}$. There is an isomorphism of cosimplicial rings
    \[
        \hEL(A^\bullet, \ker\theta)\cong C(\Gamma^\bullet, \tilde{\mathbf{A}}_{K,\Lambda})
    \]
    where $C(\Gamma^n,\tilde{\mathbf{A}}_{K,\Lambda})$ denotes the ring of continuous maps $\Gamma^n\to \tilde{\mathbf{A}}_{K,\Lambda}$.
\end{lemma}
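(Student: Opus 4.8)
The plan is to transport the Čech nerve through the two functors out of which $\hEL$ is built on perfect prisms — the equivalence $(A,I)\mapsto A/I$ of Theorem~\ref{perfectprism-perfectoidrings} and the formation of $(-)[\tfrac{1}{I}]\,\widehat{\otimes}_{\ZZ_p}\Lambda$ — and to reduce everything to a computation with continuous $\Gamma$-cochains. For the first functor: Theorem~\ref{perfectprism-perfectoidrings} identifies $(\mathcal{O}_K)_\Prism^{\mathrm{perf}}$ with the category of perfectoid $\mathcal{O}_K$-algebras, compatibly with products over the terminal object $\Spf(\mathcal{O}_K)$ of the topos, so $A^\bullet$ corresponds under $(A,I)\mapsto A/I$ to the Čech nerve $[n]\mapsto\mathcal{O}_{\hat{K}_\infty}^{\sqcup_{\mathcal{O}_K}(n+1)}$ of $\mathcal{O}_{\hat{K}_\infty}$ in perfectoid rings. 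By the field-of-norms description of the cyclotomic tower from \cite{wu2021galois}, $\Spf(\mathcal{O}_{\hat{K}_\infty})\to\Spf(\mathcal{O}_K)$ is a pro-Galois cover with group $\Gamma=\mathrm{Gal}(K_\infty/K)$ — the automorphism group of the corresponding prism being $\Gamma$ by \cite[Corollary 2.18]{wu2021galois} — and for profinite Galois covers of perfectoid rings the self-coproducts are computed by the $\Gamma$-nerve, i.e.\ $\mathcal{O}_{\hat{K}_\infty}^{\sqcup_{\mathcal{O}_K}(n+1)}\cong C(\Gamma^n,\mathcal{O}_{\hat{K}_\infty})$, functorially in $[n]$ for the cosimplicial structure that computes continuous group cohomology of $\Gamma$ with coefficients in $\mathcal{O}_{\hat{K}_\infty}$.

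For the second functor I would apply the quasi-inverse $R\mapsto(A_{\mathrm{inf}}(R),\ker\theta)$ followed by $\hEL$. Recall from the proof of Lemma~\ref{covering-I-completely-flat} that a transversal prism satisfies $\hEL(A,I)=\varprojlim_j\bigl(A[\tfrac{1}{I}]\otimes_{\ZZ_p}\Lambda_j\bigr)$, and that each $\Lambda_j$ is $p$-nilpotent; reducing modulo $p^j$ (and a pseudo-uniformizer) one is left with coefficient rings that are discrete, on which $C(\Gamma^n,-)=\varinjlim_U(-)^{\Gamma^n/U}$ (over open subgroups $U\le\Gamma^n$) and therefore commutes with tilting, with the Witt-vector functor, with reduction modulo $p^j$, with inverting a generator $\xi$ of $\ker\theta$, and with $-\otimes_{\ZZ/p^j}\Lambda_j$ (since $\Lambda_j$ is finite over $\ZZ/p^j$). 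It follows that $A^{(n)}\cong C(\Gamma^n,A_{\mathrm{inf}}(\mathcal{O}_{\hat{K}_\infty}))$ — again a transversal prism, as $C(\Gamma^n,\mathcal{O}_{\hat{K}_\infty})$ is $p$-torsion free — with distinguished ideal generated by the constant function $\xi$, and that $A^{(n)}[\tfrac{1}{\xi}]\otimes_{\ZZ_p}\Lambda_j\cong C\bigl(\Gamma^n,\,A_{\mathrm{inf}}(\mathcal{O}_{\hat{K}_\infty})[\tfrac{1}{\xi}]\otimes_{\ZZ_p}\Lambda_j\bigr)$. Taking $\varprojlim_j$, and using that $C(\Gamma^n,\varprojlim_j M_j)=\varprojlim_j C(\Gamma^n,M_j)$ (a continuous map into an inverse limit being a compatible system of such), one arrives at $\hEL(A^{(n)},\ker\theta)\cong C(\Gamma^n,\tilde{\mathbf{A}}_{K,\Lambda})$. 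Every identification above is natural in $[n]$, so this is an isomorphism of cosimplicial rings; the target carries the standard cosimplicial structure on the continuous-cochain object $C(\Gamma^\bullet,\tilde{\mathbf{A}}_{K,\Lambda})$ — the one whose associated complex is the continuous bar complex of $\Gamma$ — with $\Gamma$ acting as in the preceding Construction (via the identification of $\Gamma$ with the automorphism group of the prism), and in degree $0$ it recovers $\tilde{\mathbf{A}}_{K,\Lambda}$ itself.

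The step I expect to be the real work is the first one: verifying that iterated fibre products in $(\mathcal{O}_K)_\Prism^{\mathrm{perf}}$ are computed by the $\Gamma$-nerve of $\mathcal{O}_{\hat{K}_\infty}$. Beyond bookkeeping with the profinite topology on $\Gamma$, this rests on the fact that the perfectoid self-coproducts $\mathcal{O}_{\hat{K}_\infty}^{\sqcup_{\mathcal{O}_K}(n+1)}$ absorb the non-reducedness of the naive tensor products $\mathcal{O}_{K_m}\otimes_{\mathcal{O}_K}\mathcal{O}_{K_m}$ — the finite layers $K_m/K$ being ramified — which is where almost purity and the perfectoid Galois theory of \cite{wu2021galois} enter. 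The remainder, commuting $(-)^\flat$, $W(-)$, $(-)[\tfrac{1}{\xi}]$ and $(-)\,\widehat{\otimes}_{\ZZ_p}\Lambda$ past $C(\Gamma^n,-)$, is a routine verification once one works at finite level with the $\varinjlim_U$-description of continuous functions; it is the same bookkeeping that underlies the identification of continuous $\Gamma$-cohomology with Herr-type complexes in the theory of $(\varphi,\Gamma)$-modules.
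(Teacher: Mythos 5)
Your second step (commuting $-\,\widehat{\otimes}_{\ZZ_p}\Lambda$ past $C(\Gamma^n,-)$ by reducing modulo $p^j$ and a power of a generator of $\ker\theta$, using compactness of $\Gamma$ and finiteness of $\Lambda_j$) is sound and is essentially the same dévissage the paper carries out; that part is fine. The gap is in your first step, which is exactly the point you yourself flag as ``the real work'' and then do not supply. You assert that, for the pro-Galois cover $\mathcal{O}_K\to\mathcal{O}_{\hat K_\infty}$, the self-coproducts in perfectoid rings (equivalently, via Theorem \ref{perfectprism-perfectoidrings}, the self-products of $(A_{\mathrm{inf}}(\mathcal{O}_{\hat K_\infty}),\ker\theta)$ in $(\mathcal{O}_K)_\Prism^{\mathrm{perf}}$) are \emph{literally} $C(\Gamma^n,\mathcal{O}_{\hat K_\infty})$, so that $A^{(n)}\cong C(\Gamma^n,A_{\mathrm{inf}}(\mathcal{O}_{\hat K_\infty}))$ integrally. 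That statement is stronger than what the Galois/almost-purity theory gives and is not what the literature proves: the finite layers $\mathcal{O}_{K_m}/\mathcal{O}_K$ are ramified, the tower is only \emph{almost} étale in the limit (Tate/Faltings), so the $p$-completed tensor products, and a fortiori their perfectoidizations, agree with $C(\Gamma^n,\mathcal{O}_{\hat K_\infty})$ only up to almost-zero discrepancies (and the integral fibre products also carry ``boundary'' contributions invisible on the generic fibre, as in the analogous computation of the self-products of $A_{\mathrm{inf}}(\mathcal{O}_{\CC_p})$ over $(\mathcal{O}_K)_\Prism$). The true and needed statement is the one proved in \cite[Lemma 5.3]{wu2021galois} (and its analogues in \cite{bhatt2023prismatic}, \cite{marks2023prismatic}): the identification with continuous cochains holds only \emph{after} applying $(-)[1/I]^\wedge_p$, i.e.\ at the level of the Laurent-type sheaf, where it is deduced from the identification of the generic fibres of the self-products, as diamonds, with $\underline{\Gamma}^n\times\mathrm{Spd}(\hat K_\infty)$, together with a computation of global sections. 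Your sketch (``almost purity and perfectoid Galois theory'') would at best produce an almost isomorphism integrally, and you never explain how inverting $\xi$ and $p$-completing upgrades this to an honest isomorphism — which is in fact the key mechanism, and precisely the content of the cited lemma.

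So as written the argument does not close: either you must prove the integral claim $A^{(n)}\cong C(\Gamma^n,A_{\mathrm{inf}}(\mathcal{O}_{\hat K_\infty}))$ (doubtful as stated), or you should weaken it to the assertion $A^{(n)}[1/\xi]^\wedge_p\cong C(\Gamma^n,\tilde{\mathbf{A}}_{K,\ZZ_p})$ and actually prove that — at which point you are reproving \cite[Lemma 5.3]{wu2021galois}. The paper avoids this entirely by quoting that lemma for $\Lambda=\ZZ_p$ and devoting the proof to the genuinely new point, namely the isomorphism $C(\Gamma^\bullet,\tilde{\mathbf{A}}_{K,\ZZ_p})\widehat{\otimes}_{\ZZ_p}\Lambda\cong C(\Gamma^\bullet,\tilde{\mathbf{A}}_{K,\Lambda})$, which your second paragraph handles in essentially the same way (finite presentation of $\Lambda_n$ over $\ZZ/p^n$, flatness of $\tilde{\mathbf{A}}_K^+/(\alpha^m,p^n)$ over $\ZZ/p^n$, five lemma). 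Restructure your write-up accordingly: cite or prove the $\ZZ_p$-case at the level of $(-)[1/I]^\wedge_p$, then run your coefficient dévissage.
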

\begin{proof}
    The case $\Lambda=\ZZ_p$ is \cite[Lemma 5.3.]{wu2021galois}. In general, it suffices to show that there is an isomorphism
\begin{align*}
    C(\Gamma^\bullet, \tilde{\mathbf{A}}_{K,\ZZ_p})\hat{\otimes}_{\ZZ_p}\Lambda \cong C(\Gamma^\bullet, \tilde{\mathbf{A}}_{K,\Lambda}).
\end{align*}
    Let $\ker \theta =(\alpha)$ and $\tilde{\mathbf{A}}_K^+:=A_\mathrm{inf}(\mathcal{O}_{\hat{K}_\infty})$. We have $\tilde{\mathbf{A}}_{K,\ZZ_p}=\tilde{\mathbf{A}}_K^+[\frac{1}{\alpha}]^\wedge_p$
    and by $\mathfrak{m}$-adic completeness it suffices to show that
     \[
        C(\Gamma^\bullet, \tilde{\mathbf{A}}_K^+[\frac{1}{\alpha}]/p^n)\otimes_{\ZZ/p^n}\Lambda_n \cong C(\Gamma^\bullet, \tilde{\mathbf{A}}_K^+[\frac{1}{\alpha}]/p^n \otimes_{\ZZ/p^n}\Lambda_n)
    \]
    is an isomorphism for all $n\ge 1$. By compactness of $\Gamma$ (cf. \cite[Lemma 4.3.7.]{bhatt2014proetale}) and $\alpha$-adic completeness of $\tilde{\mathbf{A}}_K^+/p^n$ it suffices to show that
    \[
        C(\Gamma^\bullet, \tilde{\mathbf{A}}_K^+/(\alpha^m,p^n))\otimes_{\ZZ/p^n}\Lambda_n \cong C(\Gamma^\bullet, \tilde{\mathbf{A}}_K^+/(\alpha^m,p^n) \otimes_{\ZZ/p^n}\Lambda_n)
    \]
    is an isomorphism for all $n,m \ge 1$. This is clear if $\Lambda_n$ is flat over $\ZZ/p^n$. In general, by finiteness of $\Lambda_n$ over $\ZZ/p^n$, 
    one can apply $C(\Gamma^s,\tilde{\mathbf{A}}_K^+/(\alpha^m,p^n))\otimes_{\ZZ/p^n}-)$ to a finite presentation of $\Lambda_n$, which is exact as $\tilde{\mathbf{A}}_K^+/(\alpha^m,p^n)$ is flat over $\ZZ/p^n$ and by discreteness of all rings, and then conclude with the five lemma.
\end{proof}
\begin{lemma}[{\cite[Lemma 5.18]{marks2023prismatic}}]\label{cover-final-object}
    $(A_\mathrm{inf}(\mathcal{O}_{\hat{K}_\infty}),\ker\theta)$ covers the final object $\ast$ of the topos $\mathrm{Shv}((\mathcal{O}_K)_{\Prism^\circ})$.
    \end{lemma}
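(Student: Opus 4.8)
The plan is to bootstrap from the two facts recorded just above: by Lemma~\ref{final-object} the prism $Q:=(\mathbf{A}_K^+,(\varphi^n([p]_q)))$ lies in $(\mathcal{O}_K)_{\Prism^\circ}$, and, by the construction preceding the statement, its perfection is $P:=(A_{\mathrm{inf}}(\mathcal{O}_{\hat K_\infty}),\ker\theta)$ (again transversal, since $P/\ker\theta=\mathcal{O}_{\hat K_\infty}$ is $p$-torsion free, so $P\in(\mathcal{O}_K)_{\Prism^\circ}$). Recall that an object $W$ of a site covers the final object of the associated topos exactly when every object $V$ admits a covering $\{V_i\to V\}$ with $\Hom(V_i,W)\neq\emptyset$ for all $i$, and that these situations compose; so it suffices to prove \textup{(a)} that $Q$ covers the final object of $\Shv((\mathcal{O}_K)_{\Prism^\circ})$, and \textup{(b)} that the perfection map $\mathbf{A}_K^+\to A_{\mathrm{inf}}(\mathcal{O}_{\hat K_\infty})$ is $(p,I)$-completely faithfully flat, i.e.\ defines a single-element covering of $Q$ by $P$ in $(\mathcal{O}_K)_{\Prism^\circ}$. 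Given both, the associated maps of sheaves $h_P\to h_Q\to\ast$ are epimorphisms, hence so is $h_P\to\ast$, which is the claim. (Equivalently, by the universal property of the perfect prism attached to the perfectoid ring $\mathcal{O}_{\hat K_\infty}$, Theorem~\ref{perfectprism-perfectoidrings}, a prism map $P\to(C,J)$ over $\mathcal{O}_K$ is the same as an $\mathcal{O}_K$-algebra map $\mathcal{O}_{\hat K_\infty}\to C/J$, so \textup{(a)}+\textup{(b)} say exactly that every transversal prism over $\mathcal{O}_K$ acquires such a map after a faithfully flat cover.)

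Step \textup{(b)} is the elementary one. The perfection map induces on special fibres the structural inclusion $\mathcal{O}_K=\mathbf{A}_K^+/(\varphi^n([p]_q))\hookrightarrow\mathcal{O}_{\hat K_\infty}$, which is $p$-completely faithfully flat: writing $\mathcal{O}_{\hat K_\infty}=(\varinjlim_m\mathcal{O}_{K_m})^\wedge_p$ with each $\mathcal{O}_{K_m}$ finite free over the complete discrete valuation ring $\mathcal{O}_K$, it is a $p$-completed filtered colimit of faithfully flat maps. Since $P$ is transversal, $p$ together with a generator of $\ker\theta$ is a regular sequence on $A_{\mathrm{inf}}(\mathcal{O}_{\hat K_\infty})$, so $(p,I)$-complete (faithful) flatness of a map of prisms whose target is transversal can be detected on special fibres; hence $\mathbf{A}_K^+\to A_{\mathrm{inf}}(\mathcal{O}_{\hat K_\infty})$ is $(p,I)$-completely faithfully flat, which is \textup{(b)}.

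For \textup{(a)} I would use the standard mechanism by which a Breuil–Kisin-type prism covers the final object of the absolute prismatic topos of $\mathcal{O}_K$: given $(C,J)\in(\mathcal{O}_K)_{\Prism^\circ}$, one forms a prismatic envelope over $(C,J)$ adjoining, in a Frobenius-compatible way, a lift of the image in $C/J$ of the relevant distinguished element of $\mathbf{A}_K^+$, following the argument of \cite{wu2021galois}; by the flatness of prismatic envelopes of transversal data (\cite[§3]{bhatt2022prisms}) this yields a faithfully flat covering $(C',JC')\to(C,J)$ in $(\mathcal{O}_K)_{\Prism^\circ}$ — with $C'/JC'$ again $p$-torsion free — together with a prism map $Q\to(C',JC')$ over $\mathcal{O}_K$, so that $h_Q\to\ast$ is an epimorphism. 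This step — producing, for an arbitrary transversal prism over $\mathcal{O}_K$, a faithfully flat cover that admits a map from $Q$ — is the only real difficulty; it is the familiar interaction between the absolute prismatic site, $\delta$-rings, and prismatic envelopes, carried out in \cite{wu2021galois}, whereas \textup{(b)} and the manipulations with composites of covers are routine.
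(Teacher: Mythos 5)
Your overall reduction (covers compose: if $Q:=(\mathbf{A}_K^+,(\varphi^n([p]_q)))$ covers the final object and the perfection map $Q\to P:=(A_{\mathrm{inf}}(\mathcal{O}_{\hat{K}_\infty}),\ker\theta)$ is a covering in the site, then $P$ covers the final object) is formally sound, and step (b) is essentially correct: since a generator of the structural ideal is a nonzerodivisor on the (transversal) target, a Koszul computation shows that $(p,I)$-complete faithful flatness can indeed be checked after reduction mod $I$; alternatively, (b) follows because the perfection is the $(p,I)$-completion of the colimit of $\mathbf{A}_K^+$ along its finite flat Frobenius. (Minor inaccuracy: $\mathbf{A}_K^+/(\varphi^n([p]_q))$ is not $\mathcal{O}_K$ but the ring of integers of a finite subextension of $K_\infty/K$, cf.\ \cite[Corollary 2.7, Proposition 2.19]{wu2021galois}; this does not affect the conclusion.)

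The genuine gap is step (a), which you yourself identify as the only real difficulty and then do not prove. The sketch "form a prismatic envelope adjoining, in a Frobenius-compatible way, a lift of the image in $C/J$ of the relevant distinguished element of $\mathbf{A}_K^+$" is not a construction: a morphism out of $Q$ in the site is a $\delta$-ring map $\mathbf{A}_K^+\to C'$ over $\mathcal{O}_K$ carrying $\varphi^n([p]_q)$ into $JC'$, and there is no "image in $C/J$ of the distinguished element" before such a map exists. Moreover, the references you lean on do not supply (a): the covering statements in \cite{wu2021galois} (and the analogous \cite[Lemma 5.18]{marks2023prismatic}, which is what the paper simply cites here, giving no proof of its own) concern the \emph{perfect} prisms $A_{\mathrm{inf}}(\mathcal{O}_{\CC_p})$ resp.\ $A_{\mathrm{inf}}(\mathcal{O}_{\hat{K}_\infty})$, precisely because for a perfect prism the universal property of Theorem \ref{perfectprism-perfectoidrings} reduces the problem to producing, flat-locally on $C/J$, an $\mathcal{O}_K$-algebra map from the perfectoid ring $\mathcal{O}_{\hat{K}_\infty}$, i.e.\ to adjoining a compatible system of $p$-power roots of unity in a $(p,J)$-completely faithfully flat (hence transversality-preserving) way. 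Your reduction to $Q$ discards exactly this tool: since $Q$ is not perfect, a map out of $Q$ is not determined by a map out of its reduction, and the only known arguments for non-perfect covering prisms (e.g.\ the Breuil--Kisin prism) use an explicit presentation by free $\delta$-variables, which $\mathbf{A}_K^+$ does not obviously admit; indeed the natural way to obtain (a) is to deduce it from the very lemma you are proving. So as written the proposal proves the easy half and defers the actual content to an unavailable "standard mechanism"; the direct route --- for an arbitrary transversal $(C,J)$ over $\mathcal{O}_K$, construct a faithfully flat transversal cover whose reduction receives $\mathcal{O}_{\hat{K}_\infty}$, then apply the universal property of the perfect prism --- is what needs to be carried out.
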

\begin{theorem}\label{main-thm2}
    Let $T^\Lambda:=T^\Lambda_{\mathrm{Spf}(\mathcal{O}_K)}$ be the étale realization functor. There are equivalences of categories
    \begin{align*}
        \mathrm{Mod}_{\varphi,\Gamma}^\text{ét}(\fA_{K,\Lambda})\overset{\cong}{\longleftarrow} \mathrm{Vect}((\mathcal{O}_K)_{\Prism^\circ},\hEL)^{\varphi=1}\overset{\cong}{\longrightarrow} \mathrm{Rep}_\Lambda(G_K)
    \end{align*}
    where the first arrow is given by evaluating a $(\Lambda,F)$-crystal at the prism $(\mathbf{A}_K^+, (\varphi^n([p]_q)))$ and the second arrow by étale realization $T^\Lambda$.
\end{theorem}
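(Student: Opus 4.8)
The plan is to establish the two equivalences separately, with the right-hand one coming essentially for free from the work already done. For the right-hand arrow: by Theorem \ref{mainthm-1} applied to $\fX = \mathrm{Spf}(\mathcal{O}_K)$, we have $\mathrm{Vect}((\mathcal{O}_K)_{\Prism^\circ},\hEL)^{\varphi=1}\cong \mathrm{Loc}_\Lambda((\mathcal{O}_K)_\eta)$. Now $(\mathcal{O}_K)_\eta = \mathrm{Spa}(K,\mathcal{O}_K)$, so $\mathrm{Loc}_\Lambda((\mathcal{O}_K)_\eta)$ is the category of lisse $\Lambda$-sheaves on the (pro-)étale site of a point, which by the standard dictionary (the residue field being $K$ and $\pi_1$ of the point being $G_K$) is the category of continuous $\Lambda$-linear $G_K$-representations on finite projective $\Lambda$-modules, i.e. $\mathrm{Rep}_\Lambda(G_K)$. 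This identifies $T^\Lambda$ with the étale realization, proving the right-hand equivalence.

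For the left-hand arrow I would proceed in three steps. First, unwind the source: a $(\Lambda,F)$-crystal $\mathcal{M}$ is, by Proposition \ref{vector-bundle-on-hEL} and Definition \ref{definition-(lambda,f)-crystals}, a compatible system of étale $\varphi$-modules over $\hEL(A,I)$ as $(A,I)$ ranges over $(\mathcal{O}_K)_{\Prism^\circ}$. Second, by Lemma \ref{cover-final-object} the prism $(A_\mathrm{inf}(\mathcal{O}_{\hat{K}_\infty}),\ker\theta)$ covers the final object of $\mathrm{Shv}((\mathcal{O}_K)_{\Prism^\circ})$, so $\mathcal{M}$ is determined by descent data along its Čech nerve $(A^\bullet,\ker\theta)$; by Lemma \ref{cech-nerve-ainf} the evaluation $\hEL(A^\bullet,\ker\theta)$ is the cosimplicial ring $C(\Gamma^\bullet,\tilde{\fA}_{K,\Lambda})$ of continuous cochains, so that the descent datum for an $\hEL$-vector bundle along this cover is exactly a semilinear continuous $\Gamma$-action on a finite projective $\tilde{\fA}_{K,\Lambda}$-module, compatible with $\varphi$. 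In other words, evaluation at $(A_\mathrm{inf}(\mathcal{O}_{\hat{K}_\infty}),\ker\theta)$ yields an equivalence $\mathrm{Vect}((\mathcal{O}_K)_{\Prism^\circ},\hEL)^{\varphi=1}\cong \mathrm{Mod}_{\varphi,\Gamma}^\text{ét}(\tilde{\fA}_{K,\Lambda})$. Third, apply Proposition \ref{phi-gamma-perf-equiv}, which says base extension gives an equivalence $\mathrm{Mod}_{\varphi,\Gamma}^\text{ét}(\fA_{K,\Lambda})\cong \mathrm{Mod}_{\varphi,\Gamma}^\text{ét}(\tilde{\fA}_{K,\Lambda})$, to replace the perfect ring $\tilde{\fA}_{K,\Lambda}$ by the imperfect one $\fA_{K,\Lambda}$. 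Composing these three steps, and checking that the composite sends $\mathcal{M}$ to its value at $(\mathbf{A}_K^+,(\varphi^n([p]_q)))$ (using that $(\mathbf{A}_K^+,(\varphi^n([p]_q)))\in (\mathcal{O}_K)_{\Prism^\circ}$ by Lemma \ref{final-object} and that its perfection is $(A_\mathrm{inf}(\mathcal{O}_{\hat{K}_\infty}),\ker\theta)$), gives the first arrow as stated.

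The main obstacle I anticipate is the second step: carefully checking that descent along the non-representable cover $(A_\mathrm{inf}(\mathcal{O}_{\hat{K}_\infty}),\ker\theta)\to \ast$ really does translate, via the identification of the Čech nerve with continuous cochains on $\Gamma$, into the category of $(\varphi,\Gamma)$-modules — in particular that the cocycle condition on the descent datum matches the associativity/continuity constraints on the $\Gamma$-action, and that "continuous cochains" is the right notion of $C(\Gamma^\bullet,-)$ appearing in Lemma \ref{cech-nerve-ainf}. This is the place where one must be careful with topologies (the $\mathfrak{m}$-adic topology on $\Lambda$, the $(p,I)$-adic topology on $\tilde{\fA}_K^+$) and with the fact that $\Gamma$ is a profinite — indeed $p$-adic Lie — group rather than finite, so that "cover of the final object" and Čech-descent must be invoked in the appropriate pro-étale/faithfully flat topos-theoretic sense. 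Once that dictionary is pinned down, the rest is a matter of assembling the cited lemmas, so I would expect the write-up to be short.
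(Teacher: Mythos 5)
Your proposal is correct and follows essentially the same route as the paper: the right-hand equivalence is Theorem \ref{mainthm-1} (with local systems on $\mathrm{Spa}(K,\mathcal{O}_K)$ identified with $\mathrm{Rep}_\Lambda(G_K)$), and the left-hand one is obtained exactly as in the paper by descent along the cover $(A_\mathrm{inf}(\mathcal{O}_{\hat{K}_\infty}),\ker\theta)\to\ast$ from Lemma \ref{cover-final-object}, the identification of the Čech nerve with continuous cochains $C(\Gamma^\bullet,\tilde{\fA}_{K,\Lambda})$ from Lemma \ref{cech-nerve-ainf}, and the passage from $\tilde{\fA}_{K,\Lambda}$ to $\fA_{K,\Lambda}$ via Proposition \ref{phi-gamma-perf-equiv}. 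Your write-up is in fact slightly more explicit than the paper's about the last step and about checking that the composite is evaluation at $(\mathbf{A}_K^+,(\varphi^n([p]_q)))$.
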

\begin{proof}
    The second equivalence is Theorem \ref{mainthm-1}. To prove the first equivalence, we argue as in \cite[Theorem 5.2.]{wu2021galois} (see also \cite[Theorem 5.20.]{marks2023prismatic}).
    Since $(A_\mathrm{inf}(\mathcal{O}_{\hat{K}_\infty}),\ker\theta)$ covers the final object of $\mathrm{Shv}((\mathcal{O}_K)_{\Prism^\circ})$, we have  \[
        \mathrm{Vect}((\mathcal{O}_K)_{\Prism^\circ},\hEL)^{\varphi=1}\cong \lim_\bullet \mathrm{Vect}(\hEL(A^\bullet, \ker\theta))^{\varphi=1}
    \]
    where $(A^\bullet, \ker\theta)$ denotes the Čech nerve of $(A_\mathrm{inf}(\mathcal{O}_{\hat{K}_\infty}),\ker\theta)$ in $(\mathcal{O}_K)_\Prism^{\mathrm{perf}}$. Now 
    Lemma \ref{cech-nerve-ainf} shows that the category on the right-hand side is just $\mathrm{Mod}_{\varphi,\Gamma}^\text{ét}(\tilde{\fA}_{K,\Lambda})$.
\end{proof}
\begin{remark}
    One can prove the second equivalence in Theorem \ref{main-thm2} more explicitly and without the use of the second equivalence 
    of Theorem \ref{mainthm-1}; that is, without the use of v-descent.
    Indeed, by the first equivalence of Theorem \ref{mainthm-1}, one may equivalently work over the perfect prismatic site. 
    Define $\tilde{\fA}_\Lambda:=\hEL(A_{\mathrm{inf}}(\mathcal{O}_{\CC_p}), \ker \theta)$ 
    and note that $\tilde{\fA}_\Lambda$ admits a $\Lambda$-linear action of $G_K$, which comes from the usual $G_K$ action on 
    $W(\CC_p^\flat)$. Then, as in \cite[Theorem 5.6.]{wu2021galois}, one checks that $(A_{\mathrm{inf}}(\mathcal{O}_{\CC_p}), \ker \theta)$ 
    covers the final object of the topos $\mathrm{Shv}((\mathcal{O}_K)^{\mathrm{perf}}_\Prism)$ and for the Čech nerve 
    $(A^\bullet, \ker\theta)$ of $(A_\mathrm{inf}(\mathcal{O}_{\CC_p}),\ker \theta)$ in $(\mathcal{O}_K)^{\mathrm{perf}}_\Prism$ it is
    $A^\bullet[1/I]^\wedge_{p}\cong C^\bullet(G_K, W(\CC_p^\flat))$. In particular,  
    $\mathrm{Vect}((\mathcal{O}_K)_{\Prism^\circ},\hEL)^{\varphi=1} \cong {\mathrm{Mod}_{\varphi,G_K}^\text{ét}(\tilde{\fA}_{\Lambda})}$. 
    Since $\tilde{\fA}_\Lambda/\mathfrak{m}$ is a finite product of copies of $\CC_p^\flat$, by \cite[Proposition 3.2.4.]{Schneider_2017} and $\mathfrak{m}$-adic approximation, 
    it follows that taking Frobenius fixed points induces an equivalence ${\mathrm{Mod}_{\varphi,G_K}^\text{ét}(\tilde{\fA}_{\Lambda})}\cong \mathrm{Rep}_\Lambda(G_K)$. Thus, $T^\Lambda$ is given by evaluating a crystal at the prism $(A_{\mathrm{inf}}(\mathcal{O}_{\CC_p}), \ker \theta)$ followed by taking Frobenius fixed points.
\end{remark}
In order to describe the cohomology of $(\Lambda,F)$-crystals on $(\mathcal{O}_K)_{\Prism^\circ}$ in terms of their values on the Čech nerve of 
$(A^\bullet, \ker\theta)$, we make use of the following general fact.
\begin{lemma}\label{cohomology-on-site-cech-complex}
 Let $\mathcal{C}$ be a site admitting finite non-empty products and a weakly final object $X\in \mathcal{C}$ with Čech nerve $X^\bullet$. 
 For any abelian sheaf $\mathcal{F} \in \mathrm{Ab}(C)$, with vanishing higher cohomology on $X^{(n)}$ for all $n\ge 1$, there is a quasi-isomorphism\[
    R\Gamma(\mathcal{C},\mathcal{F})\cong R\lim_\bullet \mathcal{F}(X^\bullet).
 \]
\end{lemma}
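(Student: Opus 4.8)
The plan is to reinterpret the statement inside the topos $\mathrm{Shv}(\mathcal{C})$ and recognize the asserted formula as the Čech-to-derived-functor comparison for the cover of the final object provided by $X$, collapsed by the cohomology-vanishing hypothesis. First I would record that weak finality of $X$ means precisely that $\mathrm{Hom}_{\mathcal{C}}(Y,X)\neq\emptyset$ for every $Y\in\mathcal{C}$, so that the map of presheaves $h_X\to\ast$ (with $\ast$ the terminal presheaf) is an epimorphism; sheafifying, $h_X^{\#}\to\ast$ is an epimorphism of sheaves onto the final object of $\mathrm{Shv}(\mathcal{C})$. Since the Yoneda-type functor $Y\mapsto h_Y^{\#}$ carries the finite products $X^{(n)}=X\times\dots\times X$ of $\mathcal{C}$ (which exist by hypothesis) to the corresponding products in the topos, and products in a topos are fibre products over the final object, the Čech nerve $X^{\bullet}$ of $X$ becomes, after applying $h_{(-)}^{\#}$, the Čech nerve of the epimorphism $h_X^{\#}\to\ast$.

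The only substantive input is the classical fact that in any topos an epimorphism onto the final object is of cohomological descent: applying the free abelian sheaf functor $\mathbb{Z}[-]$ to the augmented simplicial sheaf $h_{X^{(\bullet)}}^{\#}\to\ast$ yields an augmented chain complex of abelian sheaves $\cdots\to\mathbb{Z}[h_{X^{(2)}}^{\#}]\to\mathbb{Z}[h_{X^{(1)}}^{\#}]\to\mathbb{Z}\to 0$ which is exact, i.e. a resolution of the constant sheaf $\mathbb{Z}$. (An epimorphism of sheaves is locally split, a split epimorphism admits an explicit contracting homotopy on its Čech complex, and exactness of a complex of abelian sheaves is checked locally; alternatively one cites effectivity of epimorphisms in a topos.) Applying $R\mathrm{Hom}_{\mathrm{Shv}(\mathcal{C})}(-,\mathcal{F})$ to this resolution, together with the identification $R\mathrm{Hom}_{\mathrm{Shv}(\mathcal{C})}(\mathbb{Z}[h_{X^{(n)}}^{\#}],\mathcal{F})\cong R\Gamma(X^{(n)},\mathcal{F})$, produces a quasi-isomorphism
$$R\Gamma(\mathcal{C},\mathcal{F})\ \cong\ R\lim_{\bullet}R\Gamma(X^{\bullet},\mathcal{F}),$$
the totalization of the cosimplicial object $R\Gamma(X^{\bullet},\mathcal{F})$. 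Concretely, picking an injective resolution $\mathcal{F}\to\mathcal{I}^{\bullet}$ and forming the double complex $\mathcal{I}^{q}(X^{(p)})$, the exactness above says the augmented rows $\Gamma(\mathcal{C},\mathcal{I}^{q})\to\mathcal{I}^{q}(X^{(\bullet)})$ are exact, so the total complex computes $R\Gamma(\mathcal{C},\mathcal{F})$, while the columns compute $R\Gamma(X^{(p)},\mathcal{F})$.

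It then remains to feed in the hypothesis: since $\mathcal{F}$ has vanishing higher cohomology on each $X^{(n)}$, the canonical map $\mathcal{F}(X^{(n)})\to R\Gamma(X^{(n)},\mathcal{F})$ is a quasi-isomorphism for all $n$, whence $R\lim_{\bullet}R\Gamma(X^{\bullet},\mathcal{F})\cong R\lim_{\bullet}\mathcal{F}(X^{\bullet})$ and the lemma follows. The main (indeed only) obstacle is the descent input of the second paragraph, which is however standard topos theory; everything else is routine Čech-to-derived-functor bookkeeping, so I expect the write-up to be short. In the intended application $\mathcal{C}$ is (a subsite of) the absolute prismatic site, $X$ is the perfect prism covering the final object from Lemma \ref{cover-final-object}, and the vanishing hypothesis traces back to Proposition \ref{hEl-sheaf}.
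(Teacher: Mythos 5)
Your argument is correct and is exactly the standard Čech-to-derived-functor (cohomological descent) argument: the weakly final object gives an epimorphism onto the final object of the topos, its Čech nerve yields a resolution of $\mathbb{Z}$, and the acyclicity hypothesis collapses $R\Gamma(X^{(n)},\mathcal{F})$ to $\mathcal{F}(X^{(n)})$. The paper offers no proof of its own, merely citing the footnote in Bhatt--Scholze, and the content of that citation is precisely the argument you wrote out, so your route coincides with the intended one.
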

\begin{proof}
See \cite[Footnote 10, p. 42]{bhatt2022prisms}.
\end{proof}
\begin{lemma}
    Let $\mathcal{M}\in  \mathrm{Vect}((\mathcal{O}_K)_{\Prism^\circ},\hEL)$ and $(A,I)\in (\mathcal{O}_K)_{\Prism^\circ}$. 
    Then $H^q((A,I),\mathcal{M})=0$ for all $q>0$.
\end{lemma}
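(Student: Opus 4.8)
The plan is to reduce the statement to the vanishing of the higher cohomology of the structure sheaf $\hEL$ itself, which is already available from Proposition \ref{hEl-sheaf}, by exhibiting $\mathcal{M}$ (restricted to the localized site over $(A,I)$) as a direct summand of a finite free $\hEL$-module.

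First I would unwind the definitions. By Proposition \ref{vector-bundle-on-hEL}, a vector bundle on $(\fX_{\Prism^\circ},\hEL)$ is the same datum as a crystal of finite projective modules: to give $\mathcal{M}$ is to give, for each transversal prism $(B,J)$, a finite projective $\hEL(B,J)$-module $\mathcal{M}(B,J)$, together with compatible base-change isomorphisms $\mathcal{M}(B,J)\otimes_{\hEL(B,J)}\hEL(B',J')\cong\mathcal{M}(B',J')$ along all morphisms $(B,J)\to(B',J')$ in $\fX_{\Prism^\circ}$. In particular $M:=\mathcal{M}(A,I)$ is a finite projective $\hEL(A,I)$-module, and the restriction of $\mathcal{M}$ to the localized site $(\fX_{\Prism^\circ})_{/(A,I)}$, whose final object is $(A,I)$, is precisely the crystal obtained by base change from $M$. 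Next I would choose a finite projective $\hEL(A,I)$-module $N$ with a splitting $M\oplus N\cong \hEL(A,I)^{\oplus r}$; since base change is additive and commutes with finite direct sums, this splitting propagates to a direct-sum decomposition of abelian sheaves $\mathcal{M}|_{(A,I)}\oplus\mathcal{N}\cong\hEL^{\oplus r}$ on the localized site, where $\mathcal{N}$ is the crystal attached to $N$. The essential point here is that vector bundles on this ringed site are genuinely crystals — this is the content of Proposition \ref{vector-bundle-on-hEL} — and I expect it to be the only place that requires a little care; everything else is formal.

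Finally, $H^q((A,I),\mathcal{M})$ is by definition the cohomology of the sheaf $\mathcal{M}|_{(A,I)}$ on $(\fX_{\Prism^\circ})_{/(A,I)}$. Sheaf cohomology commutes with finite direct sums, so $H^q((A,I),\mathcal{M})$ is a direct summand of $H^q((A,I),\hEL^{\oplus r})=H^q((A,I),\hEL)^{\oplus r}$. Since $(A,I)$ is transversal it lies in $\fX_{\Prism^\circ}$, so Proposition \ref{hEl-sheaf} gives $H^q((A,I),\hEL)=0$ for $q>0$; a direct summand of the zero group is zero, hence $H^q((A,I),\mathcal{M})=0$ for all $q>0$. (One could alternatively compute $H^\bullet((A,I),\mathcal{M})$ via the Čech complex of a faithfully flat cover of $(A,I)$ by transversal prisms on which $\mathcal{M}$ trivializes, invoking Proposition \ref{hEl-sheaf} and Lemma \ref{presheaf-compatible-with-fiber-product} on the trivializing pieces; but the direct-summand argument avoids having to produce such a cover.)
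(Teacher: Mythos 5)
Your proof is correct and takes essentially the same route as the paper: both identify $\mathcal{M}|_{(A,I)}$ via Proposition \ref{vector-bundle-on-hEL} with the crystal obtained by base change from the finite projective module $\mathcal{M}(A,I)$ and then reduce to the vanishing of higher cohomology of $\hEL$ from Proposition \ref{hEl-sheaf}. The only cosmetic difference is that the paper expresses the reduction as $R\Gamma((A,I),\mathcal{M})\cong R\Gamma((A,I),\hEL)\otimes^{\mathbb{L}}_{\hEL(A,I)}\mathcal{M}(A,I)$, whose justification is precisely your explicit direct-summand argument.
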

\begin{proof}
    By Proposition \ref{vector-bundle-on-hEL} we have $\mathcal{M}_{\mid(A,I)}=\mathcal{M}(A,I)\otimes_{\hEL(A,I)}\hEL$ with 
    $\mathcal{M}(A,I)$ being finite projective over $\hEL(A,I)$. The result then follows from\[
    R\Gamma((A,I),\mathcal{M})\cong R\Gamma((A,I),\hEL)\otimes^{\mathbb{L}}_{\hEL(A,I)}\mathcal{M}(A,I)    
    \]
    and Proposition \ref{hEl-sheaf}.
\end{proof}
\begin{theorem}
Let $\mathcal{M}\in \mathrm{Vect}((\mathcal{O}_K)_{\Prism^\circ},\hEL)^{\varphi=1}$, $V=T^\Lambda(\mathcal{M})$ and 
$M:=\mathcal{M}(\mathbf{A}_K^+, (\varphi^n([p]_q)))$ the corresponding $(\varphi,\Gamma)$-module over $\mathbf{A}_{K,\Lambda}$.
There are quasi-isomorphisms
\[
   C(\Gamma^\bullet,M)^{\varphi=1} \cong R\Gamma((\mathcal{O}_K)_{\Prism^\circ},\mathcal{M})^{\varphi=1} \cong  C(G_K^{\bullet},V).
\]
\end{theorem}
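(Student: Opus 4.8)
The plan is to prove the two quasi-isomorphisms separately, in parallel with the two equivalences of Theorem~\ref{main-thm2} but keeping track of cohomology. For the left-hand isomorphism I would compute $R\Gamma((\mathcal{O}_K)_{\Prism^\circ},\mathcal{M})$ by Čech descent along the cover $(A_{\mathrm{inf}}(\mathcal{O}_{\hat{K}_\infty}),\ker\theta)$ of the final object of $\mathrm{Shv}((\mathcal{O}_K)_{\Prism^\circ})$ (Lemma~\ref{cover-final-object}). First note that the self-products of this object exist in $(\mathcal{O}_K)_{\Prism^\circ}$: they coincide with the self-products taken in $(\mathcal{O}_K)_{\Prism}^{\mathrm{perf}}$, which by Theorem~\ref{perfectprism-perfectoidrings} are the perfect prisms attached to the self-products of the perfectoid ring $\mathcal{O}_{\hat{K}_\infty}$ --- reduced perfectoid rings in which $p$ is a nonzerodivisor, hence $p$-torsion free, hence transversal. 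Writing $(A^\bullet,\ker\theta)$ for the resulting Čech nerve and using that $\mathcal{M}$ has vanishing higher cohomology on each $A^n$ (Proposition~\ref{hEl-sheaf} together with the crystal structure of Proposition~\ref{vector-bundle-on-hEL}), Lemma~\ref{cohomology-on-site-cech-complex} gives $R\Gamma((\mathcal{O}_K)_{\Prism^\circ},\mathcal{M})\cong R\lim_\bullet \mathcal{M}(A^\bullet,\ker\theta)$. Since $\mathcal{M}$ is a crystal, $\mathcal{M}(A^\bullet,\ker\theta)=\hEL(A^\bullet,\ker\theta)\otimes_{\tilde{\mathbf{A}}_{K,\Lambda}}\tilde M$ with $\tilde M:=\mathcal{M}(A_{\mathrm{inf}}(\mathcal{O}_{\hat{K}_\infty}),\ker\theta)$; by Lemma~\ref{cech-nerve-ainf}, finite projectivity of $\tilde M$ and compactness of $\Gamma$ this is, $\varphi$-equivariantly, the cosimplicial abelian group $n\mapsto C(\Gamma^n,\tilde M)$ of continuous cochains. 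As an $R\lim$ over $\Delta$ of a cosimplicial abelian group is computed by its associated cochain complex, we obtain $R\Gamma((\mathcal{O}_K)_{\Prism^\circ},\mathcal{M})\cong C(\Gamma^\bullet,\tilde M)$ as complexes with $\varphi$-action, and taking fibres of $\varphi-1$,
\[
    R\Gamma((\mathcal{O}_K)_{\Prism^\circ},\mathcal{M})^{\varphi=1}\cong C(\Gamma^\bullet,\tilde M)^{\varphi=1}.
\]

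It then remains to descend to $\mathbf{A}_{K,\Lambda}$, i.e.\ to show that the base-change map $C(\Gamma^\bullet,M)^{\varphi=1}\to C(\Gamma^\bullet,\tilde M)^{\varphi=1}$ induced by $\tilde M=\tilde{\mathbf{A}}_{K,\Lambda}\otimes_{\mathbf{A}_{K,\Lambda}}M$ (Proposition~\ref{phi-gamma-perf-equiv}) is a quasi-isomorphism; equivalently, that $\varphi-1$ acts invertibly on $(\tilde{\mathbf{A}}_{K,\Lambda}/\mathbf{A}_{K,\Lambda})\otimes_{\mathbf{A}_{K,\Lambda}}M$. This decompletion --- upgrading the equivalence of categories of Proposition~\ref{phi-gamma-perf-equiv} to a statement about Herr complexes --- is the step I expect to be the genuine obstacle. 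By $\mathfrak m$-adic completeness and dévissage along the powers of $\mathfrak m$ (as in the proof of Lemma~\ref{perfection-same-phi-modules}) one reduces to $\Lambda=k_\Lambda$, where $\mathbf{A}_{K,\Lambda}$ and $\tilde{\mathbf{A}}_{K,\Lambda}$ become the rings $S\otimes_{\FF_p}k_\Lambda$ and $T\otimes_{\FF_p}k_\Lambda$ of Lemma~\ref{perfection-same-phi-modules} (for $(A,I)=(\mathbf{A}_K^+,(\varphi^n([p]_q)))$). There the Artin--Schreier sequence identifies $[\bar M\xrightarrow{\varphi-1}\bar M]$ and $[\bar{\tilde M}\xrightarrow{\varphi-1}\bar{\tilde M}]$ with the étale cohomology of the $\FF_p$-sheaf associated to the respective étale $\varphi$-module, and the equivalence of the étale sites of $S\otimes_{\FF_p}k_\Lambda$ and $T\otimes_{\FF_p}k_\Lambda$ established in the proof of Lemma~\ref{perfection-same-phi-modules} (cf.\ \cite[Proposition 5.4.]{marks2023prismatic}) forces these two complexes to agree; hence $\varphi-1$ is invertible on the cokernel, as required. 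Combined with the previous display this yields $C(\Gamma^\bullet,M)^{\varphi=1}\cong R\Gamma((\mathcal{O}_K)_{\Prism^\circ},\mathcal{M})^{\varphi=1}$.

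For the right-hand isomorphism I would invoke Theorem~\ref{prismatic-proetale-comparison} with $\fX=\Spf(\mathcal{O}_K)$, giving $R\Gamma((\mathcal{O}_K)_{\Prism^\circ},\mathcal{M})^{\varphi=1}\cong R\Gamma(\Spf(\mathcal{O}_K)_{\eta,\text{proét}},V)$ with $V=T^\Lambda(\mathcal{M})$. Since $\Spf(\mathcal{O}_K)_\eta=\Spa(K,\mathcal{O}_K)$ is a point, the pro-étale cohomology of the lisse $\widehat{\Lambda}$-sheaf attached to $V$ is the continuous Galois cohomology $R\Gamma_{\mathrm{cont}}(G_K,V)$, computed by the cochain complex $C(G_K^\bullet,V)$ (via the pro-étale cover $\Spa(\CC_p)\to\Spa(K)$). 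This gives $R\Gamma((\mathcal{O}_K)_{\Prism^\circ},\mathcal{M})^{\varphi=1}\cong C(G_K^\bullet,V)$, and chaining with the isomorphism of the previous paragraph proves the theorem. (Alternatively, one bypasses Theorem~\ref{prismatic-proetale-comparison} by rerunning the Čech computation of the first paragraph over $(\mathcal{O}_K)_{\Prism}^{\mathrm{perf}}$ with the cover $(A_{\mathrm{inf}}(\mathcal{O}_{\CC_p}),\ker\theta)$, for which $\hEL(A^\bullet,\ker\theta)$ is the cochain ring $C(G_K^\bullet,\tilde{\mathbf{A}}_\Lambda)$ of the Remark after Theorem~\ref{main-thm2}, together with the fact that $[\hat M\xrightarrow{\varphi-1}\hat M]$ is concentrated in degree $0$ with value $V$ for $\hat M:=\mathcal{M}(A_{\mathrm{inf}}(\mathcal{O}_{\CC_p}),\ker\theta)$ --- once more a mod-$\mathfrak m$ Artin--Schreier statement over a finite product of copies of $\CC_p^\flat$.)
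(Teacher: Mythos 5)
Your proof has the same skeleton as the paper's: the right-hand quasi-isomorphism is Theorem \ref{prismatic-proetale-comparison} (you merely make explicit the implicitly used identification of pro-étale cohomology of $\Spa(K,\mathcal{O}_K)$ with continuous $G_K$-cochains), and the left-hand one is the Čech computation along the cover of Lemma \ref{cover-final-object}, combining Lemma \ref{cohomology-on-site-cech-complex}, the crystal property from Proposition \ref{vector-bundle-on-hEL}, the acyclicity statement coming from Proposition \ref{hEl-sheaf}, and Lemma \ref{cech-nerve-ainf}, followed by descent from $\tilde{\fA}_{K,\Lambda}$ to $\fA_{K,\Lambda}$. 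Where you genuinely diverge is in this last decompletion step: the paper simply asserts that $C(\Gamma^\bullet,M)^{\varphi=1}\cong C(\Gamma^\bullet,M\otimes_{\fA_{K,\Lambda}}\tilde{\fA}_{K,\Lambda})^{\varphi=1}$ ``follows from the full faithfulness in Lemma \ref{perfection-same-phi-modules}'', whereas you reduce modulo powers of $\mathfrak m$ and show that $\varphi-1$ is bijective on the relevant cokernel via an Artin--Schreier argument together with the equivalence of étale sites underlying Lemma \ref{perfection-same-phi-modules}. Your route is arguably the more complete one: full faithfulness only controls the kernels of $\varphi-1$ (the $H^0$'s of the termwise fibres), while the asserted termwise quasi-isomorphism also needs the cokernels to agree, which is exactly what the Artin--Schreier comparison of $[\bar M\to\bar M]$ and $[\bar{\tilde M}\to\bar{\tilde M}]$ with étale cohomology provides (take care to use the $k_\Lambda$-linear, partial-Frobenius variant, as in Lemma \ref{phi-modules-equiv-etale-site}). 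Two points you elide, though no more than the paper does: first, bijectivity of $\varphi-1$ on $\tilde M/M$ must still be promoted to bijectivity on the continuous cochain groups $C(\Gamma^n,\tilde M/M)$, and the latter identified with $C(\Gamma^n,\tilde M)/C(\Gamma^n,M)$, which needs a continuity/completeness argument of the kind used in Lemma \ref{cech-nerve-ainf}; second, your parenthetical claim that the self-products of $(A_{\mathrm{inf}}(\mathcal{O}_{\hat K_\infty}),\ker\theta)$ in $(\mathcal{O}_K)_{\Prism^\circ}$ coincide with those in $(\mathcal{O}_K)_{\Prism}^{\mathrm{perf}}$ does not follow from transversality of the latter alone (membership in the subcategory does not give the universal property there); this compatibility, implicit in the paper's combined use of Lemmas \ref{cech-nerve-ainf} and \ref{cohomology-on-site-cech-complex}, is the place to invoke the corresponding results of \cite{marks2023prismatic} and \cite{wu2021galois}.
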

\begin{proof}
The second quasi-isomorphism is Theorem \ref{prismatic-proetale-comparison}. The first quasi-isomorphism follows from Lemma \ref{cech-nerve-ainf} and Lemma \ref{cohomology-on-site-cech-complex} and the fact that 
$C(\Gamma^\bullet, M)^{\varphi=1}\cong C(\Gamma^\bullet,M\otimes_{\mathbf{A}_{K,\Lambda}}\tilde{\mathbf{A}}_{K,\Lambda})^{\varphi=1}$, which follows from the full faithfulness in Lemma \ref{perfection-same-phi-modules}.
\end{proof}
\cleardoublepage
\phantomsection
\addcontentsline{toc}{section}{References}
\printbibliography

\textsc{Ruprecht-Karls-Universität Heidelberg, Mathematisches Institut, Im Neuenheimer Feld 205, D-69120 Heidelberg}

\textit{E-mail address}: \texttt{mschneider@mathi.uni-heidelberg.de}
\end{document}